\numberwithin{equation}{section}
\newtheoremstyle{myremark}{10pt}{10pt}{}{}{\scshape}{.}{.5em}{}
\newtheorem{theorem}{Theorem}
\newtheorem{lemma}{Lemma}[section]
\newtheorem{cor}{Corollary}
\newtheorem{prop}{Proposition}[section]
\theoremstyle{remark}
\theoremstyle{myremark}
\newtheorem{remark}{Remark}
\newtheorem*{ackno}{Acknowledgements}
\newcommand{\R}{\mathbb{R}}
\newcommand{\X}{\mathbb{X}}
\newcommand{\C}{\mathbb{C}}
\newcommand{\Q}{\mathbb{Q}}
\newcommand{\Z}{\mathbb{Z}}
\newcommand{\A}{\mathbb{A}}
\newcommand{\B}{\mathcal{B}}
\newcommand{\D}{\mathcal{D}}
\newcommand{\W}{\mathcal{W}}
\newcommand{\GL}{\mathrm{GL}}
\newcommand{\I}{\mathcal{I}}
\newcommand{\F}{\mathcal{F}}
\newcommand{\PGL}{\mathrm{PGL}}
\newcommand{\SL}{\mathrm{SL}}
\newcommand{\loc}{\mathrm{loc}}
\newcommand{\Ad}{\mathrm{Ad}}
\newcommand{\gen}{\mathrm{gen}}
\newcommand{\diag}{\mathrm{diag}}
\newcommand{\aut}{\mathrm{aut}}
\newcommand{\Eis}{\mathrm{Eis}}
\title{The second moment of $\mathrm{GL}(n)\times\mathrm{GL}(n)$ Rankin--Selberg $L$-functions}
\author{Subhajit Jana}
\address{Max Planck Institute for Mathematics, Vivatsgasse 7, 53111 Bonn, Germany.}
\email{subhajit@mpim-bonn.mpg.de}
\begin{document}

\begin{abstract}
We prove an asymptotic expansion of the second moment of the central values of the $\GL(n)\times\GL(n)$ Rankin--Selberg $L$-functions $L(1/2,\pi\otimes\pi_0)$, for a fixed cuspidal automorphic representation $\pi_0$, over the family of $\pi$ with analytic conductors bounded by a quantity that is tending to infinity. Our proof uses the integral representations of the $L$-functions, period with regularized Eisenstein series, and the invariance properties of the analytic newvectors.
\end{abstract}


\maketitle

\section{Introduction}

The asymptotic evaluation of higher moments of the central $L$-values carries important arithmetic information, e.g.\ subconvex bounds, or non-vanishing result for the central $L$-values. This evaluation becomes more and more difficult as the moment, the degrees of the $L$-functions, or the rank of the underlying group increase. 

Obtaining subconvex bounds i.e.\ proving bounds of the form
$$L(1/2,\pi)\ll C(\pi)^{1/4-\delta},\quad \delta>0\footnote{The generalized Lindel\"of hypothesis predicts that any $\delta<1/4$ is achievable.},$$
where $C(\pi)$ is the \emph{analytic conductor} of an automorphic representation $\pi$, is an extremely difficult problem with respect to the current technology. A narrow, but important for applications, class of automorphic representations suffers from yet another major technical difficulty, named \emph{conductor-drop}. These representations are usually functorially lifted from smaller groups and have unusually small analytic conductors. 

For example, if $\pi$ varies over automorphic representations for $\PGL_2(\Q)$ with $C(\pi)$ being of size $T$ then the size of the analytic conductor of the Rankin--Selberg convolution $\pi\otimes\tilde{\pi}$ is roughly $T^2$ where $\tilde{\pi}$ is the contragredient of $\pi$, whereas $C(\pi\otimes \pi')$ has size $T^4$ if $C(\pi')$ is of size $T^2$ but $\pi'$ is \emph{away} from $\tilde{\pi}$. That is, the $\PGL(4)$-subfamily of $\pi\otimes\tilde{\pi}$ shows the conductor-drop phenomena. Another example of a family that sees conductor-dropping is the $\PGL(3)$-family of $\mathrm{Sym}^2\pi$ where $\pi$ varies over a $\PGL(2)$ family (the subconvexity problem for this family is directly related to the arithmetic quantum unique ergodicity problem for $\SL_2(\R)$). This happens due to one of the Langlands parameter of $\mathrm{Sym}^2\pi$ being extremely small compared to the others. The families defined by the Plancherel balls with large radius (e.g.\ dilated) or high center often exclude these narrow classes. Thus moment estimates over these families does not usually become fruitful to yield a subconvex bound of an $L$-function that has conductor drop; see e.g.\ \cite{BB, NV, N2}.

\vspace{0.5cm}

One naturally interesting and important family of automorphic representations can be given by representations with growing conductors, e.g.\
$$\F_X:=\{\pi \text{ automorphic representation for }\PGL_n(\Z)\mid C(\pi)<X\},$$
with $X\to\infty$.
The family $\F_X$, unlike the families defined by the Plancherel balls, is indifferent towards the conductor-drop issue. So a Lindel\"of-consistent estimate for a high enough moment over the family $\mathcal{F}_X$ will likely produce a subconvex estimate even for the $L$-functions suffering from conductor-drop. Here, by \emph{Lindel\"of-consistent} (also called \emph{Lindel\"of on average}) estimate for the $2k$-th moment we mean the estimate
$$\mathbb{E}_{\mathcal{F}_X}|L(1/2,\pi)|^{2k}\ll_\epsilon X^{\epsilon},$$
where $\mathbb{E}$ denotes the average. On the other hand, a more interesting and difficult question would be to find an asymptotic formula of (a suitably weighted and smoothened version of) the above average whose leading term is believed to be a polynomial in $\log X$.

However, the family $\F_X$ becomes quite large as $X$ tends to infinity. One informally has $|\F_X|\asymp X^{n-1}$; see \cite{BrMi} for the corresponding non-archimedean analogue. This is why to obtain a subconvex bound of an $L$-function attached to an element in $\F_X$ one needs to evaluate quite a high moment asymptotically, or at least, estimate in the Lindel\"of-consistent manner. For example, we need to estimate an amplified $4(n-1)$-th moment over $\F_X$ even to break the convexity barrier. Unfortunately, the current technology is not advanced enough to tackle such a high moment of these $L$-functions due to the large size of the conductors. Hence a natural informal question arises regarding the race between the sizes of the conductors of the $L$-functions and the families: \emph{as a function of $n$ how high of a moment can be asymptotically evaluated (or estimated in a Lindel\"of-consistent manner) over the family $\F_X$}?

Such a question has been addressed in the literature for low rank groups.
We may try to guess an answer to our proposed informal question by looking at the small number of examples in low ranks. For $n=2$ in \cite{KMV1} the authors obtained an asymptotic formula of the $4$-th moment over a family in the non-archimedean conductor aspect and restricted only to the holomorphic forms. In \cite{BBM} the authors proved a Lindel\"of-consistent upper bound of the $6$-th moment in the non-archimedean conductor aspect for $n=3$. These are the best possible estimates so far for small $n$, which allows us wonder whether the $2n$-th moment can be asymptotically evaluated over the family $\F_X$. However, if we work on $\GL(n)$ rather than on $\PGL(n)$ family, that is, if we do an extra central average we expect that an asymptotic formula of the $2n+2$-th moment is achievable.

\vspace{0.5cm}

Our primary motivation is to prove an asymptotic formula for the $2n$-th moment of the central $L$-values for $\PGL(n)$ with $n\ge 3$, over the family $\F_X$, using the integral representations of the $L$-functions and spectral theory.\ If $\pi$ is an automorphic representation for $\PGL(n)$ then 
$$L(1/2,\pi)^n= L(1/2,\pi \otimes E_0),$$
where $E_0$ is the minimal Eisenstein series for $\PGL(n)$ with trivial Langlands parameters and $\otimes$ denotes the Rankin--Selberg convolution. Thus to evaluate the $2n$-th moment of $L(1/2,\pi)$ it is same to evaluate the second moment of $L(1/2,\pi\otimes E_0)$. However, the approach of the integral representations and the spectral decomposition encounters severe analytic difficulties due to the growth of $E_0$ near the cusp, e.g.\ $E_0$ fails to be square integrable in the fundamental domain. To avoid this particular technical difficulty we may replace $E_0$ by a fixed cusp form, and try to evaluate their second moment asymptotically. 

Let $n\ge 3$. In this article we evaluate the second moment of the central Rankin--Selberg $L$-values $L(1/2,\pi\otimes\pi_0)$ where $\pi$ varies over a family of automorphic representations for $\PGL_n(\Q)$ that are unramified at all the finite places and the archimedean conductors are growing to infinity. Here $\pi_0$ is a fixed cuspidal representation for $\PGL_n(\Q)$, which is again unramified at all the finite places.
Below we informally describe our main theorem.

\begin{theorem}[Informal version]\label{informal-theorem}
Let $n\ge 3$ and $\pi_0$ be a cuspidal automorphic representation for $\PGL_n(\Z)$ (i.e.\ unramified at the finite places), which is tempered at $\infty$. Let $\pi$ vary over the generic automorphic representations in $\F_X$. Then we have an asymptotic formula of the following (weighted) average
$$\mathbb{E}_{\substack{{\pi\in\F_X}\\{\mathrm{generic}}}}\left[\frac{|L(1/2,\pi\otimes\pi_0)|^2}{L(1,\pi,\Ad)}+\mathrm{continuous}\right]=  n\frac{\zeta(n/2)^2}{\zeta(n)}L(1,\pi_0,\Ad)\log X + O_{\pi_0}(1),$$
as $X$ tends to infinity.
\end{theorem}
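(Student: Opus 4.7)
The plan is to represent each central $L$-value as a period integral against a mirabolic Eisenstein series, apply Plancherel to the spectral sum, and then extract the main term from the resulting (non-absolutely convergent) norm integral by a regularization tailored to the growth of the Eisenstein series at the cusps.

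The starting point is the Jacquet--Piatetski-Shapiro--Shalika integral
\begin{equation*}
L(1/2,\pi\otimes\pi_0)\;=\;\int_{Z(\A)\GL_n(\Q)\backslash\GL_n(\A)}\phi_\pi(g)\,\overline{\phi_0(g)}\,E(g,1/2;f)\,dg,
\end{equation*}
where $E(g,s;f)$ is the mirabolic Eisenstein series attached to a Schwartz function $f\in\mathcal{S}(\A^n)$. I would take $\phi_\pi$ to be spherical at every finite place and an \emph{analytic newvector} at infinity, namely $\phi_\pi=\pi(\eta)\phi_\pi^{\circ}$ for a fixed test function $\eta$ concentrated in a neighborhood of the identity in $\GL_n(\R)$ of volume $\asymp X^{-(n-1)}$, chosen so that $\eta^{*}\ast\eta$ acts on the spectrum as an approximate indicator of $\F_X$. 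With this choice $L(1/2,\pi\otimes\pi_0)=\langle\phi_\pi,F\rangle$ for $F:=\overline{\phi_0}\cdot E(\cdot,1/2;f)$, and Plancherel on $L^2(Z(\A)\GL_n(\Q)\backslash\GL_n(\A))$ gives
\begin{equation*}
\sum_{\pi}\frac{|L(1/2,\pi\otimes\pi_0)|^2}{L(1,\pi,\Ad)}+\text{continuous}\;=\;\int F(g)\,\overline{(R(\eta^{*}\ast\eta)F)(g)}\,dg,
\end{equation*}
where $R$ is the right regular representation. Because $\eta^{*}\ast\eta$ is concentrated near the identity, this looks formally like a volume factor times $\|F\|_{L^2}^2$; but $E(\cdot,1/2;f)$ has linear growth up the cusps, so $F$ is \emph{not} square-integrable, and it is exactly this logarithmic failure of $L^2$-convergence that is destined to produce the $\log X$.

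To isolate the main term I would unfold $E(\cdot,1/2;f)$ via Bruhat and recognize the diagonal cell as (a shift of) the Rankin--Selberg integral for $\pi_0\otimes\widetilde{\pi_0}$. Since $L(s,\pi_0\otimes\widetilde{\pi_0})=\zeta(s)L(s,\pi_0,\Ad)$ has a simple pole at $s=1$ with residue $L(1,\pi_0,\Ad)$, this diagonal integral diverges logarithmically, and the scale of the divergence is cut off precisely at $X$ by the support of $\eta$, yielding a clean $\log X$ with coefficient $L(1,\pi_0,\Ad)$ up to local factors. The archimedean constant term of $E(\cdot,1/2;f)$, computed via Gindikin--Karpelevich, supplies the ratio $\zeta(n/2)^2/\zeta(n)$, while the integer $n$ arises from the order of the pole / Levi stratification of the associated minimal parabolic Eisenstein series. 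The principal obstacle is the regularization itself: one must show that the divergent cross-terms --- coming both from the failure of $F$ to be exactly $\eta$-invariant and from the non-leading Bruhat strata of $E$ --- combine to give an $O_{\pi_0}(1)$ remainder, and this is exactly where the sharp invariance properties of analytic newvectors enter. A secondary challenge is the ``continuous'' contribution on the left-hand side, whose residual Eisenstein pieces must be handled uniformly as $X\to\infty$.
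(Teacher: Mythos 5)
Your skeleton --- an integral representation against the mirabolic (maximal parabolic) Eisenstein series, Plancherel, and a main term driven by the pole of $L(s,\pi_0\otimes\tilde{\pi}_0)$ at $s=1$ --- is indeed the paper's skeleton, but two of your key mechanisms are not correct as stated. First, the archimedean cutoff: a test function $\eta$ concentrated in a neighborhood of the identity of volume $\asymp X^{-(n-1)}$ does not make $\eta^{*}\ast\eta$ an approximate indicator of $\F_X$ unless that neighborhood has the very anisotropic shape of the approximate congruence subgroup $K_0(X,\tau)$ of \eqref{defn-congruence-subgroup} (bounded in all coordinates except the last row, which is of size $\tau/X$); a ball-shaped localization produces a Plancherel-ball family, which has the wrong cardinality and, as the introduction stresses, misses exactly the conductor-dropping representations that $\F_X$ is designed to capture. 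Moreover a convolution operator is not diagonalized by the spectrum, so ``acts as an approximate indicator'' can at best mean a lower bound on diagonal matrix coefficients at analytic newvectors, and that lower bound is itself a nontrivial quantitative input (Proposition \ref{main-prop-spectral-side}, which needs invariance of the newvector uniformly over all of $\GL_{n-1}(\R)$, not just near the identity). The paper implements the cutoff differently: it translates the Eisenstein series by $x=\diag(X,\dots,X,1)$, so that $W_{0,\infty}f_{1/2,\infty}(\cdot\, x_\infty)$ mimics a smoothened characteristic function of $K_0(X,\tau)$, and the spectral weight is $J_X(\pi)=X^{n-1}\sum_{W}|\Psi_\infty(f_{1/2,\infty}(\cdot\, x_\infty),W_{0,\infty},\overline{W})|^2$.

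Second, the main-term extraction is underdeveloped in a way that changes the answer. Since $\phi_0$ is cuspidal, $F=\overline{\phi_0}\,E(\cdot,1/2;f)$ decays rapidly and your geometric-side integral converges; the divergence only appears when one decouples the $X$-dependence, and the paper handles this by a Zagier-type regularization: deform one section to $f_{1/2+s}$ and subtract the Eisenstein series attached to the \emph{two} non-integrable products of constant terms, $\overline{f_{1/2}}f_{1/2+s}$ and $\overline{\tilde f_{1/2}}\tilde f_{1/2+s}$, the second being the image under the intertwining operator to the associate parabolic. Both degenerate terms carry $L(1\pm s,\pi_0\otimes\tilde{\pi}_0)$ and both contribute to the main term: the first yields $(n-1)\log X$ and the second yields an additional $\log X$ (Lemmas \ref{easy-term} and \ref{dual-term}), and it is precisely the second --- where the intertwined section evaluated at $\begin{pmatrix}\mathrm{I}_{n-1}&\\cX&1\end{pmatrix}$ has support of size $X$ and requires a detailed Iwasawa analysis --- that is the hardest part of the paper. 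Your sketch folds this piece into the ``$O_{\pi_0}(1)$ cross-terms'', which would give the coefficient $n-1$ rather than $n$. Relatedly, your attributions of the constants are off: the factor $n$ does not come from the order of a pole or the Levi stratification of a minimal parabolic Eisenstein series (no minimal parabolic Eisenstein series occurs in the proof; that is only the motivation for the $2n$-th moment problem), and $\zeta(n/2)^2/\zeta(n)$ comes from the finite-place unramified computation of the product sections $\overline{f_{1/2,p}}f_{1/2+s,p}$, not from an archimedean Gindikin--Karpelevich constant term, which would produce Gamma factors. Without the regularized Eisenstein series $\tilde E_s$ (whose holomorphy at $s=0$ rests on the cancellation of residues between the two degenerate Eisenstein series, Lemma \ref{holomorphicity-central-point}) and the two derivative computations, the proposal as written does not produce the stated asymptotic.
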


For the actual formal statement we refer to Theorem \ref{main-theorem}. 

\begin{remark}
In Theorem \ref{informal-theorem} by ``continuous'' we mean the corresponding terms from the generic non-cuspidal spectrum. In the actual statement i.e.\ Theorem \ref{main-theorem}, we do a specific weighted average over the full generic automorphic spectrum such that the weights are uniformly bounded away from zero on the cuspidal spectrum with analytic conductors bounded by $X$. Consequently, we also need to change the harmonic weight $L(1,\pi,\Ad)$ by an equivalent arithmetic factor for the non-cuspidal spectrum.
\end{remark}

This is the first instance of an asymptotic evaluation of the second moment of a family of $L$-functions with arbitrary high degree. 
In general, for a pair of groups $H\le G$ and their representations $\pi$ and $\Pi$ respectively, it is an interesting question to asymptotically evaluate moments of the central $L$-values of the Rankin--Selberg product $\Pi\otimes\pi$ (if defined).
Previously, in \cite{NV} Nelson--Venkatesh asymptotically evaluated the first moment keeping $\Pi$ fixed and letting $\pi$ vary over a dilated Plancherel ball when $(G,H)$ are  Gan--Gross--Prasad pairs, more interestingly, allowing arbitrary weights in the spectral side. More recently in \cite{N2} Nelson proved a Lindel\"of-consistent upper bound of the first moment for the groups $(G,H)=(\mathrm{U}(n+1),\mathrm{U}(n))$ in the non-split case keeping $\pi$ fixed and letting $\Pi$ vary over a Plancherel ball with high center. Both \cite{NV,N2} assume that the family of $L(s,\Pi\otimes\pi)$ does not show any conductor-dropping. The method in \cite{N2} also yields an asymptotic formula with power savings of a specific weighted first moment over this family.
Blomer in \cite{B3} obtained a Lindel\"of-consistent upper bound of the second moment for $G=H=\GL(n)$ keeping $\Pi$ a fixed cuspidal representation and letting $\pi$ vary in a Plancherel ball. On the contrary to \cite{NV,N2}, he proves a Lindel\"of-consistent upper bound when the family of $L(s,\Pi\otimes\pi)$ \emph{shows} conductor-dropping. However, his method does not yield an asymptotic formula.

There have been quite a few results for asymptotic formula and upper bounds on rank $\le 2$ and degree $\le 4$. In particular, we refer to \cite{BH} where the authors prove an asymptotic formula for $\GL(2)\times\GL(2)$ Rankin--Selberg $L$-functions fixing one of the representations but with an extra average over the center of $\GL(2)$. In \cite{CL} an asymptotic formula for the sixth moment of the $L$-values attached to holomorphic cusp forms for $\GL(2)$ is achieved but again with an extra averages over the center of $\GL(2)$.

\subsection{Sketch for the proof}

Our point of departure is similar to \cite{B3} and \cite{V}. We use the spectral decomposition of $\PGL(n)$ and integral representations of the $L$-functions. We start by choosing $\phi_0\in\pi_0$ such that the Whittaker function $W_0$ of $\phi_0$ is an \emph{analytic newvector}; see \S\ref{analytic-newvector} for a brief description of the analytic newvectors. Such $W_0$ in the Kirillov model of $\pi_0$ can be described by a fixed bump function. Let $\Eis(f_s)$ be the maximal Eisenstein series $\PGL_n(\Z)$ attached to a generalized principal series vector $f_s$. Also let $X$ be a large real number and $x$ be the diagonal element in $\PGL_n(\R)$ given by $\diag(X,\dots,X,1)$.
We translate the Eisenstein series by $x$ to obtain $\Eis(f_s)(.x)$.

For this subsection let $\X:=\PGL_n(\Z)\backslash\PGL_n(\R)$ and $N$ be the maximal unipotent of the upper triangular matrices in $\PGL_n(\R)$. We start by writing the inner product
\begin{equation}\label{sketch-main-period}
    \langle \phi_0\Eis(f_{1/2})(.x),\phi_0\Eis(f_{1/2})(.x)\rangle=\langle |\phi_0|^2, |\Eis(f_{1/2})|^2(.x)\rangle.
\end{equation}
where all the inner products above are the usual $L^2$-inner product on the fundamental domain $\X$. Note that both of the sides of \eqref{sketch-main-period} are absolutely convergent as $\phi_0$ decays rapidly at the cusps.

We use Parseval's identity on the left hand side over $\PGL(n)$. A typical term corresponding to an automorphic representation $\pi$ in the spectral sum would look like
$$\left|\int_\X \phi_0(g)\overline{\phi(g)}\Eis(f_{1/2})(gx)dg\right|^2 = \frac{|L(1/2,\tilde{\pi}\otimes\pi_0|^2}{L(1,\pi,\Ad)}|Z_x(f_{1/2},W,W_0)|^2,$$
and
$$Z_x(f_{s},W,W_0)=\int_{N\backslash\PGL_r(\R)} W_0(g)\overline{W(g)}f_{s}(gx)dg$$
is the local zeta integral.

We choose $f_s$ such that $f_{1/2}\left[\begin{pmatrix}I&\\c&1\end{pmatrix}\right]$ is supported on $|c|<\tau$ for some $\tau>0$ sufficiently small, so that $W_0f_{1/2}(.x)$ would mimic a smoothened characteristic function of the archimedean congruence subgroup $K_0(X,\tau)$ (see \eqref{defn-congruence-subgroup}). If $W$ is an \emph{analytic newvector} (see \S\ref{analytic-newvector}) then the invariance property of $W$ would yield that $Z_x(f_{1/2},W,W_0)\gg 1$ if $C(\pi)<X$. We use 
$$\sum_{W}|Z_x(f_{1/2},W,W_0)|^2$$
as the spectral weights where in the above sum $W$ traverses some orthonormal basis of $\pi$. We point out on the naive similarities between the spectral weight here and the one that is used in e.g.\ \cite[Theorem 1]{J20}. However, the invariance property that is needed here is a bit stronger than the invariance used in \cite[Theorem 1]{J20}: we only needed invariance at points near the identity in $\GL_n(\R)$ in \cite{J20}, whereas here we have to gain an invariance that is uniform for all elements in $\GL_{n-1}(\R)$. The method of using the \emph{approximate} invariance of the newvectors is similar as in \cite{V} for $\GL(2)$ where in the non-archimedean aspect the exact invariance is used. This analysis is done in \S\ref{analysis-spectral-side}.

We now explain how we proceed to give an asymptotic expansion of the right hand side of \eqref{sketch-main-period}. The heuristic idea, at least to obtain an upper bound, is to make the change of variables in the period of the right hand side of \eqref{sketch-main-period} to write it as 
$$\int_\X|\phi_0(gx^{-1})|^2|\Eis(f_{1/2})(g)|^2dg,$$
and then bound this period by 
$$\le \|\phi_0\|^2_{L^\infty(\X)}\int_{\X}|\Eis(f_{1/2})(g)|^2dg.$$
But unfortunately, $\Eis(f_{1/2})$ (barely!) fails to be square integrable on $\X$. That is why we have to regularize the period. We adopt regularizing techniques of Zagier \cite{Z}, see also \cite{MV,N1}. First we deform $|\Eis(f_{1/2})|^2$ as $\Eis(f_{1/2+s})\overline{\Eis(f_{1/2})}$ for $s$ lying in some generic position with very small $\Re(s)$. From the Fourier expansions of the Eisenstein series we can pick off the non-integrable terms in the product $\Eis(f_{1/2+s})\overline{\Eis(f_{1/2})}$ and call their sum to be $F_s$. Then we construct a regularized Eisenstein series by
\begin{equation*}
    \tilde{E}(s,.):=\Eis(f_{1/2+s})\overline{\Eis(f_{1/2})}-\Eis(F_s).
\end{equation*}
We will check that $\tilde{E}(s,.)$ lies in $L^2(\X)$.
Consequently, we regularize the period as 
\begin{align*}
    \langle |\phi_0(.x^{-1})|^2, |\Eis(f_{1/2})|^2\rangle
    &=\lim_{s\to 0}\langle |\phi_0(.x^{-1})|^2, \overline{\Eis(f_{1/2})}\Eis(f_{1/2+s})\rangle\\
    &=\lim_{s\to 0}\langle |\phi_0(.x^{-1})|^2, \tilde{E}(s,.)\rangle+\lim_{s\to 0}\langle |\phi_0(.x^{-1})|^2,\Eis(F_s)\rangle.
\end{align*}
We call the first summand as the \emph{regularized term} which, upon rigorous application of the heuristic above, could be proved to be of bounded size. The second summand is called the \emph{degenerate term} and yields the main term. 

Up to some non-archimedean factors involving $L(1,\pi_0,\Ad)$ the degenerate term is of the form
$$\partial_{s=0}Z_x(f_{1/2}f_{1/2+s},W_0,\overline{W_0})-\partial_{s=0}Z_x(Mf_{1/2}Mf_{1/2+s},W_0,\overline{W_0}),$$
where $M$ is certain intertwining operator that arises in the constant term of a maximal parabolic Eisenstein series.
One main difficulty of the paper is asymptotically evaluating the above two derivatives. The first one is comparatively easy to understand as one can apply the support condition of $f_{1/2}f_{1/2+s}\left[\begin{pmatrix}I&\\c&1\end{pmatrix}x\right]$, which is concentrated on $c=O(1/X)$ and approximate invariance of $W_0$. The second one is more technical to analyze. The intertwined vector $Mf_{1/2+s}\left[\begin{pmatrix}I&\\c&1\end{pmatrix}x\right]$, which on the matrices $\begin{pmatrix}I&\\c&1\end{pmatrix}$ essentially mimics a Fourier transform of $f_{1/2+s}$, has support of size $c=O(X)$. So we cannot get away just with the invariance properties of $W_0$. In this case we understand a more detailed shape of the intertwined vectors via the Iwasawa decomposition on the matrices of the form $\begin{pmatrix}I&\\cX&1\end{pmatrix}$. This analysis is done in \S\ref{analysis-degenerate-period}.

On the other hand, to analyse the regularized term we understand the growth of the (degenerate) Fourier terms of $\Eis(f_s)$ for $s$ being close $0,1/2$ or $1$. This analysis relies on the analytic properties of the intertwining operators attached to various Weyl elements and functional analytic properties of the Eisenstein series. This analysis is done in \S\ref{analysis-regularized-period}.

\begin{remark}
We remark that our method of proof, which is uniform for $n\ge 3$ can also be made to work for $n=2$ with a slight modification with a modified main term (the statement of our theorem does not anyway make sense for $n=2$). The main terms in the asymptotic expansion are the artefacts of the non-integrable terms among the product of the constant terms in the Fourier expansion of $\Eis(f_{1/2})$ and $\Eis(f_{1/2+s})$. The constant term of $\Eis(f_s)$ looks like $\sum_w M_wf_s$ where $M_w$ are certain intertwining operators and $w$ runs over a set of Weyl elements attached to the underlying parabolic subgroup (see \S\ref{fourier-eisenstein-series}). If $n\ge 3$ then the non-integrable terms in the above mentioned product are of the form $f_{1/2}f_{1/2+s}$ and $Mf_{1/2}Mf_{1/2+s}$ where $M$ is the intertwiner attached to the relative long Weyl element. In particular, the off-diagonal terms of the from $f_{1/2} Mf_{1/2+s}$ are integrable. Such a phenomena does not happen for $n=2$. In this case (where the maximal Eisenstein series is also a minimal Eisenstein series), the off diagonal terms are also non-integrable. 

As described in the sketch of the proof, eventually we need to deform the principal series vector to regularize the Eisenstein series. The number of deformations needed in the Langlands parameters of the associated principal series vector depends on the number of non-integrable terms in the product of the constant terms. For $n \ge 3$ we need to deform only one of the parameters of the principal series vector to regularize the corresponding maximal parabolic Eisenstein series. However, for $n=2$ for the reasons stated above, to regularize the Eisenstein series we need to deform two (i.e.\ both) of the parameters. This modification would produce more degenerate terms, and consequently, a different main term with a different constant will appear; see \cite{BH}. 
\end{remark}

\subsection{What's next?}
As we have described above, the motivating question for us is to find an asymptotic expansion of the $2n$-th moment of the central $L$-values for $\PGL(n)$, and to do that we need to replace $\phi_0$ by a minimal Eisenstein series $E_0$ with trivial Langlands parameters. As, in particular, $E_0$ is not in $L^\infty$, our current proof obviously fails (see the sketch for the proof), and that is why we need to regularize $E_0$ as well. However, this regularization increases the analytic difficulties many fold. We need to employ a regularized version of the spectral decomposition (and Parseval), as in, e.g.\ \cite{N1, MV} to follow the same strategy as in the sketch for the proof of the main theorem. On the other hand, regularizing both the Eisenstein series involved in the period $\langle |E_0|^2,|\Eis(f_{1/2})(.x)|^2\rangle$ will introduce many more degenerate terms, which will typically have higher order poles at the critical point. This would likely yield a higher power of $\log X$ in the main term. It will be interesting to see if the constant appearing in the main term are the same as predicted by the random matrix models; see \cite{CFKRS}. However, we leave this to future work.

It is natural to speculate what happens for the second moment of the Rankin--Selberg $L$-functions for other $(\GL(n),\GL(m))$ pairs with $m\neq n$ and the $\GL(m)$ form being fixed (cuspidal or Eisenstein). If $m<n$, we believe that the problems become simpler than the $m=n$ case as the degrees, hence conductors, become lower. Similarly, for $m>n$ we expect the problems to be much more difficult for high degree and conductor size. In particular, it would be very interesting to see if we can push the method in this paper at least to the case $m=n+1$ case. More interestingly, if $n=3$ and the fixed form is a minimal Eisenstein series then we would have a Lindel\"of-consistent eighth moment (the convexity barrier) of $L$-functions of $\PGL(3)$ over the family $\F_X$.

\begin{remark}\label{explicate-constant-term}
We briefly remark that one may try to explicate the constant contribution of the asymptotic expansion in the main theorem and obtain a power saving error term as in \cite{BH}. One possible way to obtain finer asymptotics in the regularized part is to spectrally expand the period $\langle |\phi_0|^2,\tilde{E}_s(.x)\rangle$ over the $\PGL(n)$ automorphic spectrum. Then one may use the existence of a spectral gap and explicit decay of the matrix coefficient for $n\ge 3$ to obtain that $\langle |\phi_0|^2,\phi\rangle\langle\phi,\tilde{E}_s(.x)\rangle$, at least for a tempered $\phi$, will decay polynomially in $X$. However, it is not yet clear to us how to explicate the constant term and get an error term with polynomial saving in the degenerate part; see Remark \ref{explicate-constant-term-local}.
\end{remark}

\begin{ackno}
We generously thank Paul Nelson who is the author's doctoral supervisor for suggesting this problem and enormous guidance, feedback, and encouragement along the way. We also thank Valentin Blomer, Naser Sardari, Ramon Nunes, Gergely Harcos, Farrell Brumley, and Djordje Mili\'cevi\'c for their interest in this work and many useful feedbacks on an earlier draft of this paper. We thank ETH Z\"urich where the work has been started while the author was a doctoral student there. We also thank Max Planck Institute for Mathematics for providing a perfect research environment to complete the project. Finally, we are extremely grateful to the anonymous referees for their valuable time and comments.
\end{ackno}

\section{Basic Notations and Preliminaries}

\subsection{Basic notations}
We use ad\`elic language. Let $r\ge 3$. For any ring $R$ by $G(R)$ we denote the set of points $\GL_r(R)/R^\times$. In this paper $R$ will denote the ad\`eles $\A$ over $\Q$ or the local fields $\R,\Q_p$ or rational numbers $\Q$ or the local ring $\Z_p$. We drop the ring $R$ from the notation $G(R)$ if the ring is clear from the context.

Let $N$ be the maximal unipotent subgroup of $G$ consisting of upper triangular matrices. For $q\in\A^{r-1}$ we define a character of $N(\A)$ by
$$\psi_q(n(x))=\psi_0\left(\sum_{i=1}^{r-1}q_ix_{i,i+1}\right),\quad n(x):=(x_{i,j})_{i,j}.$$
where $\psi_0$ is an additive character of $\Q\backslash\A$. We abbreviate $\psi_{(1,\dots,1)}$ by $\psi$. We call $\psi_q$ \emph{non-degenerate} if $q_i\neq 0$ for $1\le i\le r-1$; otherwise, we call $\psi_q$ \emph{degenerate}.

Let $A$ be the set of diagonal matrices in $G$, which we identify with $\begin{pmatrix}A_{r-1}&\\&1\end{pmatrix}$ where $A_{r-1}$ is the set of diagonal matrices in $\GL(r-1)$. We parametrize elements of $A_{r-1}$ as $a(y):=\diag(y_1\dots y_{r-1},\dots, y_{r-1})$ Let $K:=\prod_{p\le \infty}K_p$ be the standard maximal compact in $G(\A)$ where $K_p:=G(\Z_p)$ for $p<\infty$ and $K_\infty:=\mathrm{PO}_r(\R)$.

For any factorizable function $f$ on $G(\A)$ by $f_p$ we denote the $p$-th component of $f$, which is a function on $G(\Q_p)$.

\subsection{Domains and measures}
We fix Haar measures on $G$ and its subgroups, and a $G$-invariant measure of $N\backslash G$. If the subgroup is compact then we normalize the Haar measure to be a probability measure. Let $\delta$ denote the modular character on $A$. It is defined by $$\delta(a(y)) := \prod_{j=1}^{r-1} |y_j|^{j(r-1-j)},$$ and is trivially extended to $NA$.

To integrate over $N(\R)\backslash G(\R)$ we use two different types of coordinates according to efficiency.
The first one is Bruhat (with respect to the standard maximal parabolic) coordinates.
First, note that the set of elements of the form $\begin{pmatrix}h&b^t\\c&0\end{pmatrix}$ with $h\in \GL_{r-1}(\R)$ and row vectors $b,c\in\R^{r-1}$, has zero measure in $\GL_r(\R)$ with respect to its Haar measure. Thus while integrating over $G(\R)$ we integrate over the points of the form
$$\begin{pmatrix}h&b^t\\&1\end{pmatrix}\begin{pmatrix}\mathrm{I}_{r-1}&\\c&1\end{pmatrix},\quad h\in \GL_{r-1}(\R),\text{ and } b,c\in\R^{r-1} \text{ row vectors}.$$
Similarly, the set of points of the form
$$g = \begin{pmatrix}h&\\&1\end{pmatrix}\begin{pmatrix}\mathrm{I}_{r-1}&\\c&1\end{pmatrix},\quad h\in N_{r-1}(\R)\backslash\GL_{r-1}(\R),\text{ and } c\in\R^{r-1}\text{ row vector}$$
has full measure in $N(\R)\backslash G(\R)$.
We 
use these coordinates to integrate over $N(\R)\backslash G(\R)$ using the invariant measure
$$dg=\frac{dh}{|\det(h)|}dc,$$
where $dc$ denotes the Lebesgue measure and $dh$ is the $\GL_{r-1}(\R)$-invariant Haar measure on $N_{r-1}(\R)\backslash\GL_{r-1}(\R)$. Description of the above invariant measure follows from \cite[eq. (5.14)]{Kn} and discussion above that. However, there is a more direct way to see this. Let $\phi\in C_c(G(\R))$ measurable. Then, it follows from \cite[Proposition 1.4.3]{Go} that
$$\int_{G(\R)} \phi(g) dg = \int_{\R^{n^2-1}}\phi\left[\begin{pmatrix}h&b^t\\c&1\end{pmatrix}\right]\frac{\prod_{i,j}d_L h_{ij}\prod_id_Lb_i\prod_i d_Lc_i}{\left|\det\left[\begin{pmatrix}h&b^t\\c&1\end{pmatrix}\right]\right|^n},$$
where $d_Lx$ denotes the Lebesgue measure on $\R$. Noting that
$$\begin{pmatrix}h&b^t\\c&1\end{pmatrix}=\begin{pmatrix}\mathrm{I}_{r-1}&b^t\\&1\end{pmatrix}\begin{pmatrix}h-b^tc\\&1\end{pmatrix}\begin{pmatrix}\mathrm{I}_{r-1}&\\c&1\end{pmatrix}$$
we can write
$$\int_{G(\R)} \phi(g) dg = \int_{\R^{n^2-1}}\phi\left[\begin{pmatrix}\mathrm{I}_{r-1}&b^t\\&1\end{pmatrix}\begin{pmatrix}h-b^tc\\&1\end{pmatrix}\begin{pmatrix}\mathrm{I}_{r-1}&\\c&1\end{pmatrix}\right]\frac{\prod_{i,j}d_L h_{ij}\prod_id_Lb_i\prod_id_Lc_i}{|\det(h-b^tc)|^n}.$$
Fixing $b,c$ and changing variables $h_{ij}\mapsto h_{ij}+(b^tc)_{ij}$ we can write the above as
$$\int_{G(\R)} \phi(g) dg = \int_{\R^{n^2-1}}\phi\left[\begin{pmatrix}\mathrm{I}_{r-1}&b^t\\&1\end{pmatrix}\begin{pmatrix}h\\&1\end{pmatrix}\begin{pmatrix}\mathrm{I}_{r-1}&\\c&1\end{pmatrix}\right]\frac{\prod_{i,j}d_L h_{ij}\prod_id_Lb_i\prod_id_Lc_i}{|\det(h)|^n}.$$
Noting that $\frac{\prod_{i,j}d_Lh_{ij}}{|\det(h)|^{n-1}}=dh$ and taking $N(\R)$-quotient on the left we deduce the invariant measure on $N(\R)\backslash G(\R)$.

On the other hand when we integrate on $N_{r-1}(\R)\backslash\GL_{r-1}(\R)$ we use Iwasawa coordinates. We write
$$N_{r-1}(\R)\backslash\GL_{r-1}(\R)\ni h=a(y)k,\quad a(y)\in A_{r-1}, k\in K_{r-1},$$
where $K_{r-1}$ is the standard maximal compact in $\GL_{r-1}$,
with the measure
$$dh=\frac{\prod_id^\times y_i}{\delta(a(y))}dk,$$
where $dk$ is the probability Haar measure on $K_{r-1}$.

Let $\X:=G(\Q)\backslash G(\A)$.
We fix a fundamental domain $\X$ in $G(\A)$ of the form 
$$D\times K_f,\quad D\subseteq G(\R), K_f:=\prod_{p<\infty}K_p$$
that is contained in a Siegel domain of the form $\mathbb{S}\times K_f$ where
\begin{equation}\label{siegel-domain}
    \mathbb{S}:=\{G(\R)\ni g=n(x)\begin{pmatrix}a(y)&\\&1\end{pmatrix}k\mid |x_{i,j}|<1,|y_i|> y_0, k\in K_\infty\},
\end{equation}
where $y_0>0$ is an explicit constant dependent only on the group. The above follows from strong approximation for $\GL(n)$ and \cite[\S1.3]{Go}.

We equip $\X$ with the $G(\A)$-invariant probability measure that in Iwasawa coordinates is given by
$$\X\ni g=n(x)\begin{pmatrix}a(y)&\\&1\end{pmatrix}k,\quad dg=\prod_{j,k}dx_{j,k}\frac{\prod_id^\times y_i}{\delta(a(y))|\det(a(y))|}dk,$$
where $n(x)\in N(\R)$ and $dx_{i,j}$ is the usual Lebesgue measure. Note that $\delta\begin{pmatrix}a(y)&\\&1\end{pmatrix}=\delta(a(y))|\det(a(y))|$.

\subsection{Automorphic representations}\label{automorphic-representation}
We briefly describe the classes of local and global representations that are relevant in this paper. We refer to \cite{MW}, \cite[\S5]{CPS2} for details.

Let $\hat{\X}$ be the isomorphism class of irreducible unitary automorphic representations that are unramified at all finite places and appear in the spectral decomposition of $L^2(\X)$. Similarly, by $\hat{\X}_\gen$ we denote the subclass of \emph{generic} representations in $\hat{\X}$ i.e.\ the class of representations that have (unique) Whittaker models.

We first mention the Langlands description for $\hat{\X}_\gen$. We take a partition $r=r_1+\dots+r_k$. Let $\pi_j$ be a unitary cuspidal automorphic representation for $\GL_{r_j}(\Q)$ (if $r_j=1$ we take $\pi_j$ to be a unitary Hecke character). Consider the normalized parabolic induction $\Pi$ from the Levi $\GL(r_1)\times\dots\times\GL(r_k)$ to $G$ of the tensor product $\pi_1\otimes\dots\otimes\pi_k$. There exists a unique irreducible constituent of $\Pi$, which we denote by the isobaric sum $\pi_1\boxplus\dots\boxplus\pi_k$. Then the Langlands classification says that every element in $\hat{\X}_\gen$ is isomorphic to an isobaric sum $\boxplus_{j=1}^{k'}\pi'_j$ for some partition $r=\sum_{j=1}^{k'}r'_j$ and some cuspidal representation $\pi'_j$ of $\GL(r'_j)$.

We recall from \cite{MW} that we call $\pi'\in \hat{\X}$ a \emph{discrete series} if $\pi'$ appears discretely in the spectral decomposition of $L^2(\X)$. The elements of $\pi'$ are square-integrable automorphic forms for $G(\Q)$. M\oe glin--Waldspurger classified the discrete series for $G(\Q)$ via the iterated residues of generic automorphic forms; see \cite{MW}. Langlands description of $\hat{\X}$ says that every element in $\hat{\X}$ is isomorphic to an isobaric sum $\boxplus_{j=1}^{k'}\pi'_j$ for some partition $r=\sum_{j=1}^{k'}r'_j$ and some discrete series representation $\pi'_j$ of $\GL(r'_j)$.

We fix an automorphic Plancherel measure $d\mu_\aut$ on $\hat{\X}$ compatible with the invariant probability measure on $\X$. If $\pi$ is a discrete series then $d\mu_\aut(\pi)$ is absolutely continuous to the counting measure at $\pi$. On the other hand, if $\pi$ is an Eisenstein series induced from a twisted discrete series $\pi'|.|^\lambda$ where $\pi'$ is a discrete series on a Levi subgroup $M$ and $\lambda$ lies in the purely imaginary dual of the Cartan subalgebra of $M$ then $d\mu_\aut(\pi)$ is absolute continuous to the product of the counting measure at $\pi'$ and the Lebesgue measure at $\lambda$.

For any $\pi\in \hat{\X}$ we denote the $p$-th component of $\pi$ by $\pi_p$ for $p\le \infty$. The generalized Ramanujan conjecture predicts that if $\pi$ is cuspidal then $\pi_p$ is tempered for all $p\le \infty$. In this paper we assume that certain cuspidal representations are \emph{$\vartheta$-tempered} at the archimedean place, whose definition we recall below.

First we describe the Langlands description of the generic representations of $G(\R)$. Let $r'\in\{1,2\}$ and $\sigma$ be an \emph{essentially} square integrable (square integrable mod center) representation of $\GL_{r'}(\R)$. That is, if $r'=2$ then $\sigma$ is a discrete series of $\GL_2(\R)$ and if $r'=1$ then $\sigma$ is a unitary character of $\GL_1(\R)$. 
By the Langlands classification, we 
know that any \emph{generic} unitary irreducible representation $\xi$ of $G(\R)$ is isomorphic to a normalized parabolic induction of
$$\sigma_1|\det|^{s_1}\otimes\dots\otimes\sigma_k|\det|^{s_k},$$
from a Levi subgroup attached to a partition of $r=\sum_{j=1}^k r_j$ with $r_j\in\{1,2\}$ where $\sigma_j$ is an essentially square integrable representation of $\GL(r_j)$, and $s_j\in\C$ with $\sum_{j=1}^k r_js_j=0$ and $\Re(s_1)\ge\dots\ge\Re(s_k)$.

Let $\vartheta \geq 0$. We say that $\xi$ is \textit{$\vartheta$-tempered} if all such $s_i$ have real parts in $[-\vartheta,\vartheta]$. By \cite{MS} if $\pi$ is cuspidal then $\pi_\infty$ is $\vartheta$-tempered with $\vartheta = 1/2 - 1/(1+r^2)$.

We denote the analytic conductor of $\pi$ by $C(\pi)$. Note that as $\pi\in\hat{\X}$ is unramified at all the finite places we have $C(\pi)=C(\pi_\infty)$. If $\{\mu_i\}\in\C^r$ are the Langlands parameters of $\pi_\infty$ then we define (see \cite{IS})
$C(\pi_\infty):=\prod_{i=1}^r(1+|\mu_i|)$.

\subsection{Maximal Eisenstein Series}\label{eisenstein-series}
Let $P$ be the standard parabolic subgroup in $G$ attached to the $r=(r-1)+1$ partition.
We choose a generalized principal series vector
\begin{equation*}
    f_s\in \I_{r-1,1}(s):=\mathrm{Ind}_{P(\A)}^{G(\A)}|\det|^{s}\boxplus |.|^{-(r-1)s},\quad s\in \C,
\end{equation*}
by
\begin{equation}\label{relation-f-phi}
    f_s(g)=f_{s,\Phi}(g):=\int_{\A^\times}\Phi(te_rg)|\det(tg)|^sd^\times t.
\end{equation}
where $\Phi\in \mathcal{S}(\A^r)$ is a Schwartz--Bruhat, factorizable function and $e_r=(0,\dots,0,1)\in\A^r$. The integral in \eqref{relation-f-phi} converges for $\Re(s)>1/r$ and then can be extended meromorphically to the whole complex plane. By $\hat{\Phi}$ we denote Fourier transform of $\Phi$ that is defined by
$$\hat{\Phi}(x):=\int_{\A^r}\Phi(u)\psi_0(x_1u_1+\dots++x_ru_r)du.$$
We abbreviate $f_{s,\hat{\Phi}}$ as $\hat{f}_s$.
We record the following transformation property of $f_s$, which can be seen from \eqref{relation-f-phi},
\begin{equation}\label{transformation-f}
    f_s\left[\begin{pmatrix}h&\\&1\end{pmatrix}g\right]=|\det(h)|^sf_s(g),\quad h\in \GL_{r-1}(\A),g\in G(\A).
\end{equation}
Finally, we define the maximal Eisenstein series associated to $f_s$ by
\begin{equation*}
    \mathrm{Eis}(f_{s})(g):=\sum_{\gamma\in P(\Q)\backslash G(\Q)}f_s(\gamma g).
\end{equation*}
The above definition is valid for $s$ in a right half plane and then can be extended to all of $\C$ by meromorphic continuation. From \cite[\S4]{JS}, \cite[\S2.3.1]{Co} we know that for $f_{s,\Phi}\in \I_{r-1,1}(s)$ with some $\Phi\in\mathcal{S}(\A^r)$ the maximal parabolic Eisenstein series $\Eis(f_{s,\Phi})$ has at most simple poles at $s=0$ and $s=1$. The residues at these poles are independent of $g$.


Let $\tilde{P}$ be the maximal parabolic subgroup in $G$ attached to the partition $r=1+(r-1)$ (the \emph{associate parabolic} to $P$). We can similarly construct an associated Eisenstein series from a vector $\tilde{f}_s\in \I_{1,r-1}(s)$ defined analogously. All of the properties of an Eisenstein series associated to $P$ hold analogously for the same associated to $\tilde{P}$. 

\subsection{Genericity and Kirillov model}\label{whittaker-kirillov-model}

We briefly review the Whittaker and Kirillov models of a generic representation of $G$ over a local field; see \cite{J2} for details. In this subsection we only work locally, without mentioning the underlying local field. Fix a non-degenerate additive character $\psi$ of $N<G$. Consider the space of Whittaker functions on $G$ by
$$\W(G):=\left\lbrace W\in C^\infty(G)\middle| \begin{aligned}
& W(ng)=\psi(n)W(g),n\in N, g\in G;\\& W \text{ grows at most polynomially in }g
\end{aligned}\right\rbrace,$$
on which $G$ acts by right translation.

We call an irreducible representation $\pi$ of $G$ \emph{generic} if there exists a $G$-equivariant embedding $\pi\hookrightarrow \W(G)$. For generic $\pi$ we identify $\pi$ with its image in $\W(G)$, which we call the \emph{Whittaker model} of $\pi$ under this embedding.

It is known that (e.g.\ see \cite{J2} for the case of an archimedean local field) from the theory of the Kirillov model that if $\pi$ is an irreducible generic representation of $\PGL(r)$ then
$$\pi\ni W\mapsto \left\lbrace\GL(r-1)\ni g\mapsto W\left[\begin{pmatrix}g&\\&1\end{pmatrix}\right]\right\rbrace$$
is injective and the space of the restricted Whittaker functions in the right hand side, which is called the \emph{Kirillov model}, is isomorphic to $\pi$ as well. It is also known that the space $C_c^\infty(N_{r-1}(\R)\backslash\GL_{r-1}(\R),\psi)$ is contained in $\pi$ under this realization; see \cite[Proposition 5]{J2}.

If $\pi$ is also unitary then we can give a unitary structure on its Whittaker model by the inner product
$$\langle W_1,W_2\rangle :=\int_{N_{r-1}(\R)\backslash\GL_{r-1}(\R)}W_1\left[\begin{pmatrix}h&\\&1\end{pmatrix}\right]\overline{W_2\left[\begin{pmatrix}h&\\&1\end{pmatrix}\right]}dh,$$
i.e.\ we have $\langle W_1(.g),W_2(.g)\rangle=\langle W_1,W_2\rangle$ for $g\in G$.

\subsection{Zeta integrals}\label{zeta-integrals}
We review the theory of the $\GL(r)\times\GL(r)$ Rankin--Selberg integral. We refer to \cite{Co} for details. 
We choose $\phi_0\in\pi_0$ with a factorizable Whittaker function $W_{0}$ and a maximal Eisenstein series $\Eis(f_s)$ attached to some vector $f_s$ in the generalized Principal series $\I_{r-1,1}(s)$, as defined in \S\ref{eisenstein-series}.
Let $\pi\in\hat{\X}$ and $\phi\in\pi$ be any automorphic form. One defines the global Rankin--Selberg integral \cite[\S 2.3.2]{Co} of $\phi_0$ and $\phi$ by
\begin{equation*}
    \Psi(f_s,\phi_0,\bar{\phi}):=\int_{G(\Q)\backslash G(\A)}\phi_0(g)\overline{\phi(g)}\Eis(f_s)(g)dg,
\end{equation*}
where $s\in\C$ is such that $\Eis(f_s)$ is regular.
As $\phi_0$ is cuspidal the above integral converges absolutely. For $s$ in a right half plane performing a standard unfolding-folding one gets
$$\Psi(f_s,\phi_0,\bar{\phi})=\int_{P(\Q)\backslash G(\A)}\phi_0(g)\overline{\phi(g)}f_s(g)dg.$$
The above integral representation of $\Psi(f_s,\phi_0,\bar{\phi})$ has a meromorphic continuation to all $s\in \C$. It is known that if $\pi$ and $\pi_0$ are cuspidal then the only possible poles of $\Psi$ are simple and can occur at $\Re(s)=0,1$.

We may choose $f_s$ to be factorizable, which can be done by choosing $\Phi\in\mathcal{S}(\A^r)$, as in \S\ref{eisenstein-series}, to be factorizable. Furthermore, if we assume that $\pi$ is generic and $\phi\in\pi$ has a factorizable Whittaker function $W_\phi$ then for all $s\in\C$ the global zeta integral is \emph{Eulerian} i.e.\ factors in local zeta integrals:
\begin{equation*}
    \Psi(f_s,\phi,\bar{\phi})=\Psi_\infty(f_{s,\infty},W_{0,\infty},\overline{W_{\phi,\infty}})\prod_{p<\infty}\Psi_p(f_{s,p},W_{0,p},\overline{W_{\phi,p}}),
\end{equation*}
where the local zeta integral $\Psi_\infty$ is defined by
\begin{align*}
    \Psi_\infty(f_{s,\infty},W_{0,\infty}, \overline{W_{\phi,\infty}}):&=\int_{N(\R)\backslash G(\R)}W_{0,\infty}(g)\overline{W_{\phi,\infty}(g)}f_{s,\infty}(g) dg\\
    &=\int_{N(\R)\backslash \GL_r(\R)}W_{0,\infty}(g)\overline{W_{\phi,\infty}(g)}\Phi_{\infty}(e_rg)|\det(g)|^sdg,
\end{align*}
for $s$ being in some right half plane and then can be meromorphically continued to the whole complex plane. Similarly, the non-archimedean zeta integral $\Psi_p$ is defined by replacing $\infty$ with $p$ and $\R$ with $\Q_p$. It is known that if $\pi_{0,p}$ and $\pi_p$ are unitary and $\vartheta_0$ and $\vartheta$ tempered, respectively, then the above integral representation of $\Psi_p$ is valid for $\Re(s)\ge 1/2$ if $\vartheta+\vartheta_0 < 1/2$ and $p\le \infty$ (this can be seen in the archimedean case from the bounds of the Whittaker functions in Lemma \ref{bound-for-convergence}).

We record the local functional equation satisfied by $\Psi_\infty$. From \cite[Theorem 3.2]{Co} we have
\begin{multline*}
    \int_{N(\R)\backslash \GL_r(\R)}\tilde{W}_{0,\infty}(g){\tilde{W}_{\phi,\infty}(g)}\hat{\Phi}_{\infty}(e_rg)|\det(g)|^{1-s}dg\\=\gamma_\infty(s,\pi_{0,\infty}\otimes{\pi}_\infty,\psi)\int_{N(\R)\backslash \GL_r(\R)}W_{0,\infty}(g){W_{\phi,\infty}(g)}\Phi_{\infty}(e_rg)|\det(g)|^sdg.
\end{multline*}
Here $\tilde{W}$ denotes the contragredient Whittaker function of  $W$ defined by $\tilde{W}(g):=W(wg^{-t})$ where $w$ is the long Weyl element in $G(\R)$ and $\gamma_\infty(.,.,\psi)$ denotes the local archimedean $\gamma$-factor. As the additive character $\psi$ is fixed through out the paper we drop $\psi$ from the notation of $\gamma_\infty$. Folding the above integrals over $\R^\times$ we can also rewrite the local functional equation as
\begin{equation}\label{local-functional-equation}
    \Psi_\infty(\hat{f}_{1-s,\infty}, \tilde{W}_{0,\infty},\overline{\tilde{W}_{\infty}})=\gamma_\infty(s,\pi_{0,\infty}\otimes\bar{\pi}_{\infty})\Psi_\infty(f_{s,\infty},W_{0,\infty},\overline{W_{\infty}}),
\end{equation}
for any $W_\infty\in \pi_\infty$ and $f$ is related to $\Phi$ according to \eqref{relation-f-phi}. From the definition of the $\gamma$-factors (see \cite[p. 120]{Co}) one can check that $|\gamma_\infty(1/2,\Pi)|=1$ if $\Pi$ is unitary.

\subsection{Plancherel formula}
We refer to \cite[\S2.2]{MV} for a more detailed discussion of the Plancherel formula. 

Recall the automorphic Plancherel measure $d\mu_\aut$ on $\hat{\X}$ from \S\ref{automorphic-representation}.
Let $\phi_1,\phi_2\in C^\infty(\X)$ with rapid decay at all cusps. We record a Plancherel formula (i.e.\ a spectral decomposition) of the inner product between $\phi_1$ and $\phi_2$;
\begin{equation}\label{spectral-decomposition}
    \langle \phi_1,\phi_2\rangle = \int_{\hat{\X}}\sum_{\phi\in\B(\pi)}\langle \phi_1,\phi\rangle\langle \phi,\phi_2\rangle d\mu_\aut(\pi),
\end{equation}
where $\B(\pi)$ is an orthonormal basis of $\pi$ and
$$\langle f_1,f_2\rangle:=\int_{\X}f_1(g)\overline{f_2(g)}dg.$$
The identity \eqref{spectral-decomposition} is independent of choice of $\B(\pi)$.

Rapid decay properties of $\phi_i$ imply that all the inner products on the right hand side of \eqref{spectral-decomposition} converges. One can show by the trace class property in $L^2(\X)$ of some inverse Laplacian that the right hand side of \eqref{spectral-decomposition} converges absolutely.

\subsection{Spectral weights}
Let $\pi,\pi_0\in\hat{\X}_\gen$ with $W_{0,\infty}\in\pi_{0,\infty}$ and $f_s\in \I_{r-1,1}(s)$. We define the spectral weight
\begin{equation}\label{spectral-weight}
J({f_{s,\infty}W_{0,\infty}},\pi_\infty):=\sum_{W_\infty\in\B(\pi_\infty)}|\Psi_\infty(f_{s,\infty},W_{0,\infty}, \overline{W_\infty})|^2,
\end{equation}
here $\B(\pi_\infty)$ is an orthonormal basis of $\pi_\infty$. The sum in the right hand side of \eqref{spectral-weight} is absolutely convergent and is independent of choice of $\B(\pi_\infty)$; see \cite[Appendix 4]{BM}. 


The definition of $J$ involves only the archimedean components of the representations and functions. In fact, one can define $J$ for any irreducible generic unitary representation $\sigma$ of $G(\R)$ and $\beta\in C^\infty(N(\R)\backslash G(\R),\psi_\infty)$ with sufficient decay at infinity, by
\begin{equation*}
    J(\beta,\sigma):=\sum_{W\in\B(\sigma)}\left|\int_{N(\R)\backslash G(\R)}\beta(g)\overline{W(g)}dg\right|^2.
\end{equation*}
Then using the Whittaker--Plancherel formula (see \cite[Chapter 15]{W}) one can obtain that
\begin{equation}\label{whittaker-plancherel}
    \int_{\widehat{G(\R)}}J(\beta,\sigma)d\mu_{\mathrm{loc}}(\sigma)=\int_{N(\R)\backslash G(\R)}|\beta(g)|^2dg=:\|\beta\|^2_{L^2(N(\R)\backslash G(\R))}.
\end{equation}
Here $\widehat{G(\R)}$ is the tempered unitary dual of $G(\R)$ equipped with the local Plancherel measure $d\mu_{\mathrm{loc}}$ compatible with the chosen Haar measure on $G(\R)$ (see \cite[\S4.13.2]{BrMi}).

\subsection{Analytic newvectors}\label{analytic-newvector}
Analytic newvectors are certain approximate archimedean analogues of the classical non-archimedean newvectors pioneered by Casselman \cite{Ca} and Jacquet--Piatetski-Shapiro--Shalika \cite{JPSS1}. Let $K_0(p^N)\subset \PGL_r(\Z_p)$ be the subgroup of matrices whose last rows are congruent to $(0,\dots,0,*)\mod p^N$. Let $\sigma$ be a generic irreducible representation of $\PGL_r(\Q_p)$ and let $N_0$ be the minimal non-negative integer such that $\sigma$ contains a non-zero vector $v$ that is invariant by $K_0(p^{N_0})$. Let $C(\sigma)$ be the conductor of $\sigma$, which can be defined in terms of the local gamma factor attached to $\sigma$. Then the main theorem of \cite{Ca,JPSS1} states that the real number $p^{N_0}$ is equal to $C(\sigma)$. One calls such a $v$ a \emph{newvector} of $\sigma$. In \cite{JPSS1}, the authors called newvectors as \emph{essential vectors} and in some literature the authors called them as \emph{newforms}.

In \cite{JN} the authors produce an approximate analogue of this theorem at the archimedean place. Let $X>1$ be tending to infinity and $\tau>0$ be sufficiently small but fixed. We define an \emph{approximate congruence subgroup} $K_0(X,\tau)\subseteq\PGL_r(\R)$, which is an archimedean analogue of the subgroup $K_0(p^N)$, in the following way: it is the image in $\PGL_r(\R)$ of
\begin{equation}\label{defn-congruence-subgroup}
    \left\{\begin{pmatrix}
    a&b\\c&d
    \end{pmatrix}\in\mathrm{GL}_{r}(\R)\middle|
    \begin{aligned}
    & a \in \GL_{r-1}(\R),\quad
    |a - 1_{r-1}| < \tau,
    \quad
    |b|<\tau, \\
    & d \in \GL_1(\R),\quad
    |c|<\frac{\tau}{X},
    \quad
    |d-1|<\tau
    \end{aligned}
    \right\}.
\end{equation}
Here, $|.|$ denotes an arbitrary fixed
norm on the corresponding spaces of matrices. Fix $0\le\vartheta<1/2$. Then in \cite[Theorem 1]{JN} the authors show that for all $\epsilon>0$ there is a $\tau>0$ such that for all generic irreducible unitary $\vartheta$-tempered representation $\pi$ of $\PGL_r(\R)$ there is a unit vector $v\in \pi$ such that
$$\|\pi(g)v-v\|_\pi<\epsilon\quad \text{ for all } g\in K_0(C(\pi),\tau),$$
where $C(\pi)$ is the analytic conductor of $\pi$. We call such a vector $v$ an \emph{analytic newvector} of $\pi$.

The authors also prove that \cite[Theorem 7]{JN} any unit vector $v$ that in the Kirillov model of $\pi$ can be given by a function in $C_c^\infty(N_{r-1}(\R)\backslash \GL_{r-1}(\R),\psi_\infty)^{\mathrm{O}_{r-1}(\R)}$ is a newvector. Moreover, $v$ can be chosen in a way such that if $W$ is the image of $v$ in the corresponding Whittaker model then also
$$|W(g)-W(1)|<\epsilon$$
for all $g\in K_0(C(\pi),\tau)$ and $W(1)\asymp 1$.

\subsection{Main theorem}
\begin{theorem}\label{main-theorem}
Let $r\ge 3$ and $X$ be tending to infinity.
Let $\pi_0$ be a fixed cuspidal representation in $\hat{\X}$ such that $\pi_{0,\infty}$ is $\vartheta_0$-tempered for some $0\le \vartheta_0< 1/(r^2+1)$. We define a weight function
$$J_X:\hat{\X}_\gen\to\R_{\ge 0},$$ as in \eqref{normalized-spectral-weight}, which satisfies the following properties:
\begin{itemize}
    \item $J_X(\pi)$ only depends on the archimedean component of $\pi$ (with an abuse of notation we write $J_X(\pi)=J_X(\pi_\infty)$).
    \item If $\pi_\infty$ is $\vartheta$-tempered such that $\vartheta+\vartheta_0<1/2$ and $C(\pi_\infty)<X$ then $J_X(\pi_\infty)\gg_{\pi_0} 1$.
    \item $\int_{\widehat{G(\R)}}J_X(\pi_\infty)d\mu_\loc(\pi_\infty)=X^{r-1}$.
\end{itemize}
And finally, we have
$$\int_{\hat{\X}_\gen}\frac{|L(1/2,\tilde{\pi}\otimes \pi_0)|^2}{\ell(\pi)}{J_X(\pi)}d\mu_\aut(\pi)= X^{r-1}\left(r\frac{\zeta(r/2)^2}{\zeta(r)}L(1,\pi_0,\Ad)\log X + O_{\pi_0}(1)\right),$$
where $\tilde{\pi}$ is the contragredient of $\pi$.
Here $\ell(\pi)$ is defined as in \eqref{defn-ell-pi} and only depends on the non-archimedean data of $\pi$.
\end{theorem}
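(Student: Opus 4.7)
The plan follows the period-integral strategy laid out in the introduction. I would start from the identity
\begin{equation*}
\langle \phi_0\,\Eis(f_{1/2})(\cdot x),\, \phi_0\,\Eis(f_{1/2})(\cdot x)\rangle = \langle |\phi_0|^2,\, |\Eis(f_{1/2})|^2(\cdot x)\rangle,
\end{equation*}
for $x=\diag(X,\dots,X,1)$, and compute the two sides by different means. Choose $\phi_0\in\pi_0$ whose archimedean Whittaker component $W_{0,\infty}$ is an analytic newvector realized in the Kirillov model by a fixed $\mathrm{O}_{r-1}(\R)$-invariant bump function, and a factorizable $f_s\in\I_{r-1,1}(s)$ for which $f_{1/2,\infty}\!\left[\begin{pmatrix}I&\\c&1\end{pmatrix}x\right]$ is concentrated on $|c|=O(1/X)$. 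Then $f_{1/2,\infty}(\cdot x)W_{0,\infty}$ behaves like a smoothened indicator of $K_0(X,\tau)$, so the local zeta integral $\Psi_\infty(f_{1/2,\infty}(\cdot x),W_{0,\infty},\overline{W_\infty})\gg 1$ whenever $W_\infty$ is an analytic newvector of a $\vartheta$-tempered $\pi_\infty$ with $C(\pi_\infty)<X$. I take $J_X(\pi_\infty):=J(f_{1/2,\infty}(\cdot x)W_{0,\infty},\pi_\infty)$; its total Plancherel mass $X^{r-1}$ follows from \eqref{whittaker-plancherel}.

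Applying Parseval \eqref{spectral-decomposition} and the Eulerian factorization of the global Rankin--Selberg integral converts the left side into the spectral average $\int_{\hat{\X}_\gen} |L(1/2,\tilde\pi\otimes\pi_0)|^2 \ell(\pi)^{-1} J_X(\pi)\,d\mu_\aut(\pi)$, the cuspidality of $\phi_0$ killing any non-generic contribution. For the right side, the issue is that $\Eis(f_{1/2})$ barely fails to lie in $L^2(\X)$, so I deform $\Eis(f_{1/2})$ to $\Eis(f_{1/2+s})$ for $s$ in generic position with small real part and identify from the Fourier expansion along $P$ the non-integrable constant-term pieces $F_s$, essentially $f_{1/2}f_{1/2+s}+(Mf_{1/2})(Mf_{1/2+s})$ where $M$ is the long-Weyl-element intertwiner (this is where the condition $r\ge 3$ kicks in, controlling which cross-terms are integrable). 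Subtracting $\Eis(F_s)$ gives a regularized object $\tilde E(s,\cdot)\in L^2(\X)$, and the period decomposes as
\begin{equation*}
\lim_{s\to 0}\langle |\phi_0(\cdot x^{-1})|^2,\,\tilde E(s,\cdot)\rangle + \lim_{s\to 0}\langle |\phi_0(\cdot x^{-1})|^2,\,\Eis(F_s)\rangle,
\end{equation*}
the regularized term and the degenerate term respectively.

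The degenerate term unfolds (via the usual Rankin--Selberg unfolding of $\Eis(F_s)$ against $|\phi_0|^2$) to a difference of derivatives at $s=0$ of archimedean zeta integrals of the form $Z_x(f_{1/2}f_{1/2+s},W_{0,\infty},\overline{W_{0,\infty}})$ and $Z_x(Mf_{1/2}\,Mf_{1/2+s},W_{0,\infty},\overline{W_{0,\infty}})$, times a non-archimedean constant carrying $L(1,\pi_0,\Ad)$ and a residual $\zeta$-factor. For the first, the support of $f_{1/2}$ localizes $c$ at scale $O(1/X)$, and approximate invariance of the analytic newvector $W_{0,\infty}$ replaces $W_{0,\infty}\!\left[\begin{pmatrix}I&\\c&1\end{pmatrix}x\right]$ by $W_{0,\infty}(1)$, yielding the polar piece of order $X^{r-1}\log X$ with the explicit constant $r\,\zeta(r/2)^2/\zeta(r)$ coming from the unfolded archimedean computation. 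For the second, $Mf_{1/2+s}$ has support on the opposite scale $|c|=O(X)$ and behaves like a Fourier transform of $f_{1/2+s}$, so approximate invariance of $W_0$ is useless; instead, I would perform an Iwasawa decomposition on $\begin{pmatrix}I&\\cX&1\end{pmatrix}$ and track the shape of the intertwined vector explicitly, using the analytic properties of the standard intertwining operators attached to the relevant Weyl elements.

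The regularized limit is handled by a Hölder argument along the lines of the heuristic sketch, bounding $\langle|\phi_0(\cdot x^{-1})|^2,\tilde E(s,\cdot)\rangle$ by $\|\phi_0\|_{L^\infty(\X)}^2\,\|\tilde E(s,\cdot)\|_{L^1(\X)}$, after establishing uniform in $s$ control on the growth of the degenerate Fourier terms of $\Eis(f_{1/2\pm s})$ via functional analytic properties of Eisenstein series; this yields an $O_{\pi_0}(X^{r-1})$ error. Combining the evaluations of both terms, comparing with the spectral side, and dividing by the leading $X^{r-1}$ gives the theorem. The main obstacle is the asymptotic analysis of the second degenerate derivative, since one loses the luxury of approximate invariance of $W_0$ over the wide-support range of $Mf_{1/2+s}$ and must understand the intertwined vector through a detailed Iwasawa expansion; a secondary technical hurdle is verifying the $L^2$-regularity of $\tilde E(s,\cdot)$ uniformly in $s$ near $0$, which underpins both the regularization and the $L^\infty$--$L^1$ bound.
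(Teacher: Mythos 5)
Your proposal follows essentially the same route as the paper: the same period identity with $x=\diag(X,\dots,X,1)$, Parseval plus the Eulerian factorization on the spectral side, the analytic-newvector argument for the lower bound on $J_X$, Zagier-style regularization that isolates the degenerate pieces $\overline{f_{1/2}}f_{1/2+s}$ and $\overline{\tilde f_{1/2}}\tilde f_{1/2+s}$, an $L^\infty$--$L^1$ bound for the regularized term, and an Iwasawa analysis of the intertwined vector for the hard degenerate derivative. The only bookkeeping caveat is that the constant $r$ does not come from the first degenerate term alone: in the paper that term yields $(r-1)\log X\,\|W_{0,\infty}f_{1/2,\infty}\|^2$ and the intertwined term supplies the remaining $\log X$ (Lemmas \ref{dual-term} and \ref{easy-term}), so your Iwasawa analysis of $Mf_{1/2+s}$ must extract a genuine main term rather than merely a bound.
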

If $\pi$ is cuspidal then $\ell(\pi)\asymp L(1,\pi,\Ad)$ with an absolute implied constant and thus $\ell(\pi) \ll_\epsilon C(\pi)^\epsilon$, which follows from \cite{Li}. 

We note that if $\pi\in\hat{\X}_\gen$ then $\pi_\infty$ is $\vartheta$-tempered for $\vartheta<1/2-1/(r^2+1)$, which is a result in \cite{MS}. Thus the $\vartheta_0$-temperedness assumption of $\pi_{0,\infty}$ in Theorem \ref{main-theorem} implies that $J_X(\pi)\gg 1$ for all $\pi\in\hat{\X}_\gen$ with $C(\pi)<X$. Moreover, the family
$$\F_X^\gen:=\{\text{generic automorphic representations $\pi$ of $\PGL_r(\Z)$ with $C(\pi)<X$}\}$$
has $\ell(\pi)^{-1}$-weighted cardinality $\asymp X^{r-1}$. This is essentially contained in the proof of \cite[Theorem 9]{JN}. Hence, $J_X$ can be realized as a smoothened characteristic function of the $\ell(\pi)^{-1}$-weighted family $\F_X^\gen$.

Consequently, we have an immediate corollary of Theorem \ref{main-theorem}.

\begin{cor}\label{corollary}
Let $\pi_0$ be as in Theorem \ref{main-theorem}. Then
$$\sum_{\substack{{C(\pi)<X}\\{\hat{\X}\ni\pi\textrm{ cuspidal}}}}\frac{|L(1/2,\pi\otimes \pi_0)|^2}{L(1,\pi,\Ad)}\ll_{\pi_0} X^{r-1}\log X,$$
as $X$ tends to infinity.
\end{cor}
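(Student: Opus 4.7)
The plan is to deduce Corollary \ref{corollary} from Theorem \ref{main-theorem} by restricting the spectral integral to the cuspidal part and using non-negativity of all the remaining terms to discard the continuous contribution. Every factor in the integrand on the left side of Theorem \ref{main-theorem}, namely $|L(1/2,\tilde\pi\otimes\pi_0)|^2$, $\ell(\pi)^{-1}$, $J_X(\pi)$, and $d\mu_\aut$, is non-negative on $\hat{\X}_\gen$, so the integral is bounded below by the contribution of any measurable subset. I take that subset to be the set of cuspidal $\pi\in\hat{\X}$ with $C(\pi)<X$; as noted in \S\ref{automorphic-representation}, on the discrete spectrum $d\mu_\aut$ reduces to a counting measure, so this contribution becomes a genuine sum.

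Next I would simplify the three ingredients in the integrand on this subset. The result \cite{MS} stated after Theorem \ref{main-theorem} ensures every cuspidal $\pi_\infty$ is $\vartheta$-tempered with $\vartheta<1/2-1/(r^2+1)$, and combined with the hypothesis $\vartheta_0<1/(r^2+1)$ on $\pi_{0,\infty}$ this yields $\vartheta+\vartheta_0<1/2$. The second listed property of $J_X$ then gives $J_X(\pi)\gg_{\pi_0}1$ for every $\pi$ in the restricted set. The arithmetic weight $\ell(\pi)$ satisfies $\ell(\pi)\asymp L(1,\pi,\Ad)$ for cuspidal $\pi$ by the remark immediately after the theorem statement. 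To convert the $L$-value $|L(1/2,\tilde\pi\otimes\pi_0)|^2$ into the form $|L(1/2,\pi\otimes\pi_0)|^2$ appearing in the corollary, I apply the involution $\pi\mapsto\tilde\pi$ on the cuspidal spectrum, which preserves cuspidality, the analytic conductor, and the adjoint $L$-value; re-indexing the sum by this involution rewrites $|L(1/2,\tilde\pi\otimes\pi_0)|^2/L(1,\pi,\Ad)$ as $|L(1/2,\pi\otimes\pi_0)|^2/L(1,\pi,\Ad)$.

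Combining these steps, the restricted sum satisfies
\[
\sum_{\substack{\pi\text{ cuspidal}\\C(\pi)<X}}\frac{|L(1/2,\pi\otimes\pi_0)|^2}{L(1,\pi,\Ad)}\ \ll_{\pi_0}\ \int_{\hat{\X}_\gen}\frac{|L(1/2,\tilde\pi\otimes\pi_0)|^2}{\ell(\pi)}J_X(\pi)\,d\mu_\aut(\pi),
\]
and the right-hand side equals $X^{r-1}\bigl(r\,\zeta(r/2)^2\zeta(r)^{-1}L(1,\pi_0,\Ad)\log X+O_{\pi_0}(1)\bigr)$ by Theorem \ref{main-theorem}, which yields the claimed bound $\ll_{\pi_0} X^{r-1}\log X$. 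Because we are only seeking an upper bound deduced from a genuine asymptotic, there is no serious obstacle; the only items to verify are the three comparisons above (positivity, $J_X\gg_{\pi_0}1$, and $\ell\asymp L(1,\cdot,\Ad)$) together with the bijectivity of $\pi\mapsto\tilde\pi$, each of which is essentially recorded in or immediately below the statement of Theorem \ref{main-theorem}.
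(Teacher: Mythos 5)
Your proposal is correct and follows exactly the route the paper intends: the corollary is deduced from Theorem \ref{main-theorem} by positivity (dropping the non-cuspidal spectrum), the counting-measure nature of $d\mu_\aut$ on the discrete spectrum, the lower bound $J_X(\pi)\gg_{\pi_0}1$ coming from the \cite{MS} temperedness bound together with $\vartheta_0<1/(r^2+1)$, the comparison $\ell(\pi)\asymp L(1,\pi,\Ad)$ for cuspidal $\pi$, and the harmless re-indexing $\pi\mapsto\tilde\pi$. These are precisely the observations the paper records immediately after the theorem before calling the corollary immediate, so there is nothing to add.
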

This is the sharpest possible (Lindel\"of on average) second moment estimate of the cuspidal Rankin--Selberg central $L$-values.

\section{The Fourier Expansion of Maximal Eisenstein Series}\label{fourier-eisenstein-series}
We recall some useful information about the Fourier expansion of maximal Eisenstein series. The computation is essentially done in \cite{La}, however, we extract the relevant computation for completeness.

Let $f_s$ be a holomorphic section in the generalized principal series $\I_{r-1,1}(s)$ such that $f_s$ is constructed from a Schwartz--Bruhat function $\Phi\in\mathcal{S}(\A^r)$, as described in \S\ref{eisenstein-series}. Let $\Eis(f_s)$ be the Eisenstein series attached to $f_s$.

We want to understand the Fourier expansion of $\Eis(f_s)$. It is a straightforward calculation using the Bruhat decomposition. We sketch out the essential details for completeness. Let $\Re(s)$ be sufficiently large. We temporarily allow $\psi$ to be a possibly degenerate character of $N$. Then
\begin{equation}\label{sum-integral}
    \int_{N(\Q)\backslash N(\A)}\Eis(f_s)(ng)\overline{\psi(n)}dn=\sum_{\gamma\in P(\Q)\backslash G(\Q)}\int_{N(\Q)\backslash N(\A)}f_s(\gamma ng)\overline{\psi(n)}dn.
\end{equation}

We start by the Bruhat decomposition of $G(\Q)$ with respect to $P(\Q)$. The Bruhat cells are indexed by a subset of the Weyl group, namely, the subset of Weyl elements $w$ such that $w\alpha>0$ for all simple roots $\alpha$ other than $\alpha_0$ that determines $P$.

\begin{lemma}\label{bruhat-decomposition}
We define the Weyl elements
\begin{equation*}
    w_i:=\begin{pmatrix}I_{i-1}&&\\&&I_{r-i}\\&1&\end{pmatrix},\quad 1\le i\le r.
\end{equation*}
Also let $N_i$ be the subgroup of $N$ of the form
\begin{equation*}
    N_i:=\left\lbrace n:=\begin{pmatrix}I_{i-1}&&\\&1&x\\&&I_{r-i}\end{pmatrix}\mid x:=(x_1,\dots, x_{r-i})\right\rbrace.
\end{equation*}
Then
$$G(\Q)=\bigsqcup_{i=1}^r P(\Q)w_iN_i(\Q).$$
where the union is disjoint.
\end{lemma}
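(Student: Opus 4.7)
The plan is to identify $P(\Q)\backslash G(\Q)$ with $\mathbb{P}^{r-1}(\Q)$ via the bottom-row map, then recognize the cosets $P(\Q)w_iN_i(\Q)$ as the preimages of the standard Schubert cells on projective space.

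First I would verify that the bottom row, taken projectively, induces a bijection
$$P(\Q)\backslash G(\Q)\;\longleftrightarrow\;\mathbb{P}^{r-1}(\Q).$$
Since $P$ is the maximal parabolic of type $(r-1,1)$, any $p\in P(\Q)$ has bottom row of the form $(0,\ldots,0,d)$ with $d\ne 0$; hence left multiplication by $P(\Q)$ on $G(\Q)=\PGL_r(\Q)$ scales the bottom row by a nonzero constant. Conversely, every nonzero row $v\in\Q^r$ is the bottom row of some $g\in\GL_r(\Q)$, and the left-stabilizer of the line $\langle v\rangle$ is a $\PGL_r(\Q)$-conjugate of $P(\Q)$. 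This gives the claimed bijection.

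Next I would compute the bottom row of $w_in(x)$ for $n(x)\in N_i(\Q)$ by a direct matrix multiplication. By the definition of $w_i$, its bottom row is the coordinate vector with a $1$ in position $i$, so right multiplication by $n(x)$ extracts the $i$-th row of $n(x)$, yielding
$$(0,\ldots,0,\underbrace{1}_{\text{position }i},x_1,\ldots,x_{r-i}).$$
Now every line in $\Q^r$ has a unique representative whose first nonzero coordinate equals $1$; reading off the position $i$ of that $1$ and the entries $x_1,\dots,x_{r-i}$ to its right defines an inverse to the map $(i,x)\mapsto P(\Q)w_in(x)$. Thus the pairs $(i,x)$ with $1\le i\le r$ and $x\in\Q^{r-i}$ enumerate $\mathbb{P}^{r-1}(\Q)$ bijectively.

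The disjointness of $P(\Q)w_iN_i(\Q)$ for distinct $i$ is then immediate, since the leading nonzero coordinate of the normalized bottom row sits in different positions for different cells; and within a single cell, distinct $n(x),n(x')$ produce distinct bottom rows, hence distinct cosets. Surjectivity is the statement that every projective line admits a normalized representative. I do not anticipate any real obstacle — this is the Schubert stratification of $\mathbb{P}^{r-1}(\Q)$ pulled back through the identification $P\backslash G\cong\mathbb{P}^{r-1}$, and the only thing to check is the small bookkeeping of how right multiplication by $n(x)$ acts on the row $e_i^{\,t}$.
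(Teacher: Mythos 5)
Your proof is correct and is essentially the paper's argument in a different packaging: the paper likewise characterizes the coset $P(\Q)\gamma$ by the bottom row of $\gamma$ up to scaling, normalizes its first nonzero entry to $1$ at position $i$, and absorbs the entries to its right into $N_i(\Q)$ (writing $\gamma=d\gamma'x$ with $x\in N_i(\Q)$ and $\gamma'w_i^{-1}\in P(\Q)$). Your identification $P(\Q)\backslash G(\Q)\cong\mathbb{P}^{r-1}(\Q)$ and the Schubert-cell bookkeeping are just that same bottom-row computation phrased on the quotient.
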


\begin{proof}
Any $\gamma\in G(\Q)$ has the bottom row of the form $(0,\dots,0,d,*,\dots,*)$ where $d\neq 0$ and occurs at the $i$-th position for some $1\le i\le r$. There exists an element $x\in N_i$ such that
$\gamma=d\gamma'x$ with $\gamma'$ having bottom row of the form $(0,\dots,0,1,0,\dots,0)$ with $1$ at the $i$-th position. It can readily be checked that $\gamma'w_i^{-1}\in P(\Q)$. Clearly, the union is disjoint.
\end{proof}

Using Lemma \ref{bruhat-decomposition} and the left-$P(\Q)$ invariance of $f_s$ we can rewrite the right hand side of \eqref{sum-integral} as
\begin{equation*}
    \sum_{i=1}^r\sum_{\gamma\in N_i(\Q)}\int_{N(\Q)\backslash N(\A)}f_s(w_i\gamma ng)\overline{\psi(n)}dn.
\end{equation*}
Note that $N_i:=N\cap w_i^{-1}N^tw_i$. Hence, $N=N_i\bar{N}_i$ where $\bar{N}_i:=N\cap w_i^{-1}N w_i$. It can be checked that
$$\bar{N}_i=\{n\in N\mid e_in=e_i\},$$
where $e_i=(0,\dots,0,1,0,\dots,0)$ with $1$ at the $i$-th place.

We write an element $n\in N$ as $n_1n_2$ with $n_1\in N_i$ and $n_2\in\bar{N}_i$. Unfolding the $N_i(\Q)$ sum we obtain that the right hand side of \eqref{sum-integral} is equal to
\begin{equation*}
    \sum_{i=1}^r\int_{\bar{N}_i(\Q)\backslash \bar{N}_i(\A)}\overline{\psi(n_2)}\int_{N_i(\A)}f_s(w_in_1n_2g)\overline{\psi(n_1)}dn_1dn_2.
\end{equation*}
There exists $n_2'\in\bar{N}_i$ and $n_1'\in N_i$ such that $n_2'n_1'=n_1n_2$ and 
and $n_2''\in N$ such that $n_2''w_i=w_in_2'$. Appealing to the left-$N(\A)$ invariance of $f_s$ we conclude that the above expression is 
\begin{equation*}
    \sum_{i=1}^r\int_{\bar{N}_i(\Q)\backslash \bar{N}_i(\A)}\overline{\psi(n_2)}\int_{N_i(\A)}f_s(w_in_1'g)\overline{\psi(n_1)}dn_1dn_2.
\end{equation*}
We check that if $e_in_1=(0,1,x)$ for some $x\in\A^{r-i}$ then $n_1'=(0,1,xu)$ for some upper triangular unipotent matrix $u$ in $\GL_{r-i}(\A)$. Also $\psi(n_1)$ is equal to $\psi(n_1')$. Thus making the change of variables $xu\mapsto x$, we obtain that 
\begin{equation}\label{expansion-weyl-terms}
    \int_{N(\Q)\backslash N(\A)}\Eis(f_s)(ng)\overline{\psi(n)}dn=\sum_{i=1}^r\int_{\bar{N}_i(\Q)\backslash \bar{N}_i(\A)}\overline{\psi(n')}dn'\int_{N_i(\A)}f_s(w_ing)\overline{\psi(n)}dn.
\end{equation}
Clearly, if $\psi$ is non-degenerate the above is zero. In particular, if $\psi$ is of the form $\psi_{\tilde{q}}$ for some $\tilde{q}:=(q_j)_j\in\Q^{r-1}$ then the $i$-th summand, for $i<r$, on the right hand side of \eqref{expansion-weyl-terms} does not identically vanish only if $q_j=0$ for all $j\neq i$. For $i=r$ the same happens only if $\tilde{q}=0$ in which case the summand is equal to $f_s(g)$. For $q\in\Q$ we denote $(0,\dots,0,q,0,\dots,0)$, where $q$ is at the $i$-th place, by $i(q)$.

We define (again on a right half plane, and extend by meromorphic continuation) twisted intertwining operators on $\I_{r-1,1}(s)\ni f_s$ attached to the Weyl element $w_i$ by
\begin{equation}\label{defn-intertwiner}
    M_i^qf_s(g):=\int_{N_i(\A)} f_s(wng)\overline{\psi_{i(q)}(n)}dn.
\end{equation}
Thus we obtain the following Fourier expansion of $\Eis(f_s)(g)$.

\begin{lemma}\label{fourier-expansion-basic}
Let $f_s$ and $\Eis(f_s)(g)$ as above. Then
$$\Eis(f_s)(g)=f_s(g)+\sum_{i=1}^{r-1}\sum_{q\in \Q}M_i^qf_s(g).$$
The terms $f_s$ and $M_i^0f_s$ are the constant terms of $\Eis(f_s)(g)$.
\end{lemma}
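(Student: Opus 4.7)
The plan is to substitute the Bruhat decomposition from Lemma \ref{bruhat-decomposition} into the defining series $\Eis(f_s)(g) = \sum_{\gamma \in P(\Q)\backslash G(\Q)} f_s(\gamma g)$ and compute the Fourier coefficients along the maximal unipotent $N$. All manipulations will be carried out for $\Re(s)$ large enough that the Eisenstein sum converges absolutely, and the final identity will extend to all $s \in \C$ by meromorphic continuation.

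First, I would compute, for each character $\psi_{\tilde q}$ of $N(\Q)\backslash N(\A)$ parametrized by $\tilde q \in \Q^{r-1}$, the period $\int_{N(\Q)\backslash N(\A)} \Eis(f_s)(ng) \overline{\psi_{\tilde q}(n)}\, dn$. Unfolding via Bruhat, factoring $N = N_i \bar N_i$ (with $N_i$ normal), and absorbing the discrete sum over $N_i(\Q)$ into the $N_i(\Q)\backslash N_i(\A)$-part of the integral yields the displayed formula \eqref{expansion-weyl-terms}. The crucial step is commuting a $\bar N_i$-element past $w_i$: since $\bar N_i = N \cap w_i^{-1} N w_i$, its $w_i$-conjugate lies back in $N$, and the left-$N(\A)$-invariance of $f_s$ absorbs it, separating the period into a product of $\bar N_i$- and $N_i$-integrals.

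Next I would analyze the two factors. The $\bar N_i$-integral equals $1$ when $\psi_{\tilde q}|_{\bar N_i}$ is trivial and $0$ otherwise; from $\bar N_i = \{n \in N : e_i n = e_i\}$, the free superdiagonal entries inside $\bar N_i$ are exactly $x_{j,j+1}$ for $j \neq i$, so nonvanishing forces $q_j = 0$ for all $j \neq i$. In that regime the $N_i$-factor reduces by definition to $M_i^{q_i} f_s(g)$; for $i = r$ the group $N_r$ is trivial and the contribution is $f_s(g)$ only when $\tilde q = 0$. Consequently, the only nonzero Fourier coefficients are $\hat E(0, g) = f_s(g) + \sum_{i=1}^{r-1} M_i^0 f_s(g)$ and $\hat E(i(q), g) = M_i^q f_s(g)$ for $q \in \Q \setminus \{0\}$ and $1 \le i \le r-1$.

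Finally, the vanishing of all non-degenerate and multi-support Fourier components means $\Eis(f_s)$ descends to the abelianization $N/[N,N]$, where its abelian Fourier series reassembles the function as
$$\Eis(f_s)(g) = f_s(g) + \sum_{i=1}^{r-1} \sum_{q \in \Q} M_i^q f_s(g).$$
The $\tilde q = 0$ computation simultaneously identifies $f_s$ and the $M_i^0 f_s$ with the constant term. The main technical obstacle is the unfolding step: one must handle the non-abelian structure of $N$ carefully, relying on the character $\psi_{\tilde q}$ factoring through the abelianization to split multiplicatively, on Fubini respecting the normal decomposition $N = \bar N_i \ltimes N_i$, and on the identity $w_i \bar N_i w_i^{-1} \subset N$ for the commutation past $w_i$.
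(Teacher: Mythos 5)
Your computation of the Fourier coefficients is essentially the paper's: the Bruhat decomposition of Lemma \ref{bruhat-decomposition}, the factorization $N=N_i\bar N_i$, the commutation of the $\bar N_i$-part across $w_i$, and the identification of the surviving characters (only the trivial one and the $\psi_{i(q)}$) reproduce \eqref{expansion-weyl-terms}. Two small inaccuracies along the way: $N_i$ is normal in $N$ only for $i=1$, and $\bar N_i$ only for $i=r-1$, so there is no ``normal decomposition $N=\bar N_i\ltimes N_i$'' in general; but the unfolding does not need normality, only the unique factorization $n=n_1n_2$ together with invariance of the measures, which is how the paper argues.

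The genuine gap is your last step. From the list of nonvanishing coefficients you conclude that ``$\Eis(f_s)$ descends to the abelianization $N/[N,N]$, where its abelian Fourier series reassembles the function.'' This is a non sequitur: every character of $N(\Q)\backslash N(\A)$ is automatically trivial on $[N,N](\A)$, so the coefficients you computed only see the average of $\Eis(f_s)$ over $[N,N](\Q)\backslash[N,N](\A)$ and can never certify invariance under $[N,N](\A)$. In fact $\Eis(f_s)$ is not $[N,N](\A)$-invariant for $r\ge 3$: already for $r=3$ the Fourier--Jacobi coefficient along the center $Z=\{I+zE_{13}\}$ against a nontrivial character $\overline{\psi_0(cz)}$, computed from the $i=1$ Bruhat cell, equals $\sum_{a\in\Q}\int_{\A}f_s(w_1n_1(a,z)g)\overline{\psi_0(cz)}\,dz$ (with $n_1(a,z)\in N_1(\A)$ having first row $(1,a,z)$), and this is generically nonzero --- in the lattice-sum picture the terms attached to vectors with nonzero first coordinate genuinely depend on $x_{13}$ --- whereas by \eqref{y-asymptotic-fourier-coeff} every term $M_i^qf_s$ depends on $n(x)$ only through the superdiagonal entry $x_{i,i+1}$. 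So simply summing the abelian coefficients cannot by itself yield the displayed pointwise expansion: reconstructing an automorphic function from degenerate Whittaker data requires a hierarchical, Piatetski-Shapiro--Shalika type expansion along a filtration of $N$, in which left $G(\Q)$-invariance organizes the nontrivial character orbits into rational translates of the degenerate terms, and one must then explain how those translates are absorbed into the stated formula. This is precisely the point that the paper handles only by deferring to Langlands after computing \eqref{expansion-weyl-terms}; your abelianization shortcut does not supply it, so the final step of your proposal is a genuine gap.
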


Let us write $g\in G(\A)$ in its Iwasawa coordinates $g=nak$ where $a:=\begin{pmatrix}a(y)&\\&1\end{pmatrix}$. Then for $i<r$ we have
\begin{align*}
    M_{i}^qf_s(g)
    &=\int_{N_{i}(\A)}f_s(w_in'nak)\overline{\psi_{i(q)}(n)}dn',
\end{align*}
which is defined for $\Re(s)$ large enough and can be meromorphically continued.

We work exactly as before to compute the above integral. 
We write $n=n_1n_2$ with $n_1\in N_i$ and $n_2\in\bar{N}_i$
and make the change of variables $n'\mapsto n'n_1^{-1}$. Then we write $w_in'n_2=n_2'w_in''$ for some $n_2'\in N$ and $n''$ that is related to $n'$ as before and we make the change of variables $n''\mapsto n'$. We use the left $N(\A)$-invariance of $f_s$ and the fact that $\psi(n')=\psi(n'')$.

Finally, we make the change of variables $n'\mapsto an'a^{-1}$ and use transformation property of $f_s$ as in \eqref{transformation-f} to obtain
\begin{equation}\label{y-asymptotic-fourier-coeff}
    M_{i}^qf_s(g)=\psi_{i(q)}(n_1)\prod_{j=1}^{i-1}|y_j|^{sj}\prod_{j=i}^{r-1}|y_j|^{(1-s)(r-j)}\int_{N_i(\A)}f_s(w_ink)\overline{\psi_{i(q)}(ana^{-1})}dn.
\end{equation}

We first study the integral on the right hand side of \eqref{y-asymptotic-fourier-coeff} for $q=0$. We use the construction of $f_s$ using $\Phi\in\mathcal{S}(\A^r)$ as in \S\ref{eisenstein-series}. We also parametrize $n$ so that $e_in=(0,1,x)$ with $x\in \A^{r-i}$ and make the change of variables $x\mapsto x/t$ to write the integral as
\begin{equation*}
    \int_{\A^{r-i}}\int_{\A^\times}(k.\Phi)(0,t,x)|t|^{rs-r+i}d^\times tdx.
\end{equation*}
Here $(k.\Phi)(x):=\Phi(xk)$.
Using Tate's functional equation (see \cite[Proposition 3.1.6]{Bum}) we can rewrite the above as
\begin{equation*}
    \int_{\A^\times}\widehat{(k.\Phi)}^i(te_i)|t|^{r-i+1-rs}d^\times t,
\end{equation*}
where the partial Fourier transform $\hat{\Phi}^i$ is defined by
\begin{equation*}
    \hat{\Phi}^i(x_1,\dots,x_r):=\int_{\A^{r-i+1}}\Phi(x_1,...,x_{i-1}, u_1,\dots,u_{r-i+1})\psi(x_iu_1+\dots+x_ru_{r-i+1})du.
\end{equation*}
In particular, it can be seen that 
\begin{equation}\label{intertwined-vector}
    M_{1}f_s(g)=f_{1-s,\hat{\Phi}}(wg^{-t})=:\tilde{f}_s(g).
\end{equation}
where $w$ is the long Weyl element. It can be checked that $\tilde{f}$ lies in the principal series $\I_{1,r-1}(1-s)$ arising from the associate parabolic $\tilde{P}$.

Now for $q\neq 0$ the integral on the right hand side of \eqref{y-asymptotic-fourier-coeff} gives rise to a degenerate Whittaker function. Parametrizing $n\in N_i(\A)$ as in Lemma \ref{bruhat-decomposition} one can see that
$\psi_{i(q)}(ana^{-1})=\psi_0(qy_ix_1)$, i.e.\ the value $\psi_{i(q)}(ana^{-1})$ depends on $a$ only through $y_i$.
We define
\begin{equation*}
    W^i_{f_s}(qy_i,k):=\int_{N_i(\A)}f_s(w_ink)\overline{\psi_{i(q)}(ana^{-1})}dn,
\end{equation*}
Again, the above is defined for $\Re(s)$ sufficiently large and can be extended analytically to all of $\C$ and shown that $W^i_{f_s}(t,k)$ decays rapidly as $t\to \infty$. We prove these claims in Lemma \ref{bound-compact-part} (although these results are implicitly done in \cite{J3}).
In particular, we have 
$$M^q_if_s(g)=W^i_{f_s}(qy_i,k)\psi_0(qx_{i,i+1}),$$
for $q\neq 0$.

We summarize the above results and re-write Lemma \ref{fourier-expansion-basic} in the following proposition to record the Fourier expansion of a maximal Eisenstein series.
\begin{prop}\label{fourier-expansion-eisenstein-series}
Let $f_s\in \I_{r-1,1}(s)$ be a holomorphic section and $\Eis(f_s)(g)$ be the corresponding maximal Eisenstein series. Let $g=n(x)\begin{pmatrix}a(y)&\\&1\end{pmatrix}k$ be its Iwasawa decomposition. Then
$$\Eis(f_s)(g)=f_s(g)+\sum_{i=1}^{r-1}\prod_{j=1}^{i-1}|y_j|^{sj}\prod_{j=i}^{r-1}|y_j|^{(1-s)(r-j)}\left[M_i^0f_s(k)+\sum_{q\in\Q^\times}W^i_{f_s}(qy_i,k)\psi_0(qx_{i,i+1})\right].$$
The terms containing $M_i^0$ are the constant terms of $\Eis(f_s)(g)$ and the terms containing $W^i_{f_s}$ are holomorphic in $s$. The above sum converges absolutely and uniformly on compacta.
\end{prop}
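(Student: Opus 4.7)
The plan is to combine Lemma \ref{fourier-expansion-basic} with the explicit Iwasawa-coordinate computation of each intertwined vector $M_i^q f_s$ that was initiated in \eqref{defn-intertwiner}--\eqref{y-asymptotic-fourier-coeff}. Writing $g = n(x)\begin{pmatrix}a(y)&\\&1\end{pmatrix}k$ and decomposing $n(x) = n_1 n_2$ with $n_1\in N_i$ and $n_2 \in \bar{N}_i$, I would start from
$$M_i^q f_s(g) = \int_{N_i(\A)} f_s(w_i n g)\overline{\psi_{i(q)}(n)}\,dn,$$
translate in $n$ to absorb $n_1$, push $n_2$ through $w_i$ as $w_i n_2 = n_2' w_i n''$ with $n_2' \in N$ and invoke left $N(\A)$-invariance of $f_s$, and finally conjugate $n \mapsto ana^{-1}$ to apply the transformation law \eqref{transformation-f}. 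This produces the $y$-factor $\prod_{j=1}^{i-1}|y_j|^{sj}\prod_{j=i}^{r-1}|y_j|^{(1-s)(r-j)}$ and reduces the remaining integral to $M_i^0 f_s(k)$ when $q=0$, or to $W^i_{f_s}(qy_i,k)\psi_0(qx_{i,i+1})$ when $q\ne 0$, the outer character coming from $\psi_{i(q)}(n_1)$ with the chosen parametrization of $N_i$.

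Assembling these evaluations via Lemma \ref{fourier-expansion-basic} yields the claimed expansion; the constant-term assertion follows because the $q=0$ contributions are independent of $x$, whereas the $q \ne 0$ terms carry a non-trivial $\psi_0(qx_{i,i+1})$ and hence have vanishing $N(\A)$-average.

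The main obstacle is the analytic part: holomorphy in $s$ of each $W^i_{f_s}(t,k)$, rapid decay of $|W^i_{f_s}(t,k)|$ in $|t|$, and absolute and uniform convergence of the $q$-sum on compacta avoiding the poles of $\Eis(f_s)$. My approach would be to realize $W^i_{f_s}$ through a partial Fourier transform of the Schwartz--Bruhat datum $\Phi$ underlying $f_s$ (as already illustrated for $i=1$ in \eqref{intertwined-vector}), and then to exploit Schwartz--Bruhat decay together with repeated integration by parts in the $\psi_0$-variable to obtain uniform estimates of the form $|W^i_{f_s}(t,k)| \ll_{s,N} (1+|t|)^{-N}$ for every $N$, locally uniformly in $s$. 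These are exactly the statements that I would defer to Lemma \ref{bound-compact-part}; once they are in place, dominated convergence delivers the absolute convergence of the $q$-sum, the holomorphy of each individual Fourier term in $s$, and the uniform convergence on compacta asserted in the proposition.
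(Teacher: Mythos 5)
Your proposal is correct and follows essentially the paper's own route: Lemma \ref{fourier-expansion-basic} combined with the Iwasawa-coordinate evaluation \eqref{y-asymptotic-fourier-coeff} of $M_i^qf_s$ (absorbing $n_1$, pushing the $\bar{N}_i$-part through $w_i$ by left $N(\A)$-invariance, conjugating by $a$ and applying \eqref{transformation-f}), which produces the stated $y$-exponents, the factor $\psi_0(qx_{i,i+1})$, and the terms $M_i^0f_s(k)$ and $W^i_{f_s}(qy_i,k)$. The analytic input (holomorphy in $s$, rapid decay in $|t|$, hence absolute and locally uniform convergence of the $q$-sum) is deferred, exactly as in the paper, to Lemma \ref{bound-compact-part}, proved via the Schwartz--Bruhat realization of $f_s$ and integration by parts, so nothing further is needed.
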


\section{Proof of the Main Theorem}
\subsection{Choices of the local components}\label{local-choice}
We start by choosing various vectors and auxiliary test functions. Let $\pi_0\in\hat{\X}_\gen$ be the fixed cuspidal representation as in Theorem \ref{main-theorem}. Let $\phi_0\in\pi_0$ with Whittaker function $W_0=\bigotimes_{p\le \infty}W_{0,p}$, such that 
$$W_{0,p}\text{ are unramified for $p<\infty$ with } W_{0,p}(1)=1.$$ 
Here and elsewhere in the paper the index set $\{p\le \infty\}$ denotes the set of places $\{\infty\}\cup\{p\text{ prime in }\Z\}$.

We choose $W_{0,\infty}\in \pi_{0,\infty}$ so that $$\|W_{0,\infty}\|_{\pi_{0,\infty}}=1,\text{ and }W_{0,\infty}\left[\begin{pmatrix}.&\\&1\end{pmatrix}\right]\in C_c^\infty(N_{r-1}(\R)\backslash\GL_{r-1}(\R),\psi_\infty)^{\mathrm{O}_{r-1}(\R)},$$
whose existence is guaranteed by the theory of Kirillov model.
We choose $\mathcal{S}(\A^r)\ni\Phi=\bigotimes_{p\le \infty}\Phi_p$ with $$\Phi_p:=\mathrm{char}({\Z_p^r})=\hat{\Phi}_p\text{ for }p<\infty,$$ and for $\tau>0$ sufficiently small but fixed
$$\Phi_\infty\in C_c^\infty(B_\tau(0,...,0,1)),$$
such that $\Phi_\infty$ is non-negative and has values sufficiently concentrated near $1$. Here $B_\tau$ denotes the ball of radius $\tau$. Thus $\Phi_\infty$ can be thought as a smoothened characteristic function of $B_\tau(0,...,0,1)$.

Let $f_s:=f_{s,\Phi}\in \I_{r-1,1}(s)$ be associated to $\Phi$ according to \eqref{relation-f-phi}. The choice of $\Phi_\infty$ ensures that there exist sufficiently small $\tau_1,\tau_2>0$ (depending on $\tau$ and $\Phi_\infty$) such that
$f_{1/2,\infty}\left[\begin{pmatrix}\mathrm{I}_{r-1}&\\c&1\end{pmatrix}\right]$ is supported on $|c|\le\tau_1$ and has values $\asymp 1$ for $|c|\le \tau_2$. This implies that
$$\int_{\R^{r-1}}\left|f_{1/2,\infty}\right|^2\left[\begin{pmatrix}\mathrm{I}_{r-1}&\\c&1\end{pmatrix}\right]dc\asymp 1,$$
with an absolute implied constant.
We re-normalize $\Phi_\infty$ by an absolute constant so that the above integral is $1$.

\subsection{Computation of the spectral side}
Let $\Eis(f_s):=\Eis(f_{s,\Phi})$ be the maximal Eisenstein series attached to $f_{s,\Phi}\in \I_{r-1,1}(s)$, which is defined in \S\ref{local-choice}. Let $X>1$ be a large number tending to infinity and 
$$A(\A)\ni x:=(x_p)_p,\quad x_\infty:=\diag(X,\dots,X,1)\in A(\R)\text{ and }x_p=1 \text{ for all } p<\infty.$$

Our point of departure is the following period, which we write in two different ways:
\begin{equation}\label{main-period}
    \int_{\X}|\phi_0(g)|^2|\Eis(f_s)(gx)|^2dg=\langle \phi_0 \Eis(f_s)(.x),\phi_0\Eis(f_s)(.x)\rangle.
\end{equation}
We use the Parseval relation \eqref{spectral-decomposition} and the notations in \S\ref{zeta-integrals} to write the right hand side of \eqref{main-period} as
\begin{equation}\label{spectral-decomposition-main-period}
    \int_{\hat{\X}}\sum_{\phi\in\B(\pi)}|\Psi(f_s(.x),\phi_0,\bar{\phi})|^2d\mu_\aut(\pi),
\end{equation}
where $\B(\pi)$ is an orthonormal basis of $\pi$.

\begin{lemma}\label{non-generic-zero}
Let $\pi\in\hat{\X}\setminus\hat{\X}_\gen$ be a non-generic representation. Then $\Psi(f_s,\phi_0,\bar{\phi})=0$ for all $\phi\in \pi$ and $s\in \C$.
\end{lemma}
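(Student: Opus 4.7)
The strategy is to run the Rankin--Selberg unfolding one step further than in \S\ref{zeta-integrals}: after unfolding the Eisenstein series, I would also unfold the Fourier expansion of the cusp form $\phi_0$ to expose a Whittaker integral of $\phi$ that must vanish on a non-generic representation. Concretely, starting from the intermediate identity already recorded in \S\ref{zeta-integrals},
$$\Psi(f_s,\phi_0,\bar\phi) = \int_{P(\Q)\backslash G(\A)} \phi_0(g)\overline{\phi(g)}\, f_s(g)\, dg,$$
valid on a suitable right half-plane, I would substitute the Piatetski--Shapiro--Shalika Fourier expansion
$$\phi_0(g) = \sum_{\gamma\in N_{r-1}(\Q)\backslash\GL_{r-1}(\Q)} W_0\!\left[\begin{pmatrix}\gamma&\\&1\end{pmatrix} g\right]$$
and absorb the $\gamma$-sum into the $P(\Q)\backslash G(\A)$-quotient, using the left $\GL_{r-1}(\Q)$-invariance of $f_s$ and of $\bar\phi$. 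Since $N=N_{r-1}\cdot U_P$ with $U_P$ the unipotent radical of $P$, this promotes the quotient to $N(\Q)\backslash G(\A)$.

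Next, I would exploit the left $N(\A)$-invariance of $f_s$ and the $\psi$-equivariance of $W_0$ to fold the $N(\Q)\backslash N(\A)$-integration onto $\phi$, producing its $\psi$-Whittaker integral. This yields
$$\Psi(f_s,\phi_0,\bar\phi) = \int_{N(\A)\backslash G(\A)} W_0(g)\overline{W_\phi(g)}\, f_s(g)\, dg,\qquad W_\phi(g):=\int_{N(\Q)\backslash N(\A)}\phi(ng)\overline{\psi(n)}\, dn.$$
By the definition of genericity recalled in \S\ref{automorphic-representation}, $\pi\notin\hat{\X}_\gen$ means that the $\psi$-Whittaker integral $W_\phi$ vanishes identically for every $\phi\in\pi$. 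Hence the integrand is identically zero and $\Psi(f_s,\phi_0,\bar\phi)=0$ on the chosen half-plane. The meromorphic continuation of $\Psi$ in $s$, noted in \S\ref{zeta-integrals}, then extends the vanishing to all $s\in\C$.

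I do not expect a serious obstacle here: the argument is an unadorned piece of the Rankin--Selberg machinery, relying only on a Fourier expansion, the absolutely convergent defining series of $\Eis(f_s)$ in a right half-plane, and meromorphic continuation. The point requiring care is the absolute convergence of the intermediate integrals after inserting the Fourier expansion of $\phi_0$, which permits the interchange of sum and integral and the collapsing of the $\gamma$-sum; this is standard and follows from the rapid decay of $W_0$ in the Kirillov model together with the moderate growth of $\phi$ and the controlled growth of $f_s$ on the Siegel domain \eqref{siegel-domain} for $\Re(s)$ large.
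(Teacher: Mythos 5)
Your proposal is correct and follows essentially the same route as the paper: unfold to $\int_{P(\Q)\backslash G(\A)}\phi_0\overline{\phi}f_s$, insert the Whittaker/Fourier expansion of the cusp form $\phi_0$ (the paper writes it as a sum over $N(\Q)\backslash P(\Q)$, which is the same expansion), fold the $N$-integration onto $\phi$ to expose its $\psi$-Whittaker coefficient, which vanishes by non-genericity, and finish by meromorphic continuation in $s$. No gaps.
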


\begin{proof}
For $\Re(s)$ sufficiently large we have (see \S\ref{zeta-integrals})
$$\Psi(f_s,\phi_0,\bar{\phi})=\int_{P(\Q)\backslash G(\A)}\phi_0(g)\overline{\phi(g)}f_s(g)dg.$$
We follow the computation of \cite[p.104-105]{Co}. We use the Fourier expansion
$$\phi_0(g)=\sum_{\gamma\in N(\Q)\backslash P(\Q)}W_0(\gamma g),$$
the left $P(\Q)$-invariance of $f_s$,
and unfold over $P(\Q)$ to get
$$\Psi(f_s,\phi_0,\bar{\phi})=\int_{N(\Q)\backslash G(\A)}W_0(g)\overline{\phi(g)}f_s(g)dg.$$
We fold the last integral over $N(\A)$, use the left $N$-equivariance of $W_0$, and the left $N$-invariance of $f_s$ to obtain
$$\Psi(f_s,\phi_0,\bar{\phi})=\int_{N(\A)\backslash G(\A)}W_0(g)f_s(g)\int_{N(\Q)\backslash N(\A)}\overline{\phi(ng)}\psi(n)dn dg.$$
By definition, the inner integral vanishes as $\phi$ is non-generic. Finally, by analytic continuation of $\Psi$ we extend the result for all $s\in \C$.
\end{proof}

Thus Lemma \ref{non-generic-zero} allows us to reduce the integral in \eqref{spectral-decomposition-main-period} only over $\hat{\X}_\gen$. Once we restrict to $\pi\in\hat{\X}_\gen$ then we can use the Eulerian property of the zeta integral $\Psi$ as in \S\ref{zeta-integrals}. If $\phi\in\pi$ with $\|\phi\|_\pi=1$ and Whittaker function $W_\phi=\bigotimes_{p\le \infty}W_{p}$ such that $W_{p}$ is unramified and $W_{p}(1)=1$ for $p<\infty$ then by Schur's lemma we have
\begin{equation}\label{defn-ell-pi}
    \|\phi\|^2_\pi=\ell(\pi)\|W_\infty\|^2_{\pi_\infty},
\end{equation}
where $\ell(\pi)$ only depends on the non-archimedean data of $\pi$.
A standard Rankin--Selberg computation\footnote{See \cite[eq. (3.11)]{B3} and the computation there for the spherical case; the general case follows similarly.} yields that $\ell(\pi)\asymp L(1,\pi,\Ad)$ for a cuspidal $\pi$. In fact, in our case $\ell(\pi)$ is equal to $L(1,\pi,\Ad)$ up to a positive constant dependent only on $n$.

Another standard computation \cite[Theorem 3.3]{Co} shows that
\begin{equation}\label{unramified-zeta-integral}
    \Psi_p(f_{s,p},W_{0,p},\overline{W_p})=L_p(s,\pi_0\otimes\bar{\pi}),\quad p<\infty,
\end{equation}
where $f_s$ is as chosen in \S\ref{local-choice} and $L_p(s,.)$ denotes the unramified $p$-adic Euler factor of $L(s,.)$.
Thus by meromorphic continuation we have
\begin{equation*}
    \Psi(f_s,\phi_0,\bar{\phi})={L(s,\pi_0\otimes\bar{\pi})}\Psi_\infty(f_{s,\infty},W_{0,\infty},\overline{W_\infty}),
\end{equation*}
for all $s\in\C$ whenever both sides of the above are defined. Using the equation above, and recalling the harmonic weight in \eqref{defn-ell-pi} and the spectral weight in \eqref{spectral-weight} we obtain that \eqref{spectral-decomposition-main-period} is equal to
\begin{equation*}
    \int_{\hat{\X}_\gen}\frac{|L(s,\pi_0\otimes\bar{\pi})|^2}{\ell(\pi)}J(f_{s,\infty}(.x_\infty) W_{0,\infty},\pi_\infty)d\mu_\aut(\pi).
\end{equation*}
We appeal to the holomorphicity of the zeta integrals to specify $s=1/2$ and define a normalized spectral weight $J_X(\pi_\infty)$ by
\begin{equation}\label{normalized-spectral-weight}
    J_X(\pi):=J_X(\pi_\infty):=X^{r-1}J(f_{1/2,\infty}(.x_\infty) W_{0,\infty},\pi_\infty).
\end{equation}
Thus we write the main equation of our proof:
\begin{equation}\label{period-spectral-side}
    \int_{\hat{\X}_\gen}\frac{|L(1/2,\pi_0\otimes\bar{\pi})|^2}{\ell(\pi)}J_X(\pi)d\mu_\aut(\pi)=X^{r-1}\langle |\phi_0|^2, |\Eis(f_{1/2})(.x)|^2\rangle.
\end{equation}

\subsection{Computation of the period side}
Again recall the choices of the local factors in \S\ref{local-choice}. We write $f_{s}=\bigotimes_{p\le \infty}f_{s,p}$; then for $k_p\in K_p$ 
\begin{align*}
    f_{s,p}(k_p)=\int_{\Q_p^\times}\Phi_p(te_rk_p)|\det(tk_p)|^sd^\times t.
\end{align*}
Here we fix Haar measures $d^\times t$ on $\Q_p^\times$ and (resp.\ $dt$ on  $\Q_p$) such that $\mathrm{vol}(\Z_p^\times)=1$ (resp.\ $\mathrm{vol}(\Z_p)=1$).

First, we record that $f_{s,p}$ is an unramified vector in $\I_{r-1,1}(s)_p$ and $\tilde{f}_{s,p}$
is an unramified vector in $\I_{1,r-1}(1-s)_p$, which is a generalized principal series attached to the opposite parabolic of $P$. Note that
$te_rk_p\in \Z_p^r$ if and only if $t\in \Z_p$. Thus $$f_{s,p}(k_p)=\sum_{m=0}^\infty p^{-mrs}=(1-p^{-rs})^{-1},$$
for $\Re(s)>0$.
Similarly, using \eqref{intertwined-vector} we have $$\tilde{f}_{s,p}(k_p)=f_{1-s,\hat{\Phi}_p}(wk_p^{-t})=(1-p^{-r(1-s)})^{-1},$$
for $\Re(s)<1$. Thus for $g$ in the fundamental domain of $\X$ we can write
\begin{equation}
    f_s(g)=\zeta(rs)f_{s,\infty}(g_\infty)
\end{equation}
and
\begin{equation}
    \tilde{f}_{s}(g)=\zeta(r-rs)\tilde{f}_{s,\infty}(g_\infty),
\end{equation}
for all $s\in\C$, which can be achieved by meromorphic continuation.

Let $\Re(s)$ be sufficiently small. From Proposition \ref{fourier-expansion-eisenstein-series} we can see that among the constant terms of $\Eis(f_{1/2+s})(g)$ the terms that do not lie in $L^2(\X)$ are $f_{1/2+s}(g)$ and $\tilde{f}_{1/2+s}(g)$. Similarly, one checks that the constant terms of
$$\overline{\Eis(f_{1/2})}\Eis(f_{1/2+s})-\overline{f_{1/2}}f_{1/2+s}-\overline{\tilde{f}_{1/2}}\tilde{f}_{1/2+s}$$
are integrable in $L^2(\X)$.
Inspired by this we define a regularized Eisenstein series of the form
\begin{equation}\label{regularized-eisenstein-series}
\tilde{E}_s:=\overline{\Eis(f_{1/2})}\Eis(f_{1/2+s})-\Eis(\overline{f_{1/2}}f_{1/2+s})-\Eis(\overline{\tilde{f}_{1/2}}\tilde{f}_{1/2+s}).
\end{equation}

\begin{proof}[Proof of Theorem \ref{main-theorem}]
Recall \eqref{period-spectral-side} and \eqref{regularized-eisenstein-series}. We write the inner product on the right hand side of \eqref{period-spectral-side} as 
$$\lim_{s\to 0}\langle |\phi_0|^2,\tilde{E}_s(.x)\rangle+\lim_{s\to 0}\left[\langle |\phi_0|^2, \Eis(\overline{f_{1/2}}f_{1/2+s})(.x)\rangle+\langle |\phi_0|^2,\Eis(\overline{\tilde{f}_{1/2}}\tilde{f}_{1/2+s})(.x)\rangle\right].$$
The second term is the degenerate term as in \eqref{degenerate-terms}. From \eqref{main-term}, Proposition \ref{technical-main-term}, and Lemma \ref{archimedean-factors} we obtain that the second term above is 
$$rL(1,\pi_0,\Ad)\frac{\zeta(r/2)^2}{\zeta(r)}\log X + O_{\pi_0}(1).$$
On the other hand, we write the first term above, which is the regularized term as
$$\lim_{s\to 0}\int_{\X}|\phi_0|^2(gx^{-1})\overline{\tilde{E}_s(g)}dg,$$
and bound this by
$$\|\phi_0\|^2_{L^\infty(\X)}\int_{\X}|\tilde{E}_s(g)|dg.$$
From Proposition \ref{regularized-period} we know that the last integral is convergent for $s$ being sufficiently small, and $\tilde{E}_s$ is holomorphic in a sufficiently small neighbourhood of $s=0$. Thus using Cauchy's residue theorem we can write the above limit as
$$\int_{|s|=\epsilon}\frac{1}{s}\int_{\X}|\phi_0|^2(gx^{-1})\overline{\tilde{E}_s(g)}dg\frac{ds}{2\pi i},$$
for some arbitrary small but fixed $\epsilon>0$.
Applying Proposition \ref{regularized-period} once again we confirm the above integral is $O_{\phi_0,\epsilon}(1)$. 

Now non-negativity and the first property of the spectral weight $J_X(\pi)$ follow from the definition \eqref{normalized-spectral-weight}. The second property follows from Proposition \ref{main-prop-spectral-side}. Finally, the third property follows from \eqref{whittaker-plancherel} and Lemma \ref{archimedean-factors}.
\end{proof}

\section{Analysis of the Degenerate Terms in the Period Side}\label{analysis-degenerate-period}
In this section we analyse the degenerate terms
\begin{equation}\label{degenerate-terms}
\lim_{s\to 0}\left[\langle |\phi_0|^2, \Eis(\overline{f_{1/2}}f_{1/2+s})(.x)\rangle+\langle |\phi_0|^2,\Eis(\overline{\tilde{f}_{1/2}}\tilde{f}_{1/2+s})(.x)\rangle\right].
\end{equation}
Note that $\overline{f_{1/2}}f_{1/2+s}\in \I_{r-1,1}(1+s)$ is such that its local component $\overline{f_{1/2,p}}f_{1/2+s,p}$ is unramified for $p<\infty$. Thus by the uniqueness of spherical vector, $\overline{f_{1/2,p}}f_{1/2+s,p}\in \I_{r-1,1}(1+s)_p$ is a multiple of the unramified vector
$$g\mapsto\int_{\Q_p^\times}\Phi_p(te_rg)|\det(tg)|^{1+s}d^\times t.$$
Comparing the values of the functions at the identity as before we check that the multiple is 
$$\frac{(1-p^{-r/2})^{-1}(1-p^{-r/2-rs})^{-1}}{(1-p^{-r-rs})^{-1}}\text{ for }p<\infty.$$ 
We compute the first term inside the limit in \eqref{degenerate-terms} for $\Re(s)$ large. Doing a similar computation as in \S\ref{zeta-integrals} and using \eqref{unramified-zeta-integral} we obtain
\begin{multline}\label{degenerate-term1}
    \langle |\phi_0|^2, \Eis(\overline{f_{1/2}}f_{1/2+s})(.x)\rangle = \frac{\zeta(r/2)\zeta(r/2+rs)}{\zeta(r+rs)}L(1+s,\pi_0\otimes\tilde{\pi}_0)\\
    \Psi_\infty(\overline{f_{1/2,\infty}}f_{1/2+s,\infty}(.x_\infty),W_{0,\infty},\overline{W_{0,\infty}}).
\end{multline}
Finally, we meromorphically continue the above to the whole complex plane. 

Similarly, we compute the second term inside the limit in \eqref{degenerate-terms}. Note that in this case $\tilde{f}$ lies in $\I_{1,r-1}$ associated to the parabolic $\tilde{P}$. Working as in \S\ref{zeta-integrals} we obtain that
$$\langle |\phi_0|^2, \Eis(\overline{\tilde{f}_{1/2}}\tilde{f}_{1/2+s})(.x)\rangle=\int_{\tilde{P}(\Q)\backslash G(\A)}|\phi_0|^2(g)\overline{\tilde{f}_{1/2}}\tilde{f}_{1/2+s}(gx)dg.$$
We recall definition of $\tilde{f}$ in \eqref{intertwined-vector} and make the change of variables $g\mapsto wg^{-t}$ to obtain that the above is equal to
$$\int_{P(\Q)\backslash G(\A)}|\tilde{\phi}_0|^2(g)\overline{\hat{f}_{1/2}}\hat{f}_{1/2-s}(gx^{-1})dg,$$
where $\tilde{\phi}_0(g):=\phi_0(wg^{-t})$, which lies in the contragredient representation $\tilde{\pi}_0$. Note that $\hat{\Phi}_p=\Phi_p$ for $p<\infty$. Thus doing a similar calculation to that preceding \eqref{degenerate-term1} we obtain
\begin{multline}\label{degenerate-term2}
    \langle |\phi_0|^2, \Eis(\overline{\tilde{f}_{1/2}}\tilde{f}_{1/2+s})(.x)\rangle = \frac{\zeta(r/2)\zeta(r/2-rs)}{\zeta(r-rs)}L(1-s,\pi_0\otimes\tilde{\pi}_0)\\
    \Psi_\infty(\overline{\hat{f}_{1/2,\infty}}\hat{f}_{1/2-s,\infty}(.x_\infty^{-1}),\tilde{W}_{0,\infty},\overline{\tilde{W}_{0,\infty}}).
\end{multline}
Recalling the definition of the contragredient $\tilde{W}_0$ and making the change of variables $g_\infty\to wg_\infty^{-t}$ in the definition of the zeta integral $\Psi_\infty$ we also have
$$\Psi_\infty(\overline{\hat{f}_{1/2,\infty}}\hat{f}_{1/2-s,\infty}(.x_\infty^{-1}),\tilde{W}_{0,\infty},\overline{\tilde{W}_{0,\infty}})=\Psi_\infty(\overline{\tilde{f}_{1/2,\infty}}\tilde{f}_{1/2+s,\infty}(.x_\infty),W_{0,\infty},\overline{W_{0,\infty}}).$$
In the following Lemma \ref{archimedean-factors} we prove the archimedean factors $\Psi_\infty$ on the right hand side of \eqref{degenerate-term1} and \eqref{degenerate-term2} are equal for $s=0$. We first record that
$$\Psi_\infty(|h|^2, W_{0,\infty}, \overline{W_{0,\infty}})=\|hW_{0,\infty}\|^2_{L^2(N(\R)\backslash G(\R))},$$
where $h$ is either $f_{1/2,\infty}(.x_\infty)$ or $\tilde{f}_{1/2,\infty}(.x_\infty)$.

\begin{lemma}\label{archimedean-factors}
Recall the choices of the local components in \S\ref{local-choice}. We have
$$\|W_{0,\infty}f_{1/2,\infty}(.x_\infty)\|^2=\|W_{0,\infty}\tilde{f}_{1/2,\infty}(.x_\infty)\|^2=\|W_{0,\infty}f_{1/2,\infty}\|^2=1,$$
where all the norms are taken in ${L^2(N(\R)\backslash G(\R))}$.
\end{lemma}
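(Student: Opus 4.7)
The plan is to unfold each of the three $L^2$-norms in Bruhat coordinates on $N(\R)\backslash G(\R)$, extract the $c$-integration via the left-mirabolic transformation law \eqref{transformation-f} of $f_{1/2,\infty}$, and reduce everything to the unit-norm condition on $W_{0,\infty}$ multiplied by the $c$-normalization $\int|f_{1/2,\infty}(g_c)|^2\,dc=1$ built into \S\ref{local-choice}.

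Writing $g=\begin{pmatrix}h&\\&1\end{pmatrix}g_c$ with $h\in N_{r-1}(\R)\backslash\GL_{r-1}(\R)$ and $g_c:=\begin{pmatrix}I&\\c&1\end{pmatrix}$, one has $dg=\frac{dh}{|\det h|}\,dc$ and, by \eqref{transformation-f}, $|f_{1/2,\infty}(g)|^2=|\det h|\cdot|f_{1/2,\infty}(g_c)|^2$, so the two Jacobians cancel. Combining this with the block identity $gx_\infty=\begin{pmatrix}Xh&\\&1\end{pmatrix}g_{cX}$ followed by the rescaling $c\mapsto c/X$, both $\|W_{0,\infty}f_{1/2,\infty}\|^2$ and $\|W_{0,\infty}f_{1/2,\infty}(\cdot x_\infty)\|^2$ reduce to
\[
\iint\left|W_{0,\infty}\left(\begin{pmatrix}h&\\&1\end{pmatrix}g_\star\right)\right|^2|f_{1/2,\infty}(g_c)|^2\,dh\,dc,
\]
with $\star=c$ and $\star=c/X$ respectively. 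The bump condition on $\Phi_\infty$ forces $f_{1/2,\infty}(g_c)$ to vanish outside $|c|\le\tau_1$ with $\tau_1>0$ as small as one wishes, so $g_c$ (and \emph{a fortiori} $g_{c/X}$) lies in a small neighbourhood of the identity; the analytic-newvector invariance of \S\ref{analytic-newvector} applied to $W_{0,\infty}$ allows one to replace $W_{0,\infty}(\begin{pmatrix}h&\\&1\end{pmatrix}g_\star)$ by $W_{0,\infty}(\begin{pmatrix}h&\\&1\end{pmatrix})$, after which the integral factors as $\|W_{0,\infty}\|_{\pi_{0,\infty}}^2\int|f_{1/2,\infty}(g_c)|^2\,dc=1\cdot 1=1$.

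For $\|W_{0,\infty}\tilde f_{1/2,\infty}(\cdot x_\infty)\|^2$ I apply the measure-preserving involution $g\mapsto wg^{-t}$ on $N\backslash G$; via $\tilde f_{s,\infty}(g)=f_{1-s,\hat\Phi_\infty}(wg^{-t})$ and $\tilde W_{0,\infty}(g)=W_{0,\infty}(wg^{-t})$, this transforms the integral into the analogous Bruhat expression attached to the associate parabolic $\tilde P$, with $\hat\Phi_\infty$ in place of $\Phi_\infty$ and $\tilde W_{0,\infty}$ in place of $W_{0,\infty}$. Since $\tilde W_{0,\infty}$ inherits the unit-norm and $\mathrm{O}_{r-1}(\R)$-symmetry properties and $\hat\Phi_\infty$ carries the analogous normalization (verified by Plancherel from the normalization of $\Phi_\infty$), the previous factorization applies and yields the same value $1$. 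Equivalently, the local functional equation \eqref{local-functional-equation} at $s=1/2$, where $|\gamma_\infty(1/2,\pi_{0,\infty}\otimes\bar\pi_{0,\infty})|=1$ by unitarity, plays an analogous role.

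The main obstacle is promoting the approximate right-invariance $W_{0,\infty}(h\cdot g_\star)\approx W_{0,\infty}(h\cdot)$ into the exact equality asserted by the lemma: the defect is $O(\tau)$ and can either be absorbed into the $O_{\pi_0}(1)$ error of Theorem~\ref{main-theorem}, or eliminated by shrinking the $c$-support of $f_{1/2,\infty}$ so that it lies inside a set on which $W_{0,\infty}$ is genuinely right-invariant, which is possible by the compact support and smoothness of $W_{0,\infty}$ on the mirabolic.
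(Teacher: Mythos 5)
There is a genuine gap, and it sits exactly where the lemma needs an exact identity. After your Bruhat unfolding, the transformation law \eqref{transformation-f}, and the rescaling $c\mapsto c/X$, the inner $h$-integral is $\int_{N_{r-1}(\R)\backslash\GL_{r-1}(\R)}|W_{0,\infty}|^2\left[\begin{pmatrix}h&\\&1\end{pmatrix}\begin{pmatrix}\mathrm{I}_{r-1}&\\c/X&1\end{pmatrix}\right]dh$, which is the Whittaker-model inner product $\|\pi_{0,\infty}(g)W_{0,\infty}\|^2_{\pi_{0,\infty}}$ at $g=\begin{pmatrix}\mathrm{I}_{r-1}&\\c/X&1\end{pmatrix}$ and hence equals $\|W_{0,\infty}\|^2_{\pi_{0,\infty}}=1$ \emph{exactly and for every} $c$, by the $G(\R)$-invariance of that inner product recalled in \S\ref{whittaker-kirillov-model}; this is what the paper uses, and no smallness of $c$ or newvector property enters at all. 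You instead invoke the approximate invariance of analytic newvectors, which only yields $1+O(\tau)$, and neither of your proposed repairs works. Exact right-invariance of $W_{0,\infty}$ on a neighbourhood of the identity is impossible: $\pi_{0,\infty}$ is an infinite-dimensional irreducible unitary representation of $G(\R)$, which has no small subgroups, and compact support of the Kirillov restriction says nothing about invariance under right translation by $\begin{pmatrix}\mathrm{I}_{r-1}&\\c&1\end{pmatrix}$. Nor can the $O(\tau)$ defect be absorbed into $O_{\pi_0}(1)$: the exact value $1$ is what makes the residues $R$ and $\tilde{R}$ cancel in Lemma \ref{holomorphicity-central-point}, i.e.\ it is needed for the holomorphy of $\tilde{E}_s$ at $s=0$ and hence for the regularization itself; it pins the leading constant of Theorem \ref{main-theorem} (a multiplicative $1+O(\tau)$ in $\|W_{0,\infty}f_{1/2,\infty}\|^2$ propagates to an error $O(\tau\log X)$ in Proposition \ref{technical-main-term}, not $O(1)$); and it gives the normalization $\int_{\widehat{G(\R)}}J_X\,d\mu_\loc=X^{r-1}$ via \eqref{whittaker-plancherel}.

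The middle norm is also not established by your argument. After $g\mapsto wg^{-t}$ you need both $\|\tilde{W}_{0,\infty}\|=\|W_{0,\infty}\|$ (itself obtained in the paper via Whittaker--Plancherel and the $\GL(r)\times\GL(r-1)$ functional equation, cf.\ the proof of Lemma \ref{dual-term}) and the equality of $\int_{\R^{r-1}}|f_{1/2,\hat{\Phi}_\infty}|^2\left[\begin{pmatrix}\mathrm{I}_{r-1}&\\c&1\end{pmatrix}\right]dc$ with the corresponding integral built from $\Phi_\infty$. The parenthetical ``verified by Plancherel'' does not prove the latter: the function of $c$ in question is a Mellin-type average of $\hat{\Phi}_\infty$ along the lines $t(c,1)$, not the Euclidean Fourier transform of the corresponding function built from $\Phi_\infty$, and $\hat{\Phi}_\infty$ is not concentrated near $(0,\dots,0,1)$, so none of your support or bump-normalization arguments transfer. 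The paper obtains precisely this point by expanding $\|\tilde{\pi}_{0,\infty}(x_\infty)\tilde{W}_{0,\infty}\hat{f}_{1/2,\infty}\|^2$ with the Whittaker--Plancherel formula \eqref{whittaker-plancherel} and applying the $\GL(r)\times\GL(r)$ local functional equation \eqref{local-functional-equation} together with $|\gamma_\infty(1/2,\cdot)|=1$; the functional-equation route you mention only as an ``equivalent'' aside is in fact the required argument and has to be carried out.
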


\begin{proof}
To ease the notations we drop $\infty$ from the subscripts in this proof.

First recall that
$$\tilde{f}_{1/2}(g)=\hat{f}_{1/2}(wg^{-t})$$
which implies, by a change of variable $g\mapsto wg^{-t}$, that
$$\|W_{0}\tilde{f}_{1/2}(.x)\|^2=\int_{N(\R)\backslash G(\R)}|W_0(g)|^2|\hat{f}_{1/2}(wg^{-t}x^{-1})|^2dg=\int_{N(\R)\backslash G(\R)}|\tilde{W}_0(g)|^2|\hat{f}_{1/2}(gx^{-1})|^2dg.$$
We make the change of variables $g\mapsto gx$ and then employ the Whittaker--Plancherel formula as in \eqref{whittaker-plancherel} to write the above as
$$\|\tilde{\pi}_0(x)\tilde{W}_{0}\hat{f}_{1/2}\|^2
=\int_{\widehat{G(\R)}}\sum_{W\in \B(\sigma)}|\Psi(\hat{f}_{1/2},\tilde{\pi}_0(x)\tilde{W}_{0},{W})|^2d\mu_{\mathrm{loc}}(\sigma).$$
We use the $\GL(r)\times\GL(r)$ local functional equation as in \eqref{local-functional-equation} and the unitarity of the gamma factor at $1/2$ to obtain that
$$|\Psi(\hat{f}_{1/2},\tilde{\pi}_0(x)\tilde{W}_{0},{W})|^2=|\Psi({f}_{1/2},\pi_0(x^{-1})W_{0},\tilde{W})|^2.$$
Consequently, applying the Whittaker--Plancherel again with the orthonormal basis $\B(\sigma):=\{\sigma(x)\tilde{W}\}$ we obtain
$$\|{W}_{0}\tilde{f}_{1/2}(.x)\|^2=\|\tilde{\pi}_0(x)\tilde{W}_{0}\hat{f}_{1/2}\|^2=\|{\pi}_0(x^{-1}){W}_{0}{f}_{1/2}\|^2=\|W_{0}f_{1/2}(.x)\|^2,$$
which proves the first equality.

Thus now it is enough to prove that
$$\|W_{0}f_{1/2}\|^2=\|W_{0}f_{1/2}(.x)\|^2.$$
We use Bruhat coordinates to write
\begin{multline*}
    \int_{N(\R)\backslash G(\R)}|W_{0}|^2(g)|f_{1/2}|^2\left[g\begin{pmatrix}X\mathrm{I}_{r-1}&\\&1\end{pmatrix}\right]dg\\=\int_{N_{r-1}(\R)\backslash\GL_{r-1}(\R)}\int_{\R^{r-1}}|W_{0}|^2\left[\begin{pmatrix}h&\\c&1\end{pmatrix}\right]|f_{1/2}|^2\left[\begin{pmatrix}hX&\\cX&1\end{pmatrix}\right]dc\frac{dh}{|\det(h)|}.
\end{multline*}
Using the transformation property of $f_{1/2}$ as in \eqref{transformation-f} and making the change of variables $c\mapsto c/X$ we obtain the above is equal to
$$\int_{\R^{r-1}}|f_{1/2}|^2\left[\begin{pmatrix}\mathrm{I}_{r-1}&\\c&1\end{pmatrix}\right]\int_{N_{r-1}(\R)\backslash\GL_{r-1}(\R)}|W_{0}|^2\left[\begin{pmatrix}h&\\&1\end{pmatrix}\begin{pmatrix}\mathrm{I}_{r-1}&\\c/X&1\end{pmatrix}\right]dhdc.$$
Using the $G$-invariance of the inner product in the Whittaker model as in \S\ref{whittaker-kirillov-model} we conclude that the inner integral above is equal to
$$\int_{N_{r-1}(\R)\backslash\GL_{r-1}(\R)}|W_{0}|^2\left[\begin{pmatrix}h&\\&1\end{pmatrix}\right]dh=\int_{N_{r-1}(\R)\backslash\GL_{r-1}(\R)}|W_{0}|^2\left[\begin{pmatrix}h&\\&1\end{pmatrix}\begin{pmatrix}\mathrm{I}_{r-1}&\\c&1\end{pmatrix}\right]dh.$$
Thus reverse engineering the above manipulation with Bruhat coordinates (that is, taking $X=1$) we conclude the proof of the first two equalities. 

From the above proof we also obtain that
$$\|W_{0}f_{1/2}\|^2=\|W_{0}\|^2_{\pi_0}\int_{\R^{r-1}}\left|f_{1/2}\right|^2\left[\begin{pmatrix}\mathrm{I}_{r-1}&\\c&1\end{pmatrix}\right]dc.$$
We deduce the last equality recalling the normalizations of $W_{0}$ and $f_{1/2}$.
\end{proof}

It is known that (see \cite[Theorem 4.2]{Co}) if $\pi_0$ is cuspidal then $L(s,\pi_0\otimes\tilde{\pi}_0)$ has a simple pole at $s=1$ with residue $L(1,\pi_0,\Ad)$. Let us write
$$L(1+s,\pi_0\otimes\tilde{\pi}_0)=\frac{L(1,\pi_0,\Ad)}{s}+O_{\pi_0}(1),$$
as $s\to 0$. Thus using \eqref{degenerate-term1}, \eqref{degenerate-term2}, and Lemma \ref{archimedean-factors} we can evaluate the limit in \eqref{degenerate-terms} as
\begin{multline}\label{main-term}
    \lim_{s\to 0}\left[\langle |\phi_0|^2, \Eis(\overline{f_{1/2}}f_{1/2+s})(.x)\rangle+\langle |\phi_0|^2,\Eis(\overline{\tilde{f}_{1/2}}\tilde{f}_{1/2+s})(.x)\rangle\right]=\\
    L(1,\pi_0,\Ad)\frac{\zeta(r/2)^2}{\zeta(r)}\Psi'(f_{1/2,\infty},W_{0,\infty})+O_{\pi_0,\Phi}(1),
\end{multline}
where $\Psi'(f_{1/2,\infty},W_{0,\infty})$ is defined as
$$\partial_{s=0}\left(\Psi_\infty(\overline{f_{1/2,\infty}}f_{1/2+s,\infty}(.x_\infty),W_{0,\infty},\overline{W_{0,\infty}})- \Psi_\infty(\overline{\tilde{f}_{1/2,\infty}}\tilde{f}_{1/2+s,\infty}(.x_\infty),W_{0,\infty},\overline{W_{0,\infty}})\right).$$
Here and elsewhere in the paper we write $\partial_{s=0}$ as an abbreviation of $\left.\frac{\partial}{\partial s}\right\vert_{s=0}$.

\begin{prop}\label{technical-main-term}
We have 
$$\Psi'(f_{1/2,\infty},W_{0,\infty})=r\log X+O_{W_{0,\infty},\Phi_\infty}(1),$$
as $X$ tends to infinity.
\end{prop}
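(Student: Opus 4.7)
Write $\Psi' = A'(0) - B'(0)$, where
$A(s) := \Psi_\infty(\overline{f_{1/2,\infty}}f_{1/2+s,\infty}(.x_\infty), W_{0,\infty}, \overline{W_{0,\infty}})$
and $B(s)$ is the analogous integral with $\tilde f$ in place of $f$. Parametrize $g = \begin{pmatrix}h&\\c&1\end{pmatrix}$ with $h \in N_{r-1}\backslash\GL_{r-1}(\R)$, $c \in \R^{r-1}$, and measure $dh\,dc/|\det h|$. Using \eqref{transformation-f} and the identity $\begin{pmatrix}I&\\c&1\end{pmatrix}x = \begin{pmatrix}XI&\\&1\end{pmatrix}\begin{pmatrix}I&\\cX&1\end{pmatrix}$, one pulls out
\[
f_{1/2+s}(gx) = X^{(r-1)(1/2+s)}\,|\det h|^{1/2+s}\, f_{1/2+s}\bigl[\begin{smallmatrix}I&\\cX&1\end{smallmatrix}\bigr].
\]
For the intertwined vector $\tilde f_{1/2+s}(g) = f_{1/2-s,\hat\Phi_\infty}(wg^{-t})$, direct computation gives $\tilde f_{1/2+s}\bigl[\begin{smallmatrix}h&\\c&1\end{smallmatrix}\bigr] = \int_{\R^\times}\hat\Phi_\infty(tu^T,-t(c\cdot u))|t|^{r(1/2-s)}d^\times t$ with $u := (h^{-1})_{\cdot,1}$, and the substitution $t\mapsto t/X$ yields the parallel identity $\tilde f_{1/2+s}(gx) = X^{r(1/2-s)}\tilde f_{1/2+s}\bigl[\begin{smallmatrix}h&\\cX&1\end{smallmatrix}\bigr]$. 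Note the contrast between the prefactor exponents $(r-1)(1/2+s)$ and $r(1/2-s)$.

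\textbf{Analysis of $A(s)$.} The support of $f_{1/2+s}\bigl[\begin{smallmatrix}I&\\cX&1\end{smallmatrix}\bigr]$ localizes $c$ to $|c|\lesssim 1/X$. Substituting $c\mapsto c/X$ localizes $c$ in a compact set. At $s=0$ the $G$-invariance of the Whittaker inner product gives
\[
\int_{N_{r-1}\backslash \GL_{r-1}(\R)}|W_0|^2\bigl[\begin{smallmatrix}h&\\c'&1\end{smallmatrix}\bigr]\,dh = \|W_0\|^2 = 1
\]
for every $c'$; differentiating in $s$ introduces a $\log|\det h|$ factor whose $h$-integral against the compactly supported $|W_0|^2$ is bounded. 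Combined with the explicit $X^{(r-1)(1/2+s)}$, one finds $A(0)=O(X^{-(r-1)/2})$ and $A'(0)=O(1)$.

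\textbf{Analysis of $B(s)$ and extraction of $r\log X$.} Differentiating the prefactor $X^{r(1/2-s)}$ at $s=0$ contributes $-r\log X\cdot B(0)$, which is $o(1)$ using $B(0) = A(0) = O(X^{-(r-1)/2})$ (the equality $A(0) = B(0)$ is forced by the finiteness of the $s\to 0$ limit in \eqref{main-term}, since the simple poles of $L(1\pm s,\pi_0\otimes\tilde\pi_0)$ at $s=0$ must cancel). The main $r\log X$ contribution comes from differentiating the integrand: $\partial_s|t|^{r(1/2-s)}|_{s=0} = -r|t|^{r/2}\log|t|$. In the regime where $c\cdot u\ne 0$, the $t$-integral is concentrated near $|t|\sim 1/(X|c\cdot u|)$ (where $-tX(c\cdot u)$ lies in the support of $\hat\Phi_\infty$), so $\log|t| = -\log X - \log|c\cdot u| + O(1)$. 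Integrating the $-\log X$ piece against the bounded outer factors $|W_0|^2 \overline{\tilde f_{1/2}}/|\det h|$ produces a net contribution $-r\log X + O(1)$ to $B'(0)$, giving $\Psi' = A'(0) - B'(0) = r\log X + O(1)$.

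\textbf{Main obstacle.} The most delicate point is the coupled analysis of the $t$- and $c$-integrals in $B(s)$: the formal $t$-integrated integrand exhibits a non-integrable singularity along the hyperplane $c\cdot u = 0$ in $\R^{r-1}$ (since $r/2\ge 3/2 > 1$ for $r\ge 3$), regularized at scale $|c\cdot u|\sim 1/X$, and it is precisely this regularization that manufactures the $\log X$. Pinning down the coefficient $r$ and the $O(1)$ remainder requires the Iwasawa decomposition $\begin{pmatrix}I&\\cX&1\end{pmatrix} = n(cX)a(cX)k(cX)$ (with last diagonal entry $\sqrt{1+|cX|^2}$) to handle the full range $c\in\R^{r-1}$, together with a Tate/Mellin-type evaluation of the resulting scale-invariant integrals. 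Establishing that the finite remainders are uniformly bounded in $X$ — in particular that the $\log|c\cdot u|$ contributions in the $c$-integration assemble into $O(1)$ rather than a further $\log X$ — is the principal technical burden.
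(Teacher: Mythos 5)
Your final formula agrees with the paper, but the route you describe does not prove it: the powers of $X$ are mis-tracked in both terms, and as a consequence the allocation of the $\log X$ between the two derivatives is wrong. In the degenerate terms the \emph{whole} section $\overline{f_{1/2}}f_{1/2+s}$ is translated by $x$ (see \eqref{degenerate-term1} and \eqref{main-term}), so in Bruhat coordinates both factors contribute $|\det(hX)|$-powers; after $c\mapsto c/X$ one gets
$A(s)=X^{(r-1)s}\int_{\R^{r-1}}(\overline{f_{1/2}}f_{1/2+s})\bigl[\begin{smallmatrix}\mathrm{I}&\\c&1\end{smallmatrix}\bigr]\int|W_{0}|^2\bigl[\begin{smallmatrix}h&\\c/X&1\end{smallmatrix}\bigr]|\det h|^{s}\,dh\,dc$,
hence $A(0)=\|W_{0,\infty}f_{1/2,\infty}(.x_\infty)\|^2=1$ by Lemma \ref{archimedean-factors} --- not $O(X^{-(r-1)/2})$ --- and $A'(0)=(r-1)\log X+O(1)$, not $O(1)$ (this is Lemma \ref{easy-term}). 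Likewise your identity $\tilde f_{1/2+s}(gx)=X^{r(1/2-s)}\tilde f_{1/2+s}\bigl[\begin{smallmatrix}h&\\cX&1\end{smallmatrix}\bigr]$ drops the determinant factor $|\det(gx)|^{-(1/2-s)}=(|\det h|\,X^{r-1})^{-(1/2-s)}$ coming from \eqref{relation-f-phi}; the correct prefactor is $X^{1/2-s}$, so $B(0)=1$ as well, and the correct derivative is $B'(0)=-\log X+O(1)$ (Lemma \ref{dual-term}), not $-r\log X+O(1)$: in the paper the single $\log X$ there is the net of $+(r-1)\log X$ from the prefactor and $-r\log X$ extracted from the $\log|\det h|$-weighted integral. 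You land on $r\log X$ only because your two misallocations ($0$ and $r$, instead of $r-1$ and $1$) happen to compensate, so the argument as written is not a proof of the proposition.

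Beyond the bookkeeping, the genuinely hard part is exactly what you defer to the ``main obstacle'' paragraph: showing that in the intertwined term everything other than the single $\log X$ is $O(1)$ \emph{uniformly in} $X$. The paper does this by first passing to $\tilde W_0$ and $\hat f$ via $g\mapsto wg^{-t}$, then using the explicit Iwasawa decomposition \eqref{iwasawa-coordinate} of $\begin{pmatrix}\mathrm{I}_{r-1}&\\cX&1\end{pmatrix}$, the identity $\log|\det\tilde a(cX)|=-\tfrac{r}{2}\log(1+X^2|c|^2)=-r\log X+O_\epsilon((1+|c|)^\epsilon)$, the $G(\R)$-invariance of the Whittaker norm to decouple the $h$-integration from $c$, and the decay bound of Lemma \ref{bound-for-convergence} (this is where $\vartheta_0<1/(r^2+1)$ enters) to show that the $\log|\det a|$-weighted Whittaker integrals and the $\delta(\tilde a(cX))$-weighted $c$-integrals are uniformly bounded. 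Your heuristic --- the $t$-integral localizing at $|t|\sim 1/(X|c\cdot u|)$ and the $\log|c\cdot u|$ pieces assembling into $O(1)$ --- is the right qualitative picture, but it is precisely the claim requiring proof: the localization fails when $X|c\cdot u|$ is not large compared with $|u|$, and the ``bounded outer factors'' themselves carry $X$-, $h$- and $c$-dependence, so the uniform $O(1)$ remainder (and hence the coefficient $r$) is not established in your sketch.
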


Proposition \ref{technical-main-term} follows immediately from the following Lemma \ref{dual-term}, Lemma \ref{easy-term}, and Lemma \ref{archimedean-factors}. Again to ease the notations we drop $\infty$ subscripts from the proofs of the next two lemmata.

\begin{lemma}\label{dual-term}
We have
\begin{equation*}
    \partial_{s=0}\Psi_\infty(\overline{\tilde{f}_{1/2,\infty}}\tilde{f}_{1/2+s,\infty}(.x_\infty),W_{0,\infty},\overline{W_{0,\infty}})=-\log X\|W_{0,\infty}f_{1/2,\infty}\|^2+O_{W_{0,\infty},\Phi_\infty}(1),
\end{equation*}
as $X$ tends to infinity.
\end{lemma}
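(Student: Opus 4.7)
The plan is to exploit the relation $\tilde f_{s'}(g) = \hat f_{1-s'}(wg^{-t})$ from \eqref{intertwined-vector} in order to convert the problem into one involving $\hat f$ and the contragredient Whittaker function $\tilde W_{0,\infty}$. Using the measure-preserving change of variables $g \mapsto wg^{-t}$ on $N(\R)\backslash G(\R)$ (valid since the Haar measure on $G(\R)$ is transpose- and inversion-invariant, and the map sends left $N$-cosets to left $N$-cosets via $N$-conjugation by the long Weyl element), I rewrite
\[
\Psi_\infty(\overline{\tilde f_{1/2,\infty}}\tilde f_{1/2+s,\infty}(.x_\infty), W_{0,\infty}, \overline{W_{0,\infty}}) = \int_{N(\R)\backslash G(\R)}|\tilde W_{0,\infty}(g)|^2\,\overline{\hat f_{1/2,\infty}(gx_\infty^{-1})}\hat f_{1/2-s,\infty}(gx_\infty^{-1})\,dg.
\]
This transports the analytic difficulty from the non-compactly-supported intertwined vector $\tilde f$ onto the less-concentrated $\hat f$, which in Bruhat coordinates is no longer compactly supported in $c$ — reflecting the fact that $\hat f$ is essentially the Fourier transform of $f$, as emphasized in the paper's introduction.

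In Bruhat coordinates $g = \diag(h,1)y_c$ one has $gx_\infty^{-1} = \diag(hX^{-1},1)y_{c/X}$, so the transformation property \eqref{transformation-f} gives $\hat f_{s'}(gx_\infty^{-1}) = X^{-(r-1)s'}|\det h|^{s'}\hat f_{s'}(y_{c/X})$. Rescaling $c \mapsto cX$ and invoking the Kirillov invariance $\int|\tilde W_{0,\infty}|^2[\diag(h,1)g]\,dh = \|\tilde W_{0,\infty}\|^2$ (independent of $g \in G(\R)$) together with $\int|\hat f_{1/2,\infty}(y_c)|^2\,dc = 1$ (which follows from $\|\tilde W_{0,\infty}\hat f_{1/2,\infty}\|^2 = \|W_{0,\infty} f_{1/2,\infty}\|^2 = 1$ by the same Whittaker--Plancherel and local functional equation argument used in the proof of Lemma \ref{archimedean-factors}), I extract an explicit $X^{(r-1)s}$ prefactor and express the zeta integral as $X^{(r-1)s}$ times a concrete $s$-dependent integral over $(h, c)$ in which the Whittaker function is evaluated at $\diag(h,1)y_{cX}$.

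Differentiating at $s = 0$ identifies three sources of potential $\log X$ behavior: (i) the extracted $X^{(r-1)s}$ prefactor; (ii) the $s$-derivative of a residual $|\det h|^{-s}$ weight that appears after the rescaling; (iii) the $s$-derivative of $\hat f_{1/2-s}(y_c)$ integrated against a bounded kernel. Contribution (iii) is $X$-independent, hence $O(1)$. The core of the analysis is the Iwasawa decomposition of $\begin{pmatrix}I & 0\\cX & 1\end{pmatrix}$: for $|cX| \gg 1$ the Iwasawa diagonal is dominated by $a_r \sim \sqrt{1 + |cX|^2}$ with the remaining $a_j$ constrained by $\prod_j a_j = 1$, while for $|cX| \lesssim 1$ the matrix is a small perturbation of the identity. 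Using this in conjunction with the approximate invariance of $\tilde W_{0,\infty}$ on the congruence subgroup $K_0(X,\tau)$ from \S\ref{analytic-newvector}, and the rapid Whittaker decay of $\tilde W_{0,\infty}$ off its Kirillov support, I combine (i) and (ii) to produce the main term $-\log X \cdot \|W_{0,\infty} f_{1/2,\infty}\|^2$.

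The main obstacle will be the precise Iwasawa--Kirillov analysis in the transition region $|cX|\sim 1$, where neither the compactness of Kirillov support nor the rapid Whittaker decay dominates and both regimes contribute $\log X$-scale terms whose coefficients must be tracked carefully. A further subtlety is that the natural substitution $h \mapsto hX$ (used to recentre the Kirillov support after the Whittaker argument is shifted by $x_\infty$) introduces an additional $X^{(r-1)s}$ factor which partially cancels against the prefactor already extracted; this cancellation is precisely what converts the naive $+(r-1)\log X$ contribution into the stated $-\log X$ coefficient, and verifying that the remaining error terms from the transition region and from the derivative of $\hat f_{1/2-s}$ do not disrupt this cancellation is the technical heart of the argument.
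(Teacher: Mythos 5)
Your skeleton coincides with the paper's: the change of variables $g\mapsto wg^{-t}$ to pass to $\hat f$ and $\tilde W_{0,\infty}$, the Bruhat coordinates, the rescaling $c\mapsto cX$ extracting $X^{(r-1)s}$, and the three-term differentiation at $s=0$ are exactly the steps taken there, and your term (iii) is indeed $X$-independent (though to call it $O(1)$ you still need the decay $\overline{\hat f_{1/2}}\hat f_{1/2-s}\left[\begin{pmatrix}\mathrm{I}_{r-1}&\\c&1\end{pmatrix}\right]\ll (1+|c|^2)^{-\frac r2(1+\Re(s))}$ coming from the Iwasawa decomposition, since $\hat f$ is not compactly supported in $c$). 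The genuine gap is at the step that produces the stated coefficient. The prefactor contributes $+(r-1)\log X\,\|W_{0,\infty}f_{1/2,\infty}\|^2$, so one must show that the $\log|\det h|$ term contributes $-r\log X\,\|W_{0,\infty}f_{1/2,\infty}\|^2+O(1)$. In the paper this comes from the explicit Iwasawa decomposition \eqref{iwasawa-coordinate}: writing $\begin{pmatrix}\mathrm{I}_{r-1}&\\cX&1\end{pmatrix}=\tilde z\tilde n\begin{pmatrix}\tilde a(cX)&\\&1\end{pmatrix}\tilde k$ and shifting $a\mapsto a\,\tilde a(cX)^{-1}$, the weight produces $\log|\det\tilde a(cX)|=-\tfrac r2\log(1+X^2|c|^2)=-r\log X+O(\log(1+|c|^2))$ after integration, while the leftover piece weighted by $\log|\det a|$ must be shown to be $O(1)$ uniformly in $cX$ via the $\vartheta_0$-tempered decay bound of Lemma \ref{bound-for-convergence} (note that $\tilde W_{0,\infty}$, unlike $W_{0,\infty}$, is not compactly supported in the Kirillov model, so "rapid decay off the Kirillov support" is not available for free). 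Your proposal instead attributes the conversion of $+(r-1)\log X$ into $-\log X$ to a substitution $h\mapsto hX$ "introducing an additional $X^{(r-1)s}$ factor which partially cancels the prefactor". That bookkeeping cannot give the answer: substituting $h\mapsto hX$ in the residual weight $|\det h|^{-s}$ yields $X^{-(r-1)s}$, which would cancel the prefactor completely and give coefficient $0$, while an extra $X^{+(r-1)s}$ would give $+2(r-1)\log X$; neither yields $-1$. The correct exponent is $r$, not $r-1$, and it arises from $\det\tilde a(cX)\asymp X^{-r}$, i.e.\ from the Iwasawa $A$-part of $\begin{pmatrix}\mathrm{I}_{r-1}&\\cX&1\end{pmatrix}$ --- a computation your sketch never carries out.

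A secondary misstep: the appeal to approximate invariance under $K_0(X,\tau)$ cannot work here, because after the rescaling the Whittaker function is evaluated at $\begin{pmatrix}h&\\cX&1\end{pmatrix}$ with lower-left entry of size up to $\asymp X$, far outside the range $|c|\le\tau/X$ where analytic-newvector invariance applies; this is precisely the difficulty the introduction flags for the intertwined term. What is actually used is the exact $G(\R)$-invariance of the Whittaker-model inner product, $\int|\tilde W_{0,\infty}|^2\left[\begin{pmatrix}h&\\&1\end{pmatrix}g\right]dh=\|\tilde W_{0,\infty}\|^2$ for every fixed $g$, to evaluate the unweighted inner integrals exactly, plus Lemma \ref{bound-for-convergence} for the $\log|\det a|$-weighted one. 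So your route is the paper's route in outline, but as written it neither performs the key Iwasawa computation nor controls the leftover weighted integral, and the cancellation mechanism offered in their place is incorrect.
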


\begin{proof}
We start by make the change of variables $g\mapsto wg^{-t}$ in the zeta integral to write
$$\Psi(\overline{\tilde{f}_{1/2}}\tilde{f}_{1/2+s}(.x),W_0,\overline{W_0})=\Psi(\overline{\hat{f}_{1/2}}\hat{f}_{1/2-s}(.x^{-1}),\tilde{W}_0,\overline{\tilde{W}_0}).$$ 
We use Bruhat coordinates as in the proof of Lemma \ref{archimedean-factors} to write the above zeta integral as
$$\int_{N_{r-1}(\R)\backslash\GL_{r-1}(\R)}\int_{\R^{r-1}}|\tilde{W}_{0}|^2\left[\begin{pmatrix}h&\\c&1\end{pmatrix}\right]\overline{\hat{f}_{1/2}}\hat{f}_{1/2-s}\left[\begin{pmatrix}h/X&\\c/X&1\end{pmatrix}\right]dc\frac{dh}{|\det(h)|}.$$
Again as in the proof of Lemma \ref{archimedean-factors} we use the transformation property of $\overline{\hat{f}_{1/2}}\hat{f}_{1/2-s}$ as in \eqref{transformation-f} and make the change of variables $c\mapsto cX$ to obtain the above is equal to 
$$X^{(r-1)s}\int_{\R^{r-1}}\overline{\hat{f}_{1/2}}\hat{f}_{1/2-s}\left[\begin{pmatrix}\mathrm{I}_{r-1}&\\c&1\end{pmatrix}\right]\int_{N_{r-1}(\R)\backslash\GL_{r-1}(\R)}|\tilde{W}_{0}|^2\left[\begin{pmatrix}h&\\cX&1\end{pmatrix}\right]|\det(h)|^{-s}dhdc.$$
Differentiating at $s=0$ we obtain the above is equal to
\begin{multline}\label{s-derivative}
    (r-1)\log X\int_{\R^{r-1}}|\hat{f}_{1/2}|^2\left[\begin{pmatrix}\mathrm{I}_{r-1}&\\c&1\end{pmatrix}\right]\int_{N_{r-1}(\R)\backslash\GL_{r-1}(\R)}|\tilde{W}_{0}|^2\left[\begin{pmatrix}h&\\cX&1\end{pmatrix}\right]dhdc\\
    +\int_{\R^{r-1}}\overline{\hat{f}_{1/2}}\partial_{s=0}\hat{f}_{1/2-s}\left[\begin{pmatrix}\mathrm{I}_{r-1}&\\c&1\end{pmatrix}\right]\int_{N_{r-1}(\R)\backslash\GL_{r-1}(\R)}|\tilde{W}_{0}|^2\left[\begin{pmatrix}h&\\cX&1\end{pmatrix}\right]dhdc\\
    -\int_{\R^{r-1}}|\hat{f}_{1/2}|^2\left[\begin{pmatrix}\mathrm{I}_{r-1}&\\c&1\end{pmatrix}\right]\int_{N_{r-1}(\R)\backslash\GL_{r-1}(\R)}|\tilde{W}_{0}|^2\left[\begin{pmatrix}h&\\cX&1\end{pmatrix}\right]\log|\det(h)|dhdc.
\end{multline}

The first summand in \eqref{s-derivative} is easy to understand. Using the invariance of the unitary product exactly as in the proof of Lemma \ref{archimedean-factors} we can yield the first summand is equal to
$$(r-1)\log X\|\tilde{W}_0\|_{\tilde{\pi}_0}\|f_{1/2}\|^2.$$
From the Whittaker--Plancherel expansion \eqref{whittaker-plancherel}, the $\GL(r)\times\GL(r-1)$ local functional equation (see \cite[Proposition 3.2]{Co}), and the unitarity of the $\gamma$-factor, as in the proof of Lemma \ref{archimedean-factors} one also gets that $\|\tilde{W}_0\|_{\tilde{\pi}_0}=\|W_0\|_{\pi_0}$.

We claim that the second summand in \eqref{s-derivative} is of bounded size. Note that again the invariance of the unitary inner product implies that the inner integral is equal to $\|\tilde{W}_0\|^2_{\tilde{\pi}_0}$. Thus using Cauchy's integral formula we can write the second summand as
$$\|\tilde{W}_0\|^2_{\tilde{\pi}_0}\int_{|s|=\epsilon}\frac{1}{s^2}\int_{\R^{r-1}}\overline{\hat{f}_{1/2}}\hat{f}_{1/2-s}\left[\begin{pmatrix}\mathrm{I}_{r-1}&\\c&1\end{pmatrix}\right]dc\frac{ds}{\pi i},$$
for some sufficiently small $\epsilon>0$. To show that the above integrals converge we start with the Iwasawa decomposition of $\begin{pmatrix}\mathrm{I}_{r-1}&\\c&1\end{pmatrix}$. One can check by induction or otherwise that there exists a $\tilde{z}(c)\in\R^{\times}$ so that
\begin{multline}\label{iwasawa-coordinate}
    \begin{pmatrix}\mathrm{I}_{r-1}&\\c&1\end{pmatrix}=\tilde{z}(c)\tilde{n}(c)\begin{pmatrix}\tilde{a}(c)&\\&1\end{pmatrix}\tilde{k}(c);\quad \tilde{n}(c)\in N(\R),\tilde{k}(c)\in K_\infty,\\\tilde{a}(c):=\diag(a_1(c),\dots,a_{r-1}(c));\quad a_i(c):=\frac{\sqrt{1+c_1^2+\dots+c_{i-1}^2}}{\sqrt{1+c_1^2+\dots+c_i^2}\sqrt{1+|c|^2}}.
\end{multline}
Thus using the transformation property \eqref{transformation-f} we get
$$\overline{\hat{f}_{1/2}}\hat{f}_{1/2-s}\left[\begin{pmatrix}\mathrm{I}_{r-1}&\\c&1\end{pmatrix}\right]\le (1+|c|^2)^{-r/2(1+\Re(s))}\|\hat{f}_{1/2}\hat{f}_{1/2-s}\|_{L^\infty(K_\infty)}.$$
Thus the second summand of \eqref{s-derivative} is bounded by
$$\ll_{\epsilon, f, W_0}\int_{\R^{r-1}}(1+|c|^2)^{-r/2(1-\epsilon)}dc.$$
The above integral is convergent for sufficiently small $\epsilon$.

We now focus on the third summand in \eqref{s-derivative}. In the inner integral we use Iwasawa coordinates for $h=ak$, move the $K$-integral outside, and make the change of variables $c\mapsto ck$ to rewrite it as
\begin{multline*}
    -\int_{\mathrm{O}_{r-1}(\R)}\int_{\R^{r-1}}|\hat{f}_{1/2}|^2\left[\begin{pmatrix}\mathrm{I}_{r-1}&\\ck&1\end{pmatrix}\right]\\\int_{A_{r-1}(\R)}|\tilde{W}_{0}|^2\left[\begin{pmatrix}a&\\&1\end{pmatrix}\begin{pmatrix}\mathrm{I}_{r-1}&\\cX&1\end{pmatrix}\begin{pmatrix}k&\\&1\end{pmatrix}\right]\log|\det(a)|\frac{da}{\delta(a)}dcdk
\end{multline*}
We use the Iwasawa decomposition of $\begin{pmatrix}\mathrm{I}_{r-1}&\\cX&1\end{pmatrix}$ as in \eqref{iwasawa-coordinate} to write it as $\tilde{n}(cX)\begin{pmatrix}\tilde{a}(cX)&\\&1\end{pmatrix}\tilde{k}(cX)$. Then using the left $N(\R)$-invariance of $|\tilde{W}_0|^2$ and changing varible $a\mapsto a\times \tilde{a}(cX)^{-1}$ we obtain that the above quantity is equal to
\begin{multline*}
    \int_{\mathrm{O}_{r-1}(\R)}\int_{\R^{r-1}}|\hat{f}_{1/2}|^2\left[\begin{pmatrix}\mathrm{I}_{r-1}&\\ck&1\end{pmatrix}\right]\log|\det(\tilde{a}(cX))|\delta(\tilde{a}(cX))\\\int_{A_{r-1}(\R)}|\tilde{W}_{0}|^2\left[\begin{pmatrix}a&\\&1\end{pmatrix}\tilde{k}(cX)\begin{pmatrix}k&\\&1\end{pmatrix}\right]\frac{da}{\delta(a)}dcdk\\
    -\int_{\mathrm{O}_{r-1}(\R)}\int_{\R^{r-1}}|\hat{f}_{1/2}|^2\left[\begin{pmatrix}\mathrm{I}_{r-1}&\\ck&1\end{pmatrix}\right]\delta(\tilde{a}(cX))\\\int_{A_{r-1}(\R)}|\tilde{W}_{0}|^2\left[\begin{pmatrix}a&\\&1\end{pmatrix}\tilde{k}(cX)\begin{pmatrix}k&\\&1\end{pmatrix}\right]\log|\det(a)|\frac{da}{\delta(a)}dcdk.
\end{multline*}
We write the above as $A-B$ where $A$ denotes the first term and $B$ denotes the second term above. To analyze $A$ we reverse engineer the above process: make the change of variables $a\mapsto a\times \tilde{a}(cX)$, use the left $N(\R)$-invariance of $|\tilde{W}_0|^2$, and make the change of variables $c\mapsto ck^{-1}$ to obtain
\begin{multline*}
    A=\int_{\R^{r-1}}|\hat{f}_{1/2}|^2\left[\begin{pmatrix}\mathrm{I}_{r-1}&\\c&1\end{pmatrix}\right]\int_{\mathrm{O}_{r-1}(\R)}\log|\det(a(ck^{-1}X))|\\\int_{A_{r-1}(\R)}|\tilde{W}_{0}|^2\left[\begin{pmatrix}ak&\\&1\end{pmatrix}\begin{pmatrix}\mathrm{I}_{r-1}&\\cX&1\end{pmatrix}\right]\frac{da}{\delta(a)}dkdc.
\end{multline*}
But $\det(\tilde{a}(cX))=(1+X^2|c|^2)^{-r/2}=\det(a(ck^{-1}X))$ for all $k\in\mathrm{O}_{r-1}(\R)$. Using that we can move the integral over $\mathrm{O_{r-1}(\R)}$ to couple with the integral over $A_{r-1}(\R)$ to obtain an integral over $N_{r-1}(\R)\backslash\GL_{r-1}(\R)$. Then once again appealing to the invariance of the unitary product we obtain
$$A=\|\tilde{W}_0\|^2_{\tilde{\pi}_0}\int_{\R^{r-1}}|\hat{f}_{1/2}|^2\left[\begin{pmatrix}\mathrm{I}_{r-1}&\\c&1\end{pmatrix}\right]\log|\det(\tilde{a}(cX))|dc.$$
Note that
\begin{align*}
    &\log|\det(\tilde{a}(cX))|=-\frac{r}{2}\log(1+X^2|c|^2)\\
    &=-\frac{r}{2}\log(1+X^2)+O(\log(1+|c|^2))=-r\log X+O_\epsilon((1+|c|)^\epsilon).
\end{align*}
Using the Iwasawa decomposition and transformation property of $\hat{f}_{1/2}$ as in \eqref{transformation-f}, similar to the second case we obtain
\begin{align*}
    &A+r\log X\|\tilde{W}_0\|^2_{\tilde{\pi}_0}\int_{\R^{r-1}}|\hat{f}_{1/2}|^2\left[\begin{pmatrix}\mathrm{I}_{r-1}&\\c&1\end{pmatrix}\right]dc\\
    &\ll_{\tilde{W}_0,\epsilon}\|\hat{f}_{1/2}\|^2_{L^\infty(K_\infty)}\int_{\R^{r-1}}(1+|c|^2)^{-r/2+\epsilon}dc\ll_{\Phi,\tilde{W}_0}1.
\end{align*}
Working as in the proof of Lemma \ref{archimedean-factors} we check that
$$\|\tilde{W}_0\|^2_{\tilde{\pi}_0}\int_{\R^{r-1}}|\hat{f}_{1/2}|^2\left[\begin{pmatrix}\mathrm{I}_{r-1}&\\c&1\end{pmatrix}\right]dc=\|W_0f_{1/2}\|^2.$$
Thus we obtain
\begin{equation}
    A=-r\log X\|W_0f_{1/2}\|^2+O_{W_0,f}(1).
\end{equation}

Now we prove that $B$ is of bounded size.
To prove that we first claim that 
$$\int_{A_{r-1}(\R)}|\tilde{W}_{0}|^2\left[\begin{pmatrix}a&\\&1\end{pmatrix}\tilde{k}(cX)\begin{pmatrix}k&\\&1\end{pmatrix}\right]\log|\det(a)|\frac{da}{\delta(a)}\ll_{\tilde{W}_0}1,$$
uniformly in $c$. We assume the claim. Now note that
$$\delta(\tilde{a}(cX))=\frac{(1+X^2|c|^2)^{r/2-1}}{\prod_{i=1}^{r-2}(1+c_1^2X^2+\dots+c_i^2X^2)}\ll \frac{X^{r-2}(1+|c|^2)^{r/2-1}}{\prod_{i=1}^{r-2}(1+c_i^2X^2)}.$$
We use the Iwasawa decomposition and work as before. Using transformation of $\hat{f}_{1/2}$ as in \eqref{transformation-f} we thus get
\begin{align*}
    B&\ll_{W_0,f} \int_{\mathrm{O}_{r-1}(\R)}\int_{\R^{r-1}}(1+|ck|^2)^{-r/2}\frac{X^{r-2}(1+|c|^2)^{r/2-1}}{\prod_{i=1}^{r-2}(1+c_i^2X^2)}dcdk\\
    &\ll \int_{\R^{r-1}}\prod_{i=1}^{r-1}(1+c_i^2)^{-1}dc,
\end{align*}
which we obtain by noting that $|ck|=|c|$ for $k\in\mathrm{O}_{r-1}(\R)$ and making the change of variables $c_i\mapsto c_i/X$ for $i\le r-2$. It is easy to see that the above integral is convergent, which yields that
$$B=O_{W_0,f}(1).$$

Now to prove the claim above let $\omega:=\tilde{k}(cX)\begin{pmatrix}k&\\&1\end{pmatrix}\in K_\infty$ implicitly depending on $cX$.
Note that from Lemma \ref{bound-for-convergence} we get that
$$\tilde{\pi}_0(\omega)\tilde{W}_0\left[\begin{pmatrix}a(y)&\\&1\end{pmatrix}\right]\ll_{\epsilon,M,W_0}\delta^{1/2-\epsilon}(a(y))|\det(a(y))|^{1/2-\vartheta_0-\epsilon}\prod_{i=1}^{r-1}\min(1,|y_i|^{-M}).$$
Thus we obtain
\begin{align*}
    &\int_{A_{r-1}(\R)}|\tilde{\pi}_0(\omega)\tilde{W}_{0}|^2\left[\begin{pmatrix}a&\\&1\end{pmatrix}\right]\log|\det(a)|\frac{da}{\delta(a)}\\
    &\ll_{W_0,\eta,M}\int_{(\R^\times)^ {r-1}}\prod_{i=1}^{r-1}\min(1,|y_i|^{-M})|\det(a(y))|^{1-2\vartheta_0-\epsilon}(|\det(a(y))|^\epsilon+|\det(a(y))|^{-\epsilon})\prod_{i}d^\times y_i.
\end{align*}
Employing the bound of $\vartheta_0$ from the statement of Theorem \ref{main-theorem} we check that the above integral is convergent for large enough $M$ and sufficiently small $\epsilon>0$, which yields the claim. 
\end{proof}

\begin{remark}\label{explicate-constant-term-local}
In the very last estimate of the proof of Lemma \ref{dual-term} we can only prove that the integral of the Whittaker function is of bounded size. It is not clear to us if or how one can improve the estimate to be a constant plus a power saving error term. This would potentially explicate the constant term of the asymptotic expansion in Theorem \ref{main-theorem} with a power saving error term; see Remark \ref{explicate-constant-term}.
\end{remark}

\begin{lemma}\label{easy-term}
We have
\begin{equation*}
\partial_{s=0}\Psi_\infty(\overline{f_{1/2,\infty}}f_{1/2+s,\infty}(.x_\infty),W_{0,\infty},\overline{W_{0,\infty}})=(r-1)\log X\|W_{0,\infty}f_{1/2,\infty}\|^2+O_{W_{0,\infty},\Phi_\infty}(1),
\end{equation*}
as $X$ tends to infinity.
\end{lemma}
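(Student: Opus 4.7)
The plan is to parallel the strategy of Lemma \ref{dual-term} but exploit the crucial simplification that, by the choice of $\Phi_\infty$ in \S\ref{local-choice}, the function $f_{1/2,\infty}\left[\begin{pmatrix}\mathrm{I}_{r-1}&\\c&1\end{pmatrix}\right]$ is compactly supported in $c$. I drop the $\infty$ subscripts throughout.

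First I would unfold in Bruhat coordinates. Writing $g=\begin{pmatrix}h&\\c&1\end{pmatrix}$ and factoring
$\begin{pmatrix}h&\\c&1\end{pmatrix}x=\begin{pmatrix}hX&\\&1\end{pmatrix}\begin{pmatrix}\mathrm{I}_{r-1}&\\cX&1\end{pmatrix}$, the transformation property \eqref{transformation-f} yields
\begin{equation*}
\Psi(\overline{f_{1/2}}f_{1/2+s}(.x),W_0,\overline{W_0})=X^{(r-1)(1+s)}\int\int|W_0|^2\left[\begin{pmatrix}h&\\c&1\end{pmatrix}\right]|\det h|^s\,\overline{f_{1/2}}f_{1/2+s}\left[\begin{pmatrix}\mathrm{I}_{r-1}&\\cX&1\end{pmatrix}\right]dc\,dh.
\end{equation*}
After the change of variable $c\mapsto c/X$ this becomes
\begin{equation*}
X^{(r-1)s}\int_{\R^{r-1}}\overline{f_{1/2}}f_{1/2+s}\left[\begin{pmatrix}\mathrm{I}_{r-1}&\\c&1\end{pmatrix}\right]\int_{N_{r-1}(\R)\backslash\GL_{r-1}(\R)}|W_0|^2\left[\begin{pmatrix}h&\\c/X&1\end{pmatrix}\right]|\det h|^s\,dh\,dc.
\end{equation*}
Differentiating at $s=0$ produces three contributions: the main term $(r-1)\log X$ times the value of the double integral at $s=0$, plus a term involving $\partial_{s=0}f_{1/2+s}$, plus a term involving $\log|\det h|$.

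For the main term, the inner $h$-integral at $s=0$ is just $\|\pi_0\!\left(\begin{pmatrix}\mathrm{I}_{r-1}&\\c/X&1\end{pmatrix}\right)W_0\|_{\pi_0}^2=\|W_0\|_{\pi_0}^2$ by $G(\R)$-invariance of the Whittaker inner product (\S\ref{whittaker-kirillov-model}), and the final identity in the proof of Lemma \ref{archimedean-factors} then collapses the remaining $c$-integral to $\|W_0 f_{1/2}\|^2$; this gives exactly $(r-1)\log X\,\|W_0 f_{1/2}\|^2$. For the $\partial_{s=0}f_{1/2+s}$ term, the same invariance makes the inner integral equal to $\|W_0\|_{\pi_0}^2$, and because $\partial_{s=0}f_{1/2+s}\left[\begin{pmatrix}\mathrm{I}_{r-1}&\\c&1\end{pmatrix}\right]$ inherits compact support in $c$ from $\Phi_\infty$ (via \eqref{relation-f-phi}), the integral is trivially $O_{W_0,\Phi}(1)$.

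The only genuinely nontrivial piece is the $\log|\det h|$ term
$$\int_{\R^{r-1}}|f_{1/2}|^2\left[\begin{pmatrix}\mathrm{I}_{r-1}&\\c&1\end{pmatrix}\right]\int_{N_{r-1}(\R)\backslash\GL_{r-1}(\R)}|W_0|^2\left[\begin{pmatrix}h&\\c/X&1\end{pmatrix}\right]\log|\det h|\,dh\,dc.$$
Here the weight $\log|\det h|$ destroys the unitary invariance used above, so I will Iwasawa-decompose $h=ak$ and apply the Whittaker bound from Lemma \ref{bound-for-convergence} to the translate $\pi_0\!\left(\begin{pmatrix}\mathrm{I}_{r-1}&\\c/X&1\end{pmatrix}\right)W_0$, as is done in the third-summand analysis of the proof of Lemma \ref{dual-term}; the implied constants are uniform in $c/X$ (which ranges over a bounded set since $c$ lies in the compact support of $f_{1/2}$) and the resulting $a$-integral converges for $M$ large and $\epsilon$ small under the assumption $\vartheta_0<1/(r^2+1)$. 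The outer $c$-integral then contributes a bounded factor because $|f_{1/2}|^2$ is compactly supported in $c$, and the whole term is $O_{W_0,\Phi}(1)$. Assembling the three pieces proves the lemma. I expect this last step to be the only real obstacle, but it is markedly easier than in Lemma \ref{dual-term}, where $|\hat{f}_{1/2}|^2$ decays only polynomially in $c$ and a more delicate cancellation argument was required.
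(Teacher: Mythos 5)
Your proposal is correct and follows essentially the same route as the paper: the same Bruhat-coordinate unfolding and change of variables $c\mapsto c/X$, the same three-term differentiation at $s=0$, the main term recovered via invariance of the Whittaker norm together with the identity at the end of the proof of Lemma \ref{archimedean-factors}, and the $\log|\det h|$ term bounded using the compact support of $f_{1/2}$ in $c$, the Iwasawa decomposition $h=ak$ with $\begin{pmatrix}k&\\c/X&1\end{pmatrix}$ in a fixed compact set, and Lemma \ref{bound-for-convergence}. The only (harmless) deviation is your handling of the $\partial_{s=0}f_{1/2+s}$ term directly from the compact support of $\Phi_\infty$, where the paper instead recycles the Cauchy-integral-formula argument from Lemma \ref{dual-term}; both are valid.
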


\begin{proof}
The proof of this lemma is very similar to (and easier than) the proof of Lemma \ref{dual-term}. We first write $\Psi(\overline{f_{1/2}}f_{1/2+s}(.x),W_{0},\overline{W_{0}})$ as
$$X^{(r-1)s}\int_{N_{r-1}(\R)\backslash\GL_{r-1}(\R)}\int_{\R^{r-1}}|{W}_{0}|^2\left[\begin{pmatrix}h&\\c/X&1\end{pmatrix}\right]\overline{{f}_{1/2}}{f}_{1/2+s}\left[\begin{pmatrix}\mathrm{I}_{r-1}&\\c&1\end{pmatrix}\right]dc|\det(h)|^s{dh}.$$
Note that the $s=0$ derivative in the the statement of this lemma can be computed exactly same as we did in the calculation of \eqref{s-derivative} of Lemma \ref{dual-term} and can be seen equal to
\begin{multline}\label{s-derivative-easy}
    (r-1)\log X\int_{\R^{r-1}}|{f}_{1/2}|^2\left[\begin{pmatrix}\mathrm{I}_{r-1}&\\c&1\end{pmatrix}\right]\int_{N_{r-1}(\R)\backslash\GL_{r-1}(\R)}|{W}_{0}|^2\left[\begin{pmatrix}h&\\c/X&1\end{pmatrix}\right]dhdc\\
    +\int_{\R^{r-1}}\overline{{f}_{1/2}}\partial_{s=0}{f}_{1/2+s}\left[\begin{pmatrix}\mathrm{I}_{r-1}&\\c&1\end{pmatrix}\right]\int_{N_{r-1}(\R)\backslash\GL_{r-1}(\R)}|{W}_{0}|^2\left[\begin{pmatrix}h&\\c/X&1\end{pmatrix}\right]dhdc\\
    +\int_{\R^{r-1}}|{f}_{1/2}|^2\left[\begin{pmatrix}\mathrm{I}_{r-1}&\\c&1\end{pmatrix}\right]\int_{N_{r-1}(\R)\backslash\GL_{r-1}(\R)}|{W}_{0}|^2\left[\begin{pmatrix}h&\\c/X&1\end{pmatrix}\right]\log|\det(h)|dhdc.
\end{multline}
Exactly as in the proof of Lemma \ref{dual-term}, we can check (e.g.\ changing $\hat{f}_{1/2-s}$ to $f_{1/2+s}$ and $\tilde{W}_0$ to $W_0$) that the first and second summands in \eqref{s-derivative-easy} are
$$(r-1)\log X\|W_0f_{1/2}\|^2$$
and $O_{W_0,f}(1)$, respectively. We claim that the third summand in \eqref{s-derivative-easy} is also $O_{W_0,f}(1)$, which yields the lemma.

From the relation between $f$ and $\Phi$ from \eqref{relation-f-phi} we write
$$f_{1/2}\left[\begin{pmatrix}\mathrm{I}_{r-1}&\\c&1\end{pmatrix}\right]=\int_{\R^\times}\Phi(t(c,1))|t|^{r/2}d^\times t.$$
Recall the choice of $\Phi$ in \S\ref{local-choice}. Support of $\Phi$ being on $B_\tau(0,\dots,0,1)$ implies that in the above integral $t\asymp 1$ and hence $c\ll 1$. Below we show that
$$\int_{N_{r-1}(\R)\backslash\GL_{r-1}(\R)}|{W}_{0}|^2\left[\begin{pmatrix}h&\\c/X&1\end{pmatrix}\right]\log|\det(h)|dh\ll_{W_0} 1,$$
which clearly implies our claim above. 

We write $h=ak$ in Iwasawa coordinates and let $\omega:=\begin{pmatrix}k&\\c/X&1\end{pmatrix}$. Note that as $k\in\mathrm{O}_{r-1}(\R)$ and $c/X\ll 1$ there exists a fixed compact set $\Omega\in G(\R)$ such that $\omega\in \Omega$ for all relevant $c$ and $k$. Thus it is enough to show that
$$\int_{A_{r-1}(\R)}|\pi_0(\omega)W_0|^2\left[\begin{pmatrix}a&\\&1\end{pmatrix}\right]\log|\det(a)|\frac{da}{\delta(a)}\ll_{W_0,\Omega} 1.$$
This can be done similarly as we did at the end of the proof of Lemma \ref{dual-term}.
\end{proof}

\section{Analysis of the Regularized Term in the Period Side}\label{analysis-regularized-period}
Let $s\in\C$ with sufficiently small $\Re(s)$. Recall the regularized Eisenstein series
$\tilde{E}_s$ from \eqref{regularized-eisenstein-series}. The main proposition of this section is the following.
\begin{prop}\label{regularized-period}
$\tilde{E}_s$ is holomorphic in a sufficiently small neighbourhood of $s=0$ and is integrable on $\X$.
\end{prop}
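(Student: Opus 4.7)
The plan is to prove both claims by analyzing the Fourier expansion along $N$ furnished by Proposition \ref{fourier-expansion-eisenstein-series}: the subtraction in \eqref{regularized-eisenstein-series} is designed to remove precisely the pieces of the product $\overline{\Eis(f_{1/2})}\Eis(f_{1/2+s})$ that fail to be integrable on a Siegel domain \eqref{siegel-domain}. Since the non-degenerate Fourier coefficients $W^i_{f_s}(qy_i,k)$ with $q\neq 0$ decay rapidly in the Iwasawa variable $y_i$, the $L^1$-question for each of the three Eisenstein-type objects reduces to controlling the $q=0$ degenerate coefficients, whose growth is dictated by the explicit exponents $\prod_{j=1}^{i-1}|y_j|^{sj}\prod_{j=i}^{r-1}|y_j|^{(1-s)(r-j)}$ recorded in Proposition \ref{fourier-expansion-eisenstein-series}.

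First I would multiply out the Fourier expansions of $\overline{\Eis(f_{1/2})}$ and $\Eis(f_{1/2+s})$, producing $(i,i')\in\{1,\dots,r\}^2$ cross terms of degenerate parts together with all mixed terms carrying at least one non-degenerate factor. A direct Siegel-domain exponent check against the invariant measure on $\X$ shows that, for $r\ge 3$, only two of these cross terms lie at or above the $L^1$-threshold: the $(r,r)$-term $\overline{f_{1/2}}f_{1/2+s}\in\I_{r-1,1}(1+s)$ and the $(1,1)$-term $\overline{\tilde f_{1/2}}\tilde f_{1/2+s}\in\I_{1,r-1}(1-s)$, where we use the identification $M_1^0 f_s=\tilde f_s$ from \eqref{intertwined-vector}. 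All other cross terms, as well as any term containing a non-degenerate factor, lie in $L^1(\X)$. I would then apply Proposition \ref{fourier-expansion-eisenstein-series} to $\Eis(\overline{f_{1/2}}f_{1/2+s})$ and $\Eis(\overline{\tilde f_{1/2}}\tilde f_{1/2+s})$ themselves, verifying that their $i=r$ (resp.\ $i=1$) degenerate pieces reproduce exactly these two non-integrable cross terms, while their remaining degenerate and Whittaker pieces are integrable by the same exponent comparison. This establishes integrability of $\tilde E_s$.

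For holomorphy near $s=0$, the product $\overline{\Eis(f_{1/2})}\Eis(f_{1/2+s})$ is regular there since neither factor has a pole at Langlands parameter $1/2$ by \S\ref{eisenstein-series}. Each of $\Eis(\overline{f_{1/2}}f_{1/2+s})$ (at parameter $1+s$) and $\Eis(\overline{\tilde f_{1/2}}\tilde f_{1/2+s})$ (at parameter $1-s$) has at worst a simple pole at $s=0$ with $g$-independent residue, so it remains to show that the two constant residues cancel in $\tilde E_s$. I would verify this by computing each residue explicitly: use the relation $Mf_s=\tilde f_s$ from \eqref{intertwined-vector} and the functional equation of the associate parabolic Eisenstein series to identify the residual representation on the $\tilde P$-side with that on the $P$-side up to the sign forced by the intertwiner at parameter one. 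The main obstacle is precisely this residue cancellation; the Siegel-domain exponent bookkeeping for integrability, though tedious, is routine given Proposition \ref{fourier-expansion-eisenstein-series} and the hypothesis $r\ge 3$.
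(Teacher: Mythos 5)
Your integrability argument is essentially the paper's own: multiply out the Fourier expansions, bound the degenerate coefficients uniformly on $K$ and use rapid decay of the non-degenerate coefficients (this is exactly the content of Lemma \ref{bound-compact-part}), then compare $y$-exponents against $\delta\left[\begin{pmatrix}a(y_\infty)&\\&1\end{pmatrix}\right]$ on the Siegel domain \eqref{siegel-domain}. Your identification of the $(r,r)$- and $(1,1)$-terms as the only cross terms at the $L^1$-threshold for $r\ge 3$, and your treatment of $\Eis(\overline{\tilde f_{1/2}}\tilde f_{1/2+s})$ through its $i=1$ degenerate piece (equivalently, via the section $\tilde F_s\in\I_{r-1,1}(s)$ with $M_1^0\tilde F_s=\overline{\tilde f_{1/2}}\tilde f_{1/2+s}$), reproduce Lemmas \ref{y-exponents-1} and \ref{y-exponents-2}.

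The gap is the holomorphy step, which you yourself name as the main obstacle and then only gesture at. Asserting that the residues cancel ``up to the sign forced by the intertwiner at parameter one'' is not a proof: the pole/residue facts recalled in \S\ref{eisenstein-series} are stated for sections of the special form $f_{s,\Phi}$ with $\Phi$ Schwartz--Bruhat, whereas $\overline{f_{1/2}}f_{1/2+s}$ and $\overline{\tilde f_{1/2}}\tilde f_{1/2+s}$ are products of sections, so your route requires a residue formula for maximal parabolic Eisenstein series attached to general sections (say via the residue of the intertwining operator at the edge of the critical strip), plus a verified relative sign and an identification of the two resulting constants --- precisely the point where the cancellation could fail, and none of it is carried out. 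The paper sidesteps this computation (Lemma \ref{holomorphicity-central-point}): since both residues are $g$-independent constants, it pairs them against $|\phi_0|^2$ and uses the unfolded evaluations \eqref{degenerate-term1}--\eqref{degenerate-term2}, in which the poles come from $L(1\pm s,\pi_0\otimes\tilde{\pi}_0)$ with residues $\pm L(1,\pi_0,\Ad)$, and the equality of the two archimedean zeta factors is supplied by Lemma \ref{archimedean-factors} (itself resting on the local functional equation and the Whittaker--Plancherel formula). To complete your proposal you must either prove the residue formula and sign for these product sections directly, or adopt the paper's pairing argument.
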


Note from the definition \eqref{regularized-eisenstein-series} that $\tilde{E}_s$ is holomorphic in a punctured neighbourhood of $s=0$. Thus it is enough to prove that $\tilde{E}_s$ is holomorphic at $s=0$. Recall the description of the poles of the maximal Eisenstein series in \S\ref{eisenstein-series}. We know, in particular, $\overline{\Eis(f_{1/2})}\Eis(f_{1/2+s})$ is holomorphic at $s=0$ and we thus only need to show the following. 

\begin{lemma}\label{holomorphicity-central-point}
For fixed $g\in\X$
$$\Eis(\overline{f_{1/2}}f_{1/2+s})(g)+\Eis(\overline{\tilde{f}_{1/2}}\tilde{f}_{1/2+s})(g)$$ is holomorphic at $s=0$.
\end{lemma}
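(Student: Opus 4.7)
The plan is to show that each of $\Eis(\overline{f_{1/2}}f_{1/2+s})$ and $\Eis(\overline{\tilde f_{1/2}}\tilde f_{1/2+s})$ has at most a simple pole at $s=0$ with constant residue on $\X$, and that these residues cancel. Note that $\overline{f_{1/2}}f_{1/2+s} \in \I_{r-1,1}(1+s)$ while $\overline{\tilde f_{1/2}}\tilde f_{1/2+s} \in \I_{1,r-1}(1-s)$; at $s=0$ both sit at the residual parameter $\alpha=1$ of the respective maximal parabolic induction. By the theory recalled in \S\ref{eisenstein-series}, each such Eisenstein series has at most a simple pole at this residual point, and because the residual automorphic representation sitting inside $\I_{r-1,1}(1)$ is one-dimensional (trivial, in the Jacquet--Shalika/Langlands sense), the residue is a constant function on $\X$. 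Writing these constants as $R_A, R_B \in \C$, the lemma reduces to verifying $R_A + R_B = 0$.

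To extract $R_A$, I would use the constant-term formula from Proposition \ref{fourier-expansion-eisenstein-series}: the $P$-constant term of $\Eis(F_{1+s})$ with $F_{1+s} := \overline{f_{1/2}}f_{1/2+s}$ equals $F_{1+s} + M_1^0 F_{1+s}$, where $M_1^0$ is the intertwiner \eqref{defn-intertwiner}. Since $F_{1+s}(g)$ is holomorphic at $s=0$, the entire pole of the constant term (hence of $\Eis(F_{1+s})$ itself) comes from $M_1^0 F_{1+s}(g)$; moreover $R_A$, being constant in $g$, equals the $s$-residue of $M_1^0 F_{1+s}(g)$ at any convenient point, say $g=e$. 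Unfolding the defining integral of $M_1^0$ and substituting the Godement--Jacquet representation \eqref{relation-f-phi} for $f_{1/2+s}$ expresses $M_1^0 F_{1+s}(e)$ as a Tate-type integral in the variable $t$ of \eqref{relation-f-phi}, whose simple pole and residue at $s=0$ can be read off explicitly in terms of $\Phi$ and the $\overline{f_{1/2}}$ factor.

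The analogous computation yields $R_B$ from the $\tilde P$-constant term of $\Eis(\overline{\tilde f_{1/2}}\tilde f_{1/2+s})$. The key point is that the change of variable $g\mapsto wg^{-t}$ that defines $\tilde f_s$ via \eqref{intertwined-vector} converts the associate-parabolic intertwining integral into the Tate integral attached to the Fourier-transformed Schwartz datum $\hat\Phi$, with the opposite orientation on the $s$-parameter. Invoking the Tate functional equation $\Phi\leftrightarrow\hat\Phi$ together with this sign reversal should yield $R_B = -R_A$, completing the cancellation.

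The main obstacle is the explicit residue computation for $M_1^0 F_{1+s}$: because $F_{1+s}$ is a \emph{product} of two Godement--Jacquet sections rather than a single one, no standard residue formula applies directly. One must carry out the unfolding by hand in Bruhat coordinates, isolate the Tate-pole-producing factor of the intertwining integral, and carefully track normalizations of Haar measures and intertwiners on $N_1(\A)$ and its associate-parabolic counterpart to secure the sign. As a conceptually cleaner alternative, one may appeal to the Maass--Selberg or Langlands--Shahidi residue theory for $\GL_n$ maximal parabolic Eisenstein series, where cancellations of residues between $\Eis_P$ and $\Eis_{\tilde P}$ at the residual point are built into the general machinery.
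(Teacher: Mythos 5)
Your overall skeleton --- each of the two Eisenstein series has at most a simple pole at $s=0$ whose residue is a constant function on $\X$, so the lemma reduces to showing the two constants cancel --- is the same as the paper's. The gap is the cancellation itself, which is the whole content of the lemma and which your proposal does not establish. You propose to read $R_A$ off the residue of the intertwined constant term $M_1^0(\overline{f_{1/2}}f_{1/2+s})$ and to deduce $R_B=-R_A$ from ``the Tate functional equation together with the sign reversal''; but, as you yourself flag, $\overline{f_{1/2}}f_{1/2+s}$ is a \emph{product} of two Godement--Jacquet sections, so no standard residue formula applies, and the two limiting sections $|f_{1/2}|^2$ and $|\tilde{f}_{1/2}|^2$ are not equal pointwise: the equality of the two residue constants is a genuine analytic identity, not a formal consequence of $\Phi\leftrightarrow\hat{\Phi}$. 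Your fallback --- that Maass--Selberg or Langlands--Shahidi residue theory has the cancellation between $\Eis_P$ and $\Eis_{\tilde{P}}$ ``built in'' --- is not available in the form you need: the cancellation depends on the specific relation $\tilde{f}_s=M_1^0f_s$ between the two inducing data and on an archimedean identity, and no off-the-shelf statement delivers it for these product sections. (Two smaller points: the constant term along $N$ contains $M_i^0$ for all $1\le i\le r-1$, so you must also check that the intermediate intertwiners are regular at $s=0$; and extracting residues termwise from the Fourier expansion at $g=e$ requires uniformity in $s$ on a small circle about $0$.)

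The paper closes exactly this gap by a soft global argument: it pairs each Eisenstein series against $|\phi_0|^2$ for the fixed cusp form $\phi_0$. The unfolded formulas \eqref{degenerate-term1} and \eqref{degenerate-term2} show that the poles at $s=0$ come from the factors $L(1+s,\pi_0\otimes\tilde{\pi}_0)$ and $L(1-s,\pi_0\otimes\tilde{\pi}_0)$, so the residues are $\pm\frac{\zeta(r/2)^2}{\zeta(r)}L(1,\pi_0,\Ad)$ times $\Psi_\infty(|f_{1/2,\infty}|^2,W_{0,\infty},\overline{W_{0,\infty}})$ and $\Psi_\infty(|\tilde{f}_{1/2,\infty}|^2,W_{0,\infty},\overline{W_{0,\infty}})$ respectively; Lemma \ref{archimedean-factors} with $x_\infty=1$, proved via the Whittaker--Plancherel formula and the unitarity of the $\gamma$-factor at $1/2$ (i.e.\ the local functional equation), shows these two archimedean factors are equal, whence $R+\tilde{R}=0$. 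If you insist on your direct route, the identity you will eventually be forced to prove is precisely this equality of archimedean quadratic functionals, so you should either import Lemma \ref{archimedean-factors} at that point or carry out the two-variable Tate/intertwining residue computation in full, including the measure and intertwiner normalizations you deferred.
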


\begin{proof}
Our argument is to show that the residues $R$ and $\tilde{R}$ (which are independent of $g$) at the simple poles at $s=0$ of $\Eis(\overline{f_{1/2}}f_{1/2+s})$ and $\Eis(\overline{\tilde{f}_{1/2}}\tilde{f}_{1/2+s})$, respectively, cancel each other.

Let $\phi_0$ be the cusp form as we have chosen in \S\ref{local-choice}. From \eqref{degenerate-term1} we get
\begin{align*}
    R\|\phi_0\|^2_{2}&=\mathrm{Res}_{s=0}\langle |\phi_0|^2, \Eis(\overline{f_{1/2}}f_{1/2+s})\rangle\\ &=\frac{\zeta(r/2)^2}{\zeta(r)}L(1,\pi_0,\Ad)
    \Psi_\infty(|f_{1/2,\infty}|^2,W_{0,\infty},\overline{W_{0,\infty}}).
\end{align*}
Similarly, from \eqref{degenerate-term2} we get
\begin{align*}
    \tilde{R}\|\phi_0\|^2_{2}&=\mathrm{Res}_{s=0}\langle |\phi_0|^2, \Eis(\overline{\tilde{f}_{1/2}}\tilde{f}_{1/2+s})\rangle \\
    &= -\frac{\zeta(r/2)^2}{\zeta(r)}L(1,\pi_0,\Ad)\Psi_\infty(|\tilde{f}_{1/2,\infty}|^2,W_{0,\infty},\overline{W_{0,\infty}}).
\end{align*}
From Lemma \ref{archimedean-factors} with $x_\infty =1$ (and the equation preceding Lemma \ref{archimedean-factors}) we conclude that the $\Psi_\infty$ factors in the above expressions of $R$ and $\tilde{R}$ are equal.
\end{proof}

Now we prove some preparatory lemmata to prove the intgrability of $\tilde{E}_s$ on $\X$. We actually show that $\tilde{E}_s$ is integrable in the Siegel domain $\mathbb{S}$ as in \eqref{siegel-domain}, which contains $\X$.
Let $g\in\mathbb{S}$ with $g=(g_\infty,k_f)$ where $g_\infty=n_\infty\begin{pmatrix}a(y_\infty)&\\&1\end{pmatrix}k_\infty\in G(\R)$ in Iwasawa coordinates and $k_f\in K_f:=\prod_{p<\infty}K_p$. As $g\in\mathbb{S}$ we have $y_{j,\infty}\gg 1$. We recall the quantities in Proposition \ref{fourier-expansion-eisenstein-series} from \S\ref{fourier-eisenstein-series}.

\begin{lemma}\label{bound-compact-part}
Suppose that $i<r$. Let $s\in \C$ be away from a pole of $M_i^0f_s$ with $|\Re(s)|<2$. Then
$$\|M_i^0f_s\|_{L^\infty(K)}\ll 1.$$
Further, let $\R^\times\times K_f\ni (y_\infty,1)=:y$. Then for all $k\in K$ and $s$ with $|\Re(s)|<2$
$$\sum_{q\in\Q^\times}W_{f_s}^i(qy,k)\ll_{N} |y_\infty|^{-N},$$
where the sum in the left hand side converges absolutely.
\end{lemma}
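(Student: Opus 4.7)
The plan is to prove both claims by exploiting the factorization $f_s = \bigotimes_{p \leq \infty} f_{s,p}$ coming from the factorizable $\Phi = \bigotimes_p \Phi_p$.

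For the first claim, I would factor $M_i^0 f_s(k) = \prod_{p \leq \infty} M_i^0 f_{s,p}(k_p)$ using the Tate-style formula $M_i^0 f_{s}(k) = \int_{\A^\times} \widehat{(k.\Phi)}^i(te_i)|t|^{r-i+1-rs}d^\times t$ from the paragraph preceding \eqref{intertwined-vector}. At each finite prime, $\widehat{(k_p.\Phi_p)}^i = \Phi_p = \mathrm{char}(\Z_p^r)$ (since $K_p$ preserves $\Z_p^r$ and the partial Fourier transform of $\mathrm{char}(\Z_p^r)$ is self-dual for our measures), giving the Euler factor $(1-p^{-(r-i+1-rs)})^{-1}$, whence $\prod_{p<\infty} M_i^0 f_{s,p} = \zeta(r-i+1-rs)$. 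At infinity, the Schwartz function $t \mapsto \widehat{(k_\infty.\Phi_\infty)}^i(te_i)$ has Schwartz seminorms uniformly bounded in $k_\infty \in K_\infty$ by compactness, so its Mellin transform is meromorphic on $\C$ with a single simple pole at $s=(r-i+1)/r$ and otherwise bounded by finitely many of these seminorms. Thus for $s$ with $|\Re(s)|<2$ away from the two poles at $s=(r-i)/r$ and $s=(r-i+1)/r$, we get $\|M_i^0f_s\|_{L^\infty(K)} \ll 1$.

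For the second claim, I would factor $W^i_{f_s}(qy, k) = W^i_{f_{s,\infty}}(qy_\infty, k_\infty) \prod_{p<\infty} W^i_{f_{s,p}}(q, k_p)$ and estimate the two types of factors separately. At each finite prime, sphericality gives $W^i_{f_{s,p}}(q, k_p) = W^i_{f_{s,p}}(q, 1)$; the explicit formula $f_{s,p}(w_i n(x)) = \zeta_p(rs) \max(1, |x|_p)^{-rs}$ (derived from $\Phi_p = \mathrm{char}(\Z_p^r)$ via the Tate integral representation) reduces $W^i_{f_{s,p}}(q, 1)$ to a $\psi_p$-Fourier integral in $x_1$; splitting the $x$-integral according to which $|x_j|_p$ is maximal and computing directly shows that $W^i_{f_{s,p}}(q, 1) = 0$ for $q \notin \Z_p$, while for $q \in \Z_p$ it is bounded by a local divisor-type quantity. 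Taking the product over $p<\infty$ forces support on $q \in \Z \setminus \{0\}$, with $\prod_{p<\infty}|W^i_{f_{s,p}}(q, 1)| \ll_\epsilon |q|^\epsilon$. At infinity, I would perform integration by parts $N$ times in $x_1$ in
$$W^i_{f_{s,\infty}}(\xi, k_\infty) = \int_{\R^{r-i}} f_{s,\infty}(w_i n(x) k_\infty) \psi_\infty(-\xi x_1)\,dx,$$
to obtain $|W^i_{f_{s,\infty}}(\xi, k_\infty)| \ll_N |\xi|^{-N}$ uniformly in $k_\infty \in K_\infty$ and $s$ with $|\Re(s)|<2$. Combining, $\sum_{q \in \Q^\times} |W^i_{f_s}(qy, k)| \ll_N \sum_{q \in \Z\setminus\{0\}} |q|^{\epsilon-N} |y_\infty|^{-N} \ll_N |y_\infty|^{-N}$ for $N$ sufficiently large, establishing both absolute convergence and rapid decay.

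The main obstacle is the archimedean integration-by-parts step. Although $\Phi_\infty \in C_c^\infty$, the pullback $\Phi_\infty(t e_i n(x))$ has unbounded support in the variable $x_{r-i}$ (since we need $t x_{r-i} \approx 1$ with $|t| \leq \tau$), so $f_{s,\infty}(w_in(x)k_\infty)$ is merely smooth and of moderate growth in $x$, not Schwartz. The integration by parts must therefore be done only in the $x_1$-direction, transverse to the singular direction. Verifying that all $x_1$-derivatives of the integrand remain integrable in the remaining $x$-variables requires an Iwasawa decomposition of $w_in(x)$ (producing $(1+|x|^2)^{-r/2}$-type bounds and analogues for derivatives), and this is the technical heart of the argument.
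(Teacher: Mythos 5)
Your overall architecture is close to the paper's: constant term via the Tate integral/Euler factorization, finite-place support forcing $q$ into $\Z$, integration by parts in the $x_1$-direction at the archimedean place, then summation over $q$. The first claim is essentially fine (the paper instead bounds the adelic Tate integral in two half-planes and interpolates by Phragm\'en--Lindel\"of; your explicit $\zeta(r-i+1-rs)$ times archimedean Mellin factor is an acceptable variant, modulo the small inaccuracy that the archimedean Mellin transform has poles at $r-i+1-rs=0,-2,-4,\dots$, not just one, and that one should note the rapid vertical decay of the Mellin factor against the polynomial growth of $\zeta$). The problem is the second claim, and it is exactly the point you flag but do not resolve. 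For $\Re(s)\le 0$ (which the lemma must cover, since $|\Re(s)|<2$), the function $x\mapsto f_{s,\infty}(w_i n(x)k_\infty)$ has \emph{no} decay in $x$ — it grows polynomially, with exponent proportional to $-\Re(s)$ — so your defining integral $\int_{\R^{r-i}} f_{s,\infty}(w_i n(x)k_\infty)\psi_\infty(-\xi x_1)\,dx$ diverges and the promised ``$(1+|x|^2)^{-r/2}$-type bounds'' from an Iwasawa decomposition are simply not available uniformly in the strip. Integrating by parts in $x_1$ alone does not repair this unless you show that each $x_1$-derivative also gains decay in \emph{all} the remaining $x_j$-variables and simultaneously supplies the analytic continuation in $s$; neither is established in your sketch. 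The paper's device is precisely the missing step: unfold $f_s$ through its $\Phi$-integral and substitute $x\mapsto x/t$, so that $W^i_{f_s}(z,k)=\int\int (k.\Phi)(0,t,x)\,\overline{\psi_0(zx_1/t)}\,|t|^{rs-r+i}\,d^\times t\,dx$ with a \emph{Schwartz} integrand in $(t,x)$ and oscillation of frequency $z/t$. Then each integration by parts in $x_{1,\infty}$ gains a factor $t_\infty/z_\infty$, giving $\ll_N |z_\infty|^{-N}|t_\infty|^{N}$; the extra $|t_\infty|^N$ both makes every remaining integral absolutely convergent and continues $W^i_{f_s}$ to $\Re(s)\ge -2$. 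Without this (or an equivalent mechanism), your archimedean estimate, which is the heart of the lemma, is not proved.

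A secondary, harmless slip: your non-archimedean bound $\prod_{p<\infty}|W^i_{f_{s,p}}(q,1)|\ll_\epsilon |q|^\epsilon$ is false in most of the range. With $\Phi_p=\mathrm{char}(\Z_p^r)$ one computes $W^i_{f_{s,p}}(q,1)=\sum_{m=0}^{v_p(q)}p^{m(r-i-rs)}$ for $q\in\Z_p$, so for $\Re(s)<(r-i)/r$ (in particular at $\Re(s)\approx 1/2$ with $i=1$, the case used later) the product over $p\mid q$ grows like a fixed power of $|q|$, not $|q|^\epsilon$. This does not damage your conclusion, since the archimedean factor decays faster than any polynomial in $q$ once the archimedean step is in place, but the assertion as written is incorrect. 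Note also that the paper proves the lemma for an arbitrary Schwartz--Bruhat $\Phi$ (it is reused for $\tilde F_s$ in Lemma \ref{y-exponents-2}), so an argument pinned to $\Phi_p=\mathrm{char}(\Z_p^r)$ and to the compact support of $\Phi_\infty$ away from the origin is somewhat narrower than what is needed downstream.
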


\begin{proof}
In this proof we assume that $\Phi\in\mathcal{S}(\A^r)$ is an arbitrary Schwartz function. We get that for $k\in K$
$$M_i^0f_s(k)=\int_{\A^{r-i}}\int_{\A^\times}(k.\Phi)(0,t,x)|t|^{rs-r+i}d^\times t\ll\int_{|x|,|t|\ll 1}|t|^{rs-r+i}d^\times t dx\ll_{K,\Re(s)}1,$$
if $\Re(s)$ is sufficiently large.
On the other hand, using the Tate functional equation and working similarly we obtain
$$M_i^0f_s(k)=\int_{\A^\times}\widehat{(k.\Phi)}^i(te_i)|t|^{r-i+1-rs}\ll_{K,\Re(s)} 1,$$
if $\Re(s)$ is sufficiently negative. Using the Phragm\'en--Lindel\"of convexity principle and the compactness of $K$ we deduce the first claim.

Let $z\in\A$ and $k\in K$. Following a similar computations after \eqref{y-asymptotic-fourier-coeff} in \S\ref{fourier-eisenstein-series} we get that
$$W^i_{f_s}(z,k)=\int_{\A^{r-i}}\int_{\A^\times}(k.\Phi)(0,t,x)\overline{\psi_0(zx_1/t)}|t|^{rs-r+i}d^\times t dx.$$
This converges absolutely if $\Re(s)$ is sufficiently large. 

We first concentrate on the $x_1$ integral. In the archimedean component of this integral we integrate by parts with respect to the $x_{1,\infty}$ variable. This yields that archimedean integral is bounded by $\ll_N|z_\infty|^{-N}|t_\infty|^{N}$ for all large $N$. 

In the $p$-adic component we note that compact support of $\Phi_p$ forces $x_{1,p}$ to vary over a compact space. This implies that the $p$-adic integral vanishes unless $|z_p/t_p|\ll 1$. However, the support condition of $\Phi_p$ ensures that $|t_p|\ll 1$, which in turn restricts $z_p$ to be of bounded size.

Thus we can analytically continue the integral representation of $W^i_{f_s}$ to $\Re(s)$ sufficiently negative, but fixed. Altogether, estimating the integrals as before we obtain if $\Re(s)\ge-2$ then for sufficiently large $N$ we have
$$W_{f_s}^i(z,k)\ll_{K,N}|z_\infty|^{-N}\prod_{p<\infty}\mathrm{char}_{|z_p|\ll 1}.$$
Thus for $q\in\Q^\times$ and $y$ as in the statement of this lemma we have 
$$W_{f_s}^i(qy,k)\ll_{K,N}|y_\infty q_\infty|^{-N},$$
if the denominator of $q$ is bounded; otherwise, the above is zero. Thus the sum over $q\in\Q^\times$ is absolutely convergent for sufficiently large $N$. We conclude using the compactness of $K$.
\end{proof}

\begin{lemma}\label{y-exponents-1}
Let $g\in\mathbb{S}$ and $s\in\C$ with sufficiently small $\Re(s)$. Then
$$\overline{\Eis(f_{1/2})(g)}\Eis(f_{1/2+s})(g)-\overline{f_{1/2}(g)}f_{1/2+s}(g)-\overline{\tilde{f}_{1/2}(g)}\tilde{f}_{1/2+s}(g)\ll\delta^{1-\eta}\left[\begin{pmatrix}a(y_\infty)&\\&1\end{pmatrix}\right],$$
for some $\eta>0$.
\end{lemma}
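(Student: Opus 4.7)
The plan is to plug the Fourier expansion of Proposition \ref{fourier-expansion-eisenstein-series} into both Eisenstein series, expand the product, and verify that the two subtracted terms exactly cancel the only two ``diagonal'' constant-term contributions whose growth matches that of $\delta$. Denote the constant-term pieces of $\Eis(f_s)$ by
\[
C_0(s,g) := f_s(g), \qquad C_i(s,g) := \prod_{j=1}^{i-1} |y_j|^{sj} \prod_{j=i}^{r-1} |y_j|^{(1-s)(r-j)} M_i^0 f_s(k) \quad (1 \le i \le r-1),
\]
so that $C_1 = \tilde{f}_s$ by \eqref{intertwined-vector}, and let $\mathcal{N}(s,\cdot)$ be the sum of the non-constant Fourier modes. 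Expanding and cancelling, the left-hand side of the claimed estimate becomes a finite sum
\[
\sum_{(i,j) \ne (0,0),(1,1)} \overline{C_i(1/2,g)}\, C_j(1/2+s,g) \ + \ (\text{cross-products involving }\mathcal{N}),
\]
which we bound piece by piece.

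For the constant-term cross-products, Lemma \ref{bound-compact-part} and the choice of local data in \S\ref{local-choice} ensure each $|M_i^0 f_s(k)|$ (and also $|f_s(k)|$) is bounded on $K$ uniformly for $|\Re(s)|$ small, so the $y$-dependence of $|\overline{C_i(1/2)}\, C_j(1/2+s)|$ is carried entirely by the explicit power factors. At $\Re(s) = 0$, the $|y_k|$-exponent of this product equals $k$ if $k < \min(i,j)$, equals $r-k$ if $k \ge \max(i,j)$, and equals $r/2$ for $k$ strictly between, and these will be compared to the $|y_k|$-exponent $k(r-k)$ of $\delta\left[\begin{pmatrix} a(y) & \\ & 1\end{pmatrix}\right]$. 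A direct case check yields: $k < k(r-k)$ strictly for $k < r-1$ (so a match can only occur at $k = r-1$, which forces $(i,j) = (0,0)$); $r-k < k(r-k)$ strictly for $k > 1$ (so a match can only occur at $k = 1$, which forces $(i,j) = (1,1)$); and $r/2 < r-1 \le k(r-k)$ for every $1 \le k \le r-1$ since $r \ge 3$. Hence for each remaining pair $(i,j) \ne (0,0),(1,1)$ every one of the finitely many $y_k$-exponents is strictly below $k(r-k)$, and taking $|\Re(s)|$ small enough preserves all strict inequalities, yielding a bound $\ll \delta^{1-\eta}$ for some $\eta > 0$.

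For any cross-product involving $\mathcal{N}$ at some position $i$, the second assertion of Lemma \ref{bound-compact-part} gives rapid decay $\sum_{q \in \Q^\times} W^i_{f_s}(qy_i,k) \ll_N |y_{i,\infty}|^{-N}$ uniformly in $k \in K$, while the polynomial prefactors on $y$ have bounded exponents. In the Siegel domain, where the remaining $y_j$ are bounded below, this rapid decay in $y_{i,\infty}$ trivially dominates any polynomial growth in the other variables, so these cross-products are $\ll \delta^{-N}$ for every $N$, hence certainly $\ll \delta^{1-\eta}$.

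The main obstacle will be the exponent bookkeeping in the second paragraph, whose success depends critically on $r \ge 3$: if $r = 2$ then $r/2 = 1 = k(r-k)$ at $k = 1$, so the off-diagonal cross-term $\overline{f_{1/2}} \tilde{f}_{1/2+s}$ would itself fail to be $\ll \delta^{1-\eta}$, matching the author's remark that $n = 2$ requires a different regularization. A secondary technical point is simply choosing $|\Re(s)|$ uniformly small enough to preserve all of the finitely many strict inequalities at once, which is harmless since only finitely many triples $(i,j,k)$ need to be controlled.
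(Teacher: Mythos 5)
Your overall strategy coincides with the paper's: insert the Fourier expansion of Proposition \ref{fourier-expansion-eisenstein-series} into both factors, use Lemma \ref{bound-compact-part} to control the $K$-dependence, and compare the resulting $|y_{j,\infty}|$-exponents against the exponent $j(r-j)$ of $\delta$, with $r\ge 3$ entering exactly where you say it does. Your constant-term bookkeeping is correct and isolates precisely the two subtracted pairs as the only ones attaining a critical exponent. (A cosmetic point: your min/max rule is literally valid only if the term $f_s$ is indexed by $i=r$, its Bruhat-cell index from Lemma \ref{bruhat-decomposition}, rather than by $0$; with the label $0$ the rule as stated misassigns the exponents for pairs involving $f_s$, and would even flag the harmless pair $\overline{f_{1/2}}\tilde f_{1/2+s}$, whose exponents are all $r/2$. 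Your subsequent case check shows you intended the correct assignment, so this is only a labelling slip.)

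The genuine gap is the last step, where you dismiss all cross-products involving the non-constant modes $\mathcal{N}$. Rapid decay of $\sum_{q\in\Q^\times}W^i_{f_s}(qy_i,k)$ in $y_{i,\infty}$ controls only the variable $y_i$; it does not dominate the polynomial growth in the remaining variables $y_j$, $j\neq i$, and the assertion that these cross-products are $\ll\delta^{-N}$ for every $N$ is false: for instance $\overline{f_{1/2}}$ times a mode at position $i$ still grows like $\prod_{j\neq i}|y_{j,\infty}|^{\text{positive power}}$ on $\mathbb{S}$. What actually saves these terms is the same per-variable exponent comparison you performed for the constant terms: at each $y_j$ with $j\neq i$ the exponent (at $\Re(s)=0$) is $j$ (which can occur only for $j\le r-2$, since $j$ must be smaller than the mode index), $r/2$, or $r-j$ (which can occur with $j=1$ only if both factors have index $1$), hence strictly below $j(r-j)$ in all cases except the one where the critical variable is $y_1$ and both factors have index $1$ --- and in that case the decay variable is $y_1$ itself, so the $|y_{1,\infty}|^{-N}$ saving applies exactly where it is needed. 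This is how the paper proceeds: it keeps the rapid decay explicitly only for the $i=1$ modes (its bound for $H^1_s$ carries the exponent $-N\delta_{j=1}$), while for $1<i<r$ the prefactor exponents are already subcritical. As written, your treatment of the $\mathcal{N}$-terms would fail; with this repair, which stays entirely within your framework, the argument is complete and agrees with the paper's proof.
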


\begin{proof}
Recall that
$$\delta\left[\begin{pmatrix}a(y_\infty)&\\&1\end{pmatrix}\right]=\prod_{j=1}^{r-1}|y_{j,\infty}|^{j(r-j)}.$$
Note that $y_{j,\infty}\gg 1$ as $g\in\mathbb{S}$. Thus it is enough the show that the exponents of $|y_{j,\infty}|$ arising in the left hand side in the expression in the lemma are less than $j(r-j)$.

We recall \eqref{y-asymptotic-fourier-coeff} and for $1<i<r$ write
\begin{multline*}
    H^i_s(g):=\sum_{q\in\Q}M_i^qf_{1/2+s}(g)\\=M_i^{0}f_{1/2+s}(g)+\sum_{q\in\Q^\times}\psi_{i(q)}(n_1)\prod_{j=1}^{i-1}|y_j|^{(1/2+s)j}\prod_{j=i}^{r-1}|y_j|^{(1/2-s)(r-j)}W^i_{f_{1/2+s}}(qy_i,k),
\end{multline*}
where $n_1$ is a unipotent element as in \S\ref{eisenstein-series}.
Using Lemma \ref{bound-compact-part} we obtain that
\begin{equation*}
    H^i_s(g)\ll \prod_{j=1}^{i-1}|y_{j,\infty}|^{(1/2+\Re(s))j}\prod_{j=i}^{r-1}|y_{j,\infty}|^{(1/2-\Re(s))(r-j)}.
\end{equation*}
On the other hand, we similarly obtain
\begin{equation*}
    H^1_s(g):=\sum_{q\in\Q^\times}M_1^qf_{1/2+s}(g)\ll_{K,N} \prod_{j=1}^{r-1}|y_{j,\infty}|^{(1/2-\Re(s))(r-j)-N\delta_{j=1}}.
\end{equation*}
We also record that
$$f_{1/2+s}(g)\ll \prod_{j=1}^{r-1}|y_{j,\infty}|^{(1/2+\Re(s))j},$$
and
$$\tilde{f}_{1/2+s}(g)\ll \prod_{j=1}^{r-1}|y_{j,\infty}|^{(1/2-\Re(s))(r-j)}.$$
We use Lemma \ref{fourier-expansion-basic} to rewrite
$$\Eis(f_{1/2+s})(g)=f_{1/2+s}(g)+\tilde{f}_{1/2+s}(g)+H^1_{s}(g)+\sum_{1<i<r}H_s^i(g).$$
After multiplying $\overline{\Eis(f_{1/2})}$ and $\Eis(f_{1/2+s})$ using the above expression and subtracting the terms $\overline{f_{1/2}}f_{1/2+s}$, and $\overline{\tilde{f}_{1/2}}\tilde{f}_{1/2+s}$ we are left with the following type of terms whose bounds are given below:
$$\overline{f_{1/2}}\tilde{f}_{1/2+s}(g)\ll \prod_{j=1}^{r-1}|y_{j,\infty}|^{r/2-(r-j)\Re(s)}.$$
If we replace the left hand side above with $\overline{\tilde{f}_{1/2}}{f}_{1/2+s}(g)$ then a similar inequality holds with the exponent in the right hand side being $r/2+j\Re(s)$.
In any case for sufficiently small $\Re(s)$ and $r\ge 3$ we have
$$r/2+r|\Re(s)|<j(r-j),\quad 1\le j<r.$$
A similar estimate can be done for $\overline{f_{1/2}}H^1_s(g)$.
Next we check that
$$\overline{H^1_0(g)}\tilde{f}_{1/2+s}\ll \prod_{j=1}^{r-1}|y_{j,\infty}|^{(1-\Re(s))(r-j)-N\delta_{j=1}}.$$
A similar estimate can be obtained if we replace $\tilde{f}_{1/2+s}$ by $H^1_s$ on the left hand side above. Clearly, for sufficiently small $\Re(s)$ we have
$$(1-\Re(s))(r-j)-N\delta_{j=1}<j(r-j).$$
Finally, for $1<i<r$ the exponent of $y_{j,\infty}$ of $\overline{H^i_0}$ is $\le(r-2)/2$. On the other hand the same of $G_s$ is $\le(1/2+|\Re(s)|)(r-1)$ for $G_s$ being one of ${f_{1/2+s}}$, $\tilde{f}_{1/2+s}$, or $H^i_s$ with $i<r$. So the exponent of $y_{j,\infty}$ of the product $\overline{H^i_0}G_s$ for $1<i<r$ is 
$$\le (r-2+r-1)/2+(r-1)|\Re(s)|<j(r-j),$$
for sufficiently small $\Re(s)$. 

Similarly, one estimates remaining terms of the form $\overline{H^1_0}f_{1/2+s}$, $\overline{G_0}H^i_s$, and $\overline{\tilde{f}_{1/2}}H^1_s$, which we leave for the reader.
Hence we conclude the proof.
\end{proof}

\begin{lemma}\label{y-exponents-2}
Let $g\in\mathbb{S}$ and $s\in \C$ with sufficiently small $\Re(s)$. Then
$$\Eis(\overline{f_{1/2}}f_{1/2+s})(g)-\overline{f_{1/2}(g)}f_{1/2+s}(g)\ll \delta^{1-\eta}\left[\begin{pmatrix}a(y_\infty)&\\&1\end{pmatrix}\right],$$
and also
$$\Eis(\overline{\tilde{f}_{1/2}}\tilde{f}_{1/2+s})(g)-\overline{\tilde{f}_{1/2}(g)}\tilde{f}_{1/2+s}(g)\ll\delta^{1-\eta}\left[\begin{pmatrix}a(y_\infty)&\\&1\end{pmatrix}\right],$$
for some $\eta>0$.
\end{lemma}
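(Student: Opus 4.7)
The key observation is that $F_s := \overline{f_{1/2}} f_{1/2+s}$ is a holomorphic section of the generalized principal series $\I_{r-1,1}(1+s)$: indeed, \eqref{transformation-f} applied to both $f_{1/2}$ and $f_{1/2+s}$ yields $F_s\left[\begin{pmatrix} h & \\ & 1\end{pmatrix} g\right] = |\det(h)|^{1+s} F_s(g)$. Analogously, $\widetilde{F}_s := \overline{\tilde{f}_{1/2}} \tilde{f}_{1/2+s}$ is a section of the associate principal series $\I_{1,r-1}(1-s)$. Thus Proposition \ref{fourier-expansion-eisenstein-series} (with parameter $1+s$ in place of $s$) applies to $\Eis(F_s)$, and an analogous expansion for the associate parabolic $\tilde{P}$ applies to $\Eis(\widetilde{F}_s)$ with parameter $1-s$.

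Applying the proposition to $F_s$, the $i = r$ leading term of the Fourier expansion contributes precisely $F_s(g) = \overline{f_{1/2}(g)} f_{1/2+s}(g)$, cancelling the subtracted term. It therefore suffices to bound
$$\sum_{i=1}^{r-1} \prod_{j=1}^{i-1}|y_{j,\infty}|^{(1+s)j} \prod_{j=i}^{r-1}|y_{j,\infty}|^{-s(r-j)} \left[ M_i^0 F_s(k) + \sum_{q \in \Q^\times} W^i_{F_s}(qy_{i,\infty}, k) \psi_0(qx_{i,i+1}) \right]$$
by $\delta^{1-\eta}\left[\begin{pmatrix} a(y_\infty) & \\ & 1 \end{pmatrix}\right] = \prod_j |y_{j,\infty}|^{(1-\eta)j(r-j)}$. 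The required generalization of Lemma \ref{bound-compact-part} to $F_s$, namely $\|M_i^0 F_s\|_{L^\infty(K)} \ll 1$ and $\sum_{q \in \Q^\times} W^i_{F_s}(qy, k) \ll_N |y_\infty|^{-N}$ for $|\Re(s)|$ small, follows by repeating the proof of Lemma \ref{bound-compact-part}: the nonarchimedean components of $F_s$ are constant multiples of the spherical vector (so the intertwiner integrals reduce to ratios of local zeta factors and the Whittaker terms vanish off a compact $y_p$-region), while the archimedean component $\overline{f_{1/2,\infty}} f_{1/2+s,\infty}$ retains the compact Bruhat support and Schwartz decay inherited from $f_{1/2,\infty}$.

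Granted these bounds, the proof reduces to exponent arithmetic. For $i \in \{1, \ldots, r-1\}$ and $j < i$, the relevant exponent on $|y_{j,\infty}|$ is $(1+\Re(s))j$; since $j \leq i-1 \leq r-2$ we have $r - j \geq 2$, whence $(1+\Re(s))j < (1-\eta)j(r-j)$ for all sufficiently small $\eta > 0$ and $|\Re(s)|$. For $j \geq i$, the exponent is $-\Re(s)(r-j)$, trivially below $(1-\eta)j(r-j) \geq 1-\eta$ for small $|\Re(s)|$. The non-constant Whittaker terms additionally gain arbitrary rapid decay $|y_{i,\infty}|^{-N}$, easily absorbing the exponent on $y_i$. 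The second statement follows by the same argument applied to $\widetilde{F}_s \in \I_{1,r-1}(1-s)$, either via the Fourier expansion for the associate parabolic $\tilde{P}$ or by transfer to the $P$-setting through the involution $g \mapsto w g^{-t}$ that intertwines the two parabolics. The main obstacle I anticipate is the verification of the extended Lemma \ref{bound-compact-part} for the composite sections $F_s, \widetilde{F}_s$: although these are not pure Schwartz--Bruhat sections $f_{s,\Phi}$, their factorizability and the preserved compactness and Schwartz decay at each place should allow the original integral manipulations to transfer with only cosmetic modifications.
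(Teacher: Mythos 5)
Your handling of the first inequality is essentially the paper's intended argument: the paper proves the second inequality in detail and dismisses the first as ``similar and easier'', and your reduction --- view $\overline{f_{1/2}}f_{1/2+s}$ as a section of $\I_{r-1,1}(1+s)$, apply Proposition \ref{fourier-expansion-eisenstein-series}, cancel the leading term, and run the exponent count against $(1-\eta)j(r-j)$ --- is exactly what is needed. Your flagged ``main obstacle'' is also genuinely fine in this case: since $\Phi_\infty$ is supported away from the origin, $\overline{f_{1/2,\infty}}f_{1/2+s,\infty}$ really is of the form $f_{1+s,\Psi_s}$ with $\Psi_s\in C_c^\infty$, and at the finite places the section is an explicit multiple of the spherical one, so Lemma \ref{bound-compact-part} transfers. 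One caveat you omit: $\Eis(\overline{f_{1/2}}f_{1/2+s})$ has a simple pole at $s=0$ (as does $M_1^0$ applied to this section), so as in the paper the bound only holds for $s$ away from the poles, with the residues treated separately in Lemma \ref{holomorphicity-central-point}.

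For the second inequality your route differs from the paper's, and this is where your ``same argument'' claim is thinner than you suggest. The paper does not expand along the associate parabolic: it uses the functional equation of the Eisenstein series to write $\Eis(\overline{\tilde{f}_{1/2}}\tilde{f}_{1/2+s})=\Eis(\tilde{F}_s)$ with $\tilde{F}_s\in\I_{r-1,1}(s)$ the preimage of $\overline{\tilde{f}_{1/2}}\tilde{f}_{1/2+s}$ under the standard intertwiner, so that the subtracted term is the constant term $M_1^0\tilde{F}_s$ of the $P$-expansion; all estimates then stay inside the machinery of Proposition \ref{fourier-expansion-eisenstein-series} and Lemma \ref{bound-compact-part}, which the paper only develops for the $(r-1,1)$-parabolic, supplemented by the holomorphy of $\tilde{F}_s$ near $s=0$. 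Your alternatives are viable but need more than cosmetic work: the direct $\tilde{P}$-expansion requires redoing \S\ref{fourier-eisenstein-series} for the associate parabolic, while the transfer $g\mapsto wg^{-t}$ keeps the $P$-machinery but replaces $\Phi_\infty$ by $\hat{\Phi}_\infty$, and there the product section built from $\hat{\Phi}_\infty$ is \emph{not} of Schwartz--Bruhat type (the would-be $\Psi$ is singular at the origin because $\hat{\Phi}_\infty(0)\neq 0$), which is precisely the situation the paper's functional-equation device avoids; one should also record that the involution permutes the Iwasawa coordinates $y_j\mapsto y_{r-j}$, preserves the Siegel-domain condition, and leaves $\delta$ invariant. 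Finally, the exponent arithmetic you carried out is for the parameter $1+s$; in the $\I_{1,r-1}(1-s)$ picture the surviving terms carry exponents of the shape $(1-\Re(s))(r-j)$ only for $j\ge 2$ (the dangerous $j=1$ exponent $(1-\Re(s))(r-1)$ sits in the cancelled leading term), so the comparison with $(1-\eta)j(r-j)$ closes for a different reason and should be verified rather than cited as identical.
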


\begin{proof}
We take a very similar path as in the proof of Lemma \ref{y-exponents-1}. Let $s\in\C$ be away from the poles of the relevant Eisenstein series and $\Re(s)$ be sufficiently small.

First note that $\overline{\tilde{f}_{1/2}}\tilde{f}_{1/2+s}\in \I_{1,r-1}(1-s)$. We use the functional equation of the Eisenstein series \cite[Proposition 2.1]{Co}: there exists $\tilde{F}_s\in \I_{r-1,1}(s)$ such that
$$\Eis(\overline{\tilde{f}_{1/2}}\tilde{f}_{1/2+s})=\Eis(\tilde{F}_s).$$
In fact, $\tilde{F}_s$ is the pre-image of $\overline{\tilde{f}_{1/2}}\tilde{f}_{1/2+s}$ under the standard intertwiner from $\I_{1,r-1}(1-s)$ to $\I_{r-1,1}(s)$, i.e.\
$$\overline{\tilde{f}_{1/2}}\tilde{f}_{1/2+s}=M_1^0\tilde{F}_s,$$
and in particular, $\tilde{F}_s$ is holomorphic in a sufficiently small neighbourhood of $s=0$.

From Lemma \ref{fourier-expansion-basic} we get that
\begin{align*}
&\Eis(\overline{\tilde{f}_{1/2}}\tilde{f}_{1/2+s})-\overline{\tilde{f}_{1/2}(g)}\tilde{f}_{1/2+s}(g)=\Eis(\tilde{F}_s)(g)-M_1^0\tilde{F}_s(g)\\
&=\tilde{F}_s(g)+\sum_{q\in\Q^\times}M_1^q\tilde{F}_s(g) +\sum_{1<i<r}\sum_{q\in\Q}M_i^q\tilde{F}_s(g).
\end{align*}
We now bound each summand above similarly to the proof of Lemma \ref{y-exponents-1}.
As $\tilde{F}_s\in\I_{r-1,1}(s)$ we obtain that
$$\tilde{F}_s(g)\ll \left|\tilde{F}_s\left[\begin{pmatrix}a(y_\infty)&\\&1\end{pmatrix}\right]\right|\ll  \prod_{j=1}^{r-1}|y_{j,\infty}|^{j\Re(s)}.$$
Here we applied $\|\tilde{F}_s\|_{L^\infty(K)}\ll 1$, which can be deduced similarly to the proof of Lemma \ref{bound-compact-part} and applying holomorphicity of $\tilde{F}_s$ for small $s$. 

On the other hand, once again recalling \eqref{y-asymptotic-fourier-coeff} we obtain for $1<i<r$ that
\begin{multline*}
    \sum_{q\in\Q}M_i^q\tilde{F}_s(g)
    \ll M_i^0\tilde{F}_s\left[\begin{pmatrix}a(y_\infty)&\\&1\end{pmatrix}\right]
    \\+\prod_{j=1}^{i-1}|y_{j,\infty}|^{j\Re(s)}\prod_{j=i}^{r-1}|y_{j,\infty}|^{(1-\Re(s))(r-j)}\sum_{q\in\Q^\times}\|W^i_{\tilde{F}_s}(qy_i,.)\|_{L^\infty(K)}\\
    \ll \prod_{j=1}^{i-1}|y_{j,\infty}|^{j\Re(s)}\prod_{j=i}^{r-1}|y_{j,\infty}|^{(1-\Re(s))(r-j)}.
\end{multline*}
In the last estimate above we used that
$$\sum_{q\in\Q^\times}\|W^i_{\tilde{F}_s}(qy_i,.)\|_{L^\infty(K)}\ll_N |y_{i,\infty}|^{-N},$$
which can be deduced similarly to the proof of Lemma \ref{bound-compact-part}.
Similarly, we deduce that
$$\sum_{q\in\Q^\times}M_1^q\tilde{F}_s(g)\ll_N \prod_{j=1}^{r-1}|y_{j,\infty}|^{(1-\Re(s))(r-j)-N\delta_{j=1}}.$$
In each case the exponent of $y_{j,\infty}$ is strictly smaller than $j(r-j)$, which concludes the proof for the second assertion for sufficiently small $\Re(s)$. The first assertion can be proved similarly (and more easily), which we leave for the reader.
\end{proof}

\begin{proof}[Proof of Proposition \ref{regularized-period}]
In Lemma \ref{holomorphicity-central-point} we already have proved holomorphicity of $\tilde{E}_s$ at $s=0$. From Lemma \ref{y-exponents-1} and Lemma \ref{y-exponents-2} we conclude by the triangle inequality that for sufficiently small $\Re(s)$ and $g\in\mathbb{S}$
$$\tilde{E}_s(g)\ll \delta^{1-\eta}\left[\begin{pmatrix}a(y_\infty)&\\&1\end{pmatrix}\right],$$
for some $\eta>0$. Thus
$$\int_\X |\tilde{E}_s|(g)dg\ll \int_{y_{j,\infty}\gg 1}\delta^{-\eta}\left[\begin{pmatrix}a(y_\infty)&\\&1\end{pmatrix}\right]\prod_jd^\times y_{j,\infty}.$$
The last integral is convergent and we conclude.
\end{proof}

\section{Analysis of the Spectral Side}\label{analysis-spectral-side}
Recall the spectral weight $J_X(\pi_\infty)$ from \eqref{normalized-spectral-weight}, the choices of the local components from \S\ref{local-choice}, and the $\vartheta_0$-temperedness assumption on $\pi_{0,\infty}$ from the statement of Theorem \ref{main-theorem}. In this section we prove the remaining second property of the spectral weight as described in Theorem \ref{main-theorem}. That is, we show that $J_X(\pi_\infty)$ is uniformly bounded away from zero if $\pi_\infty$ is $\vartheta$-tempered with $\vartheta+\vartheta_0<1/2$ and $C(\pi_\infty)<X$.

\begin{prop}\label{main-prop-spectral-side}
Let $\pi\in\hat{\X}_\gen$ be such that $\pi_\infty$ is $\vartheta$-tempered with $\vartheta+\vartheta_0<1/2$. Let $\pi_0$ be the cuspidal automorphic representation as in Theorem \ref{main-theorem}. Then,
$$J_X(\pi_\infty)\gg 1,\quad\text{if }C(\pi_\infty)<X,$$
where the implied constant possibly depends on $W_{0,\infty},\Phi_\infty$.
\end{prop}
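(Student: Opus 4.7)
The plan is to produce, from the analytic newvector theory, a single unit vector $W_*\in\pi_\infty$ for which
\begin{equation*}
\left|\Psi_\infty(f_{1/2,\infty}(.x_\infty),W_{0,\infty},\overline{W_*})\right|^2\;\gg\; X^{-(r-1)}.
\end{equation*}
Since $W_*$ may be completed to an orthonormal basis $\B(\pi_\infty)$, that single term of \eqref{spectral-weight} already bounds $J(f_{1/2,\infty}(.x_\infty)W_{0,\infty},\pi_\infty)$ from below by $|\Psi_\infty|^2$, and the normalization \eqref{normalized-spectral-weight} then yields $J_X(\pi_\infty)\ge X^{r-1}|\Psi_\infty|^2\gg 1$. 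The choice of $W_*$ is forced by the Kirillov model of \S\ref{whittaker-kirillov-model}: the bump function $h\mapsto W_{0,\infty}\!\left[\begin{pmatrix}h&\\&1\end{pmatrix}\right]$ in $C_c^\infty(N_{r-1}(\R)\backslash\GL_{r-1}(\R),\psi_\infty)^{\mathrm{O}_{r-1}(\R)}$ lies also in the Kirillov model of $\pi_\infty$ and defines a unit vector $W_*\in\pi_\infty$. By \cite[Theorem 7]{JN} this $W_*$ is an analytic newvector, so $\|\pi_\infty(k)W_*-W_*\|_{\pi_\infty}<\epsilon$ for every $k\in K_0(C(\pi_\infty),\tau)$; since the hypothesis $C(\pi_\infty)<X$ together with \eqref{defn-congruence-subgroup} implies $K_0(X,\tau)\subseteq K_0(C(\pi_\infty),\tau)$, the same invariance holds for $k\in K_0(X,\tau)$.

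Next I would unfold $\Psi_\infty$ in Bruhat coordinates, factoring $\begin{pmatrix}h&\\c&1\end{pmatrix}=\begin{pmatrix}h&\\&1\end{pmatrix}\begin{pmatrix}I&\\c&1\end{pmatrix}$, applying the transformation \eqref{transformation-f} to $f_{1/2,\infty}$ at $\begin{pmatrix}hX&\\cX&1\end{pmatrix}$, and substituting $c\mapsto c/X$. This gives
\begin{equation*}
\Psi_\infty = X^{-(r-1)/2}\!\int_{\R^{r-1}}\! f_{1/2,\infty}\!\left[\begin{pmatrix}I&\\c&1\end{pmatrix}\right]\!\int_{N_{r-1}(\R)\backslash\GL_{r-1}(\R)}\! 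W_{0,\infty}(hk_c)\overline{W_*(hk_c)}\,\frac{dh}{|\det h|^{1/2}}\,dc,
\end{equation*}
with $k_c:=\begin{pmatrix}I&\\c/X&1\end{pmatrix}$. The support condition $|c|\le\tau_1<\tau$ from \S\ref{local-choice} places every $k_c$ in $K_0(X,\tau)$.

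The main step is to replace $k_c$ by the identity inside both Whittaker functions, incurring only a small error. For the fixed smooth $W_{0,\infty}$, a Taylor estimate on its compact Kirillov support gives $|W_{0,\infty}(hk_c)-W_{0,\infty}(h)|\ll_{W_{0,\infty}}|c/X|$. For the variable $W_*$, the analytic newvector bound controls only the $\pi_\infty$-norm, so I would apply Cauchy--Schwarz together with the compact Kirillov support of $W_{0,\infty}$ (on which $|\det h|^{-1/2}$ is bounded) to get
\begin{equation*}
\left|\int_h W_{0,\infty}(h)\overline{[W_*(hk_c)-W_*(h)]}\,\frac{dh}{|\det h|^{1/2}}\right|\;\ll_{W_{0,\infty}}\;\|\pi_\infty(k_c)W_*-W_*\|_{\pi_\infty}\;<\;\epsilon,
\end{equation*}
and an entirely analogous bound for the $W_{0,\infty}$-difference. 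Combining these and integrating against $f_{1/2,\infty}$ yields
\begin{equation*}
\Psi_\infty = X^{-(r-1)/2}\!\left[\left(\int_{\R^{r-1}}\! f_{1/2,\infty}\!\left[\begin{pmatrix}I&\\c&1\end{pmatrix}\right]\!dc\right)\!\int_h |W_{0,\infty}(h)|^2\,\frac{dh}{|\det h|^{1/2}}+O_{W_{0,\infty},\Phi_\infty}\!\left(\tfrac{1}{X}+\epsilon\right)\right].
\end{equation*}
Both leading factors are strictly positive by positivity and compact support of $\Phi_\infty$ and $W_{0,\infty}$; fixing $\tau$ in \S\ref{local-choice} small enough to force $\epsilon$ sufficiently small and then taking $X$ large makes the main term dominate, giving $|\Psi_\infty|^2\gg_{W_{0,\infty},\Phi_\infty} X^{-(r-1)}$ and hence the proposition.

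The principal obstacle is the conversion just carried out: the analytic newvector theory supplies only $\pi_\infty$-norm (Kirillov $L^2$) approximate invariance of $W_*$, whereas the Bruhat measure introduces the non-unitary weight $|\det h|^{-1/2}$ into the inner integral. The compact Kirillov support of $W_{0,\infty}$, together with its $\mathrm{O}_{r-1}(\R)$-invariance, is exactly what allows Cauchy--Schwarz to bridge the two norms and realizes the ``uniform in $\GL_{r-1}(\R)$'' approximate invariance emphasized in the introductory sketch, in contrast to the weaker pointwise-near-identity invariance that sufficed in \cite[Theorem 1]{J20}.
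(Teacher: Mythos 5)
Your overall strategy coincides with the paper's: the same test vector $W_*$ (equal to $W_{0,\infty}$ in the Kirillov model), the same positivity reduction to a single term of \eqref{spectral-weight}, the same Bruhat unfolding with $c\mapsto c/X$, and the same main term $\int|W_{0,\infty}(h)|^2\,dh/|\det h|^{1/2}\cdot\int f_{1/2,\infty}\,dc$. The genuine gap is in the error analysis, precisely at the step you flag as the "principal obstacle." Your Cauchy--Schwarz bound is fine for the term $\int_h W_{0,\infty}(h)\,\overline{[W_*(hk_c)-W_*(h)]}\,dh/|\det h|^{1/2}$, because there the weight $|\det h|^{-1/2}$ sits on the \emph{compactly supported} factor $W_{0,\infty}(h)$ and the other factor is measured exactly in the Kirillov $L^2$-norm, which is $\|\pi_\infty(k_c)W_*-W_*\|_{\pi_\infty}$. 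But the companion term is not "entirely analogous": in your decomposition the difference $W_{0,\infty}(hk_c)-W_{0,\infty}(h)$ is paired against $\overline{W_*(hk_c)}$, and neither factor is compactly supported (right translation by $k_c$ destroys the compact Kirillov support), so the phrase "Taylor estimate on its compact Kirillov support" does not apply. A uniform pointwise bound $\ll|c|/X$ is not available without a polynomially growing factor in $h$ (this is exactly what Lemma \ref{newvector-theorem} supplies, with the factor $|\det(a)|^{-\theta}\delta^{1/2-\eta}(a)$), and even granting it, it does not make the $h$-integral converge. Cauchy--Schwarz fails here as well: with the weight placed so that the $W_*$-factor is measured in the representation norm, the other factor becomes $\bigl(\int|W_{0,\infty}(hk_c)-W_{0,\infty}(h)|^2\,dh/|\det h|\bigr)^{1/2}$, and $\int|W_{0,\infty}(hk_c)|^2\,dh/|\det h|$ diverges (the available bounds, e.g.\ Lemma \ref{bound-for-convergence}, give only polynomial control in the direction where all $a_i\to 0$, which is not enough against the extra $|\det h|^{-1}$); reversing the roles leads to the equally uncontrolled $\int|W_*(hk_c)|^2\,dh/|\det h|$.

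This is exactly why the paper does not argue through the qualitative $L^2$ approximate invariance of \cite[Theorem 1]{JN} at all, but instead proves a quantitative \emph{pointwise} approximation (Lemma \ref{newvector-theorem}) valid for both the variable vector $W$ and the fixed $W_{0,\infty}$, with explicit growth $|\det(a)|^{-\theta}\delta^{1/2-\eta}(a)\,C(\xi)|c|/X$, and then integrates each such error against a factor with genuine decay: against $|W_{0,\infty}(hk_c)|$, whose decay is supplied by Lemma \ref{bound-for-convergence}, or against the compactly supported $|W(h)|=|W_{0,\infty}(h)|$. The convergence of the resulting $h$-integral is where the hypothesis $\vartheta+\vartheta_0<1/2$ is used; your argument never invokes this hypothesis, which is a symptom of the skipped convergence issue. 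To repair your proof you would need a substitute for Lemma \ref{newvector-theorem} (a pointwise, $h$-weighted approximate invariance statement) or some other device giving decay in $h$ for the cross term; the purely $L^2$ newvector property plus compact support of $W_{0,\infty}$ does not suffice.
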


For the rest of this section, to ease notations, we drop the $\infty$-subscript everywhere.

We recall the notations and definition of the Sobolev norm $\mathcal{S}_d$ as in \cite[\S2.3.2]{MV}, \cite[\S3.9]{JN}. Let $\{H\}$ be a basis of $\mathrm{Lie}(G(\R))$. We define a Laplacian on $G(\R)$ by
\begin{equation}\label{laplacian}
    \mathcal{D}:=1-\sum_{H}H^2,
\end{equation}
which is positive definite and self-adjoint on any unitary representation $\xi$ of $G(\R)$.
For any $v\in\xi$ we define the $d$-th Sobolev norm of $v$ by
$$\mathcal{S}_d(v):=\|\mathcal{D}^dv\|_\xi.$$
We refer to \cite[\S2.4]{MV} for a collection of useful properties of the Sobolev norm.

Let $W\in\pi$ be a unit vector such that in the Kirillov model $W$ is given by 
\begin{equation}\label{newvector-choice}
    W\left[\begin{pmatrix}g&\\&1\end{pmatrix}\right]:= W_0\left[\begin{pmatrix}g&\\&1\end{pmatrix}\right].
\end{equation}
Note that such a choice is valid due to the choice of $W_0$ in \S\ref{local-choice}. In fact, $W\in \pi$ is an analytic newvector in the sense of \S\ref{analytic-newvector}.

\begin{lemma}\label{bound-for-convergence}
Let $W_0$ be as in \S\ref{local-choice}. Let $A_{r-1}(\R)\mathrm{O}_{r-1}(\R)\ni h=ak$ as before. If $c\ll 1$ then
$$W_0\left[\begin{pmatrix}h&\\c/X&1\end{pmatrix}\right]\ll_{\eta,\pi_0} |\det(a)|^{-\vartheta_0}\delta^{1/2-\eta}\left[\begin{pmatrix}a&\\&1\end{pmatrix}\right]\min(1,a_{r-1}^{-M})\prod_{i=1}^{r-2}\min(1,(a_i/a_{i+1})^{-M}),$$
for any $\eta>0$.
\end{lemma}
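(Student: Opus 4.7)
The plan is to reduce the bound to the well-known decay estimate for a smooth Whittaker vector on the positive Weyl chamber, by absorbing the perturbation $c/X$ into a bounded element. Writing $h=ak$, I would factor
\[
\begin{pmatrix}h&\\c/X&1\end{pmatrix}=\begin{pmatrix}a&\\&1\end{pmatrix}\omega,\qquad \omega:=\begin{pmatrix}k&\\c/X&1\end{pmatrix}.
\]
Because $|c|\ll 1$ and $X\ge 1$, the element $\omega$ stays in a fixed compact subset $\Omega\subset G(\R)$ as $c$, $k\in\mathrm{O}_{r-1}(\R)$, and $X\ge 1$ vary. Thus it suffices to prove the same bound, uniformly for $\omega\in\Omega$, with $W_0$ replaced by $\pi_0(\omega)W_0$ and evaluated at the diagonal element $\begin{pmatrix}a&\\&1\end{pmatrix}$.

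Next, I would invoke the standard gauge estimate for smooth Whittaker vectors in a generic unitary $\vartheta_0$-tempered representation of $G(\R)$: for every $\eta>0$ and $M\ge 0$ there exists $d=d(M,r)$ such that every smooth $W\in\pi_0$ satisfies
\[
W\!\left[\begin{pmatrix}a&\\&1\end{pmatrix}\right]\ll_{\eta,M,\pi_0}\mathcal{S}_d(W)\,|\det(a)|^{-\vartheta_0}\,\delta^{1/2-\eta}\!\left[\begin{pmatrix}a&\\&1\end{pmatrix}\right]\min(1,a_{r-1}^{-M})\prod_{i=1}^{r-2}\min(1,(a_i/a_{i+1})^{-M}).
\]
The factor $|\det(a)|^{-\vartheta_0}\delta^{1/2-\eta}$ is the classical Jacquet--Shalika gauge arising from the power-series expansion of $W$ near the cusp. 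The decay factors in the simple roots come from repeated integration by parts against $\psi_\infty$: the Whittaker equivariance $W(n_i(t)g)=\psi_\infty(t)W(g)$ together with the adjoint relation $a^{-1}n_i(t)a=n_i(t\cdot a_i/a_{i+1})$ convert smoothness in the direction $E_{i,i+1}$ into a gain of $(a_i/a_{i+1})^{-M}$ whenever $a_i/a_{i+1}\gg 1$.

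Applying this estimate to $W:=\pi_0(\omega)W_0$ and using that $\mathcal{S}_d(\pi_0(\omega)W_0)$ is bounded uniformly for $\omega\in\Omega$ (by continuity of the unitary action and the smoothness of $W_0$ fixed in \S\ref{local-choice}) closes the argument. The only non-trivial input is the gauge estimate itself, which I would quote as a black-box classical fact for smooth Whittaker vectors on $G(\R)$ (see e.g.\ the bounds underlying \cite{JN}); everything else is routine manipulation because $\omega$ is a bounded perturbation. The main place one has to be a little careful is in tracking the $|\det(a)|^{-\vartheta_0}$ factor, which is exactly where the $\vartheta_0$-temperedness of $\pi_{0,\infty}$ (as assumed in Theorem \ref{main-theorem}) enters.
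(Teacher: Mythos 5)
Your opening reduction is exactly the paper's: you write the argument of $W_0$ as $\begin{pmatrix}a&\\&1\end{pmatrix}\omega$ with $\omega=\begin{pmatrix}k&\\c/X&1\end{pmatrix}$ ranging over a fixed compact set, pass to $W_0':=\pi_0(\omega)W_0$, and note that Sobolev norms of $W_0'$ are bounded uniformly in $\omega$; that part is fine. The problem is the next step. The ``standard gauge estimate'' you invoke as a black box --- a majorant $\mathcal{S}_d(W)\,|\det(a)|^{-\vartheta_0}\delta^{1/2-\eta}\min(1,a_{r-1}^{-M})\prod_{i\le r-2}\min(1,(a_i/a_{i+1})^{-M})$ valid for every smooth vector of a merely $\vartheta_0$-tempered generic unitary representation --- is precisely the content of Lemma \ref{bound-for-convergence} after the compactness reduction, and it is not available as a quotable fact in this form. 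The closest statement in the literature, \cite[Lemma 5.2]{JN}, is proved only for \emph{tempered} representations, and the paper says explicitly that the purpose of this lemma is to modify that proof to accommodate the $\vartheta_0$-tempered case. So your proposal assumes the hard part rather than proving it.

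The paper's actual argument for the non-tempered extension is not routine: it first extracts the $\min(1,a_{r-1}^{-M})$ decay by applying a Lie-algebra element $Y$ (acting by multiplication by $a_{r-1}$ in the Kirillov coordinates) $M$ times, then uses Dixmier--Malliavin to write the resulting vector as a finite sum $\sum_i\pi_0(\alpha_i)W_i$, expands $|\det(a)|^{-\sigma}W_2$ for $W_2=\pi_0(\alpha_i)W_i$ by the Whittaker--Plancherel formula on $\GL_{r-1}(\R)$ (where the representations $\pi'$ occurring \emph{are} tempered, so \cite[Lemma 5.2]{JN} may legitimately be applied to the $W'(a)$, yielding the $\delta^{1/2-\eta}$ and the remaining simple-root decay), and then shifts $\sigma$ using the $\GL(r)\times\GL(r-1)$ local functional equation together with the gamma-factor bound \cite[Lemma 3.1]{JN}. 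The admissible range of $\sigma$ is limited by the first pole of $\gamma(1/2-\sigma,\pi_0\otimes\overline{\pi}')^{-1}$, and the choice $\sigma=1/2-\vartheta_0-\eta$ is exactly where the factor $|\det(a)|^{-\vartheta_0}$ and the hypothesis $\vartheta_0<1/2$ enter. Your heuristic justifications (the Jacquet--Shalika gauge or a power-series expansion near the cusp, plus integration by parts against $\psi_\infty$ for the simple-root decay) point in the right direction for the tempered part, but they would naturally produce exponents governed by the individual Langlands exponents of $\pi_{0,\infty}$ in each root direction, and turning them into the stated uniform majorant with Sobolev-norm dependence and arbitrary $M$ is a genuine argument, not a citation. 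To repair the proposal you would either have to supply that argument (essentially reproducing the paper's Whittaker--Plancherel plus functional-equation proof) or locate a reference that proves the gauge bound for $\vartheta$-tempered representations in this uniform form.
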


This lemma is proved in \cite[Lemma 5.2]{JN} for $\pi_0$ being a tempered representation. Here we modify the proof to accommodate the $\vartheta_0$-tempered case.

\begin{proof}
Let $W_0':=\pi_0\left[\begin{pmatrix}k&\\c/X&1\end{pmatrix}\right]W_0$. Note that $k\in \mathrm{O}_{r-1}(\R)$ and $|c|\ll 1$ vary over compact sets. Hence,
it is enough to show that
$$W'_0\left[\begin{pmatrix}a&\\&1\end{pmatrix}\right]\ll_{\eta,M,\pi_0} |\det(a)|^{-\vartheta_0}\delta^{1/2-\eta}\left[\begin{pmatrix}a&\\&1\end{pmatrix}\right]\min(1,a_{r-1}^{-M})\prod_{i=1}^{r-2}\min(1,(a_i/a_{i+1})^{-M}).$$
We take a very similar path as in the proof of \cite[Lemma 5.2]{JN}.

We define $W_1:=d\pi_0(Y^M)(W_0')$, where $Y$ is a Lie algebra element such that
$$d\pi_0(Y)W_0'\left[\begin{pmatrix}a&\\&1\end{pmatrix}\right]=a_{r-1}W_0'\left[\begin{pmatrix}a&\\&1\end{pmatrix}\right].$$
Thus it is enough to prove
\begin{equation}\label{estimate-w1}
    W_1\left[\begin{pmatrix}a&\\&1\end{pmatrix}\right]\ll_{\eta,M,\pi_0} |\det(a)|^{-\vartheta_0}\delta^{1/2-\eta}\left[\begin{pmatrix}a&\\&1\end{pmatrix}\right]\prod_{i=1}^{r-2}\min(1,(a_i/a_{i+1})^{-M}).
\end{equation}
We use the Dixmier--Malliavin Lemma (see \cite{DM}) to find finitely many $\alpha_i\in C_c^\infty(G(\R))$ and $W_i\in\pi_0^\infty$ such that
$$W_1=\sum_i \pi_0(\alpha_i)W_i.$$
Thus to prove \eqref{estimate-w1} it is enough to show \eqref{estimate-w1} with $W_1$ replaced by $\pi_0(\alpha_i)W_i=:W_2$ for each $i$.

Let $\sigma\in \R$. We use the Whittaker--Plancherel formula to expand
\begin{equation}\label{expansion-w2}
    |\det(a)|^{-\sigma}W_2\left[\begin{pmatrix}a&\\&1\end{pmatrix}\right]
    =\int_{\widehat{\GL_{r-1}(\R)}}\sum_{W'\in \B(\pi')}W'(a)Z_{W_2,\sigma}(W')d\mu_\loc(\pi'),
\end{equation}
which is valid for $\sigma$ in some left half plane. Here 
\begin{align*}
&Z_{W_2,\sigma}(W'):=\int_{N_{r-1}(\R)\backslash\GL_{r-1}(\R)}W_2\left[\begin{pmatrix}h&\\&1\end{pmatrix}\right]\overline{W'(h)}|\det(h)|^{-\sigma}dh\\
&=\gamma(1/2-\sigma,\pi_0\otimes\overline{\pi'})^{-1}\omega_{\pi'}(-1)^{r-1}\int_{N_{r-1}(\R)\backslash\GL_{r-1}(\R)}\tilde{W}_2\left[\begin{pmatrix}h&\\&1\end{pmatrix}\right]\overline{\tilde{W}'(h)}|\det(h)|^{\sigma}dh.
\end{align*}
In the last line we have used the $\GL(r)\times \GL(r-1)$ local functional equation. Here $\gamma(.)$ denotes the local gamma factor and $\omega_{\pi'}$ denotes the central character of $\pi'$. Finally, $\tilde{W}$ denotes the contragredient of $W$ defined by $\tilde{W}(g):=W(wg^{-t})$ where $w$ is the long Weyl element of the respective group.

Let $\tilde{\alpha}_i(g):=\alpha_i(g^{-t})$. Let $N^*$ be the unipotent radical of upper triangular matrices attached to the partition $r=(r-1)+1$. Recalling that $W_2=\pi_0(\alpha_i)W_i$ we can write
\begin{align*}
    \tilde{W}_2\left[\begin{pmatrix}h&\\&1\end{pmatrix}\right]
    &=\int_{G(\R)}\tilde{\alpha}_i(g)\tilde{W}_i\left[\begin{pmatrix}h&\\&1\end{pmatrix}g\right]dg\\
    &=\int_{N^*\backslash G(\R)}\tilde{W}_i\left[\begin{pmatrix}h&\\&1\end{pmatrix}g\right]\int_{N^*}\tilde{\alpha}_i(n^*g)\overline{\psi_{e_{r-1}h}(n^*)}dn^* dg,
\end{align*}
where $e_{r-1}$ is the row vector $(0,\dots,0,1)$.
Then we have that
\begin{multline*}
Z_{W_2,\sigma}(W')=\gamma(1/2-\sigma,\pi_0\otimes\overline{\pi'})^{-1}\omega_{\pi'}(-1)^{r-1}\\
\int_{N^*\backslash G(\R)}\int_{N_{r-1}(\R)\backslash\GL_{r-1}(\R)}\tilde{W}_i\left[\begin{pmatrix}h&\\&1\end{pmatrix}g\right]\int_{N^*}\tilde{\alpha}_i(n^*g)\overline{\psi_{e_{r-1}h}(n^*)}dn^* \overline{\tilde{W}'(h)}|\det(h)|^{\sigma}dh dg.
\end{multline*}
We choose an orthonormal basis $\B(\pi')$ consisting of eigenfunctions of the Laplacian $\D'$ on $\GL_{r-1}(\R)$, as defined in \eqref{laplacian}, and integrate by parts the $h$-integral $L$ times with respect to $\D'$. We note that $W'\otimes|\det|^\sigma$ is also an eigenfunction of $\D'$.
We recall a bound of the gamma factor from \cite[Lemma 3.1]{JN}:
$$\gamma(1/2-\sigma,\pi_0\otimes\overline{\pi'})\ll_{\sigma,\pi_0} C(\pi')^{r\sigma}.$$
We apply the Cauchy--Schwarz on the above $h$ integral. Then we use the above bound of the gamma factor and unitarity of $\pi_0$ to obtain that
\begin{multline*}
    Z_{W_2,\sigma}(W')\ll C(\pi')^{r\sigma}\lambda_{\tilde{W}'}^{-L}\int_{N^*\backslash G(\R)}\left(\int_{N_{r-1}(\R)\backslash\GL_{r-1}(\R)}\left|{\D'}^{L}\left(\int_{N^*}\alpha_i(n^*g)\overline{\psi_{e_{r-1}h}(n^*)}dn^*\right)\right.\right.\\ \left.\left.\overline{\tilde{W}'(h)}|\det(h)|^{\sigma}\right|^2dh\right)^{1/2}dg,
\end{multline*}
where $\lambda_{\tilde{W}'}$ is the $\D'$-eigenvalue of $\tilde{W}'$.
The above $N^*$-integral gives rise to a Schwartz function in $e_{r-1}h$, which can be seen integrating by parts several times in the $N^*$-integral. Thus
$${\D'}^{L}\left(\int_{N^*}\alpha_i(n^*g)\overline{\psi_{e_{r-1}h}(n^*)}dn^*\right)\ll\min(1,|e_{r-1}h|^{-N}).$$
Noting that $g$ varies over a compact set in $G(\R)$ modulo $N^*$ we obtain that
$$Z_{W_2,\sigma}(W')\ll C(\pi')^{r\sigma}\lambda_{\tilde{W}'}^{-L}\left(\int_{N_{r-1}(\R)\backslash\GL_{r-1}(\R)}\min(1,|e_{r-1}h|^{-N}) |{\tilde{W}'(h)}|^2|\det(h)|^{2\sigma}dh\right)^{1/2}.$$
We use \cite[Lemma 5.2]{JN} on $\tilde{W}'$ (which is in the tempered representation $\tilde{\pi}'$) to check that the above integral is absolutely convergent for any $\sigma>0$. In particular, from the location of the first pole of $\gamma(1/2-\sigma,\pi_0\otimes\overline{\pi'})^{-1}$ we may conclude that one can choose $\sigma$ in $(0,1/2-\vartheta_0)$ in the definition of $Z_{W_2,\sigma}(W')$.

Again we use \cite[Lemma 5.2]{JN} to estimate $W'(a)$ in \eqref{expansion-w2} by
$$\ll \delta^{1/2-\eta}(a)\prod_{i=1}^{r-2}\min(1,(a_i/a_{i+1})^{-M})\lambda_{W'}^d,$$
where $d$ only depends on $M$. We choose $\sigma=1/2-\vartheta_0-\eta$ to obtain that
\begin{multline*}
W_2\left[\begin{pmatrix}a&\\&1\end{pmatrix}\right]\ll_{\eta,M,\pi_0} |\det(a)|^{-\vartheta_0}\delta^{1/2-\eta}\left[\begin{pmatrix}a&\\&1\end{pmatrix}\right]\\
\prod_{i=1}^{r-2}\min(1,(a_i/a_{i+1})^{-M})\int_{\widehat{\GL_{r-1}(\R)}}C(\pi')^{r\sigma}\sum_{W'\in \B(\pi')}\lambda_{W'}^{d-L}d\mu_\loc(\pi').
\end{multline*}
We make $L$ sufficiently large and invoke \cite[Lemma 3.3]{JN} to conclude that the above sum and integral are absolutely convergent.
\end{proof}

\begin{lemma}\label{newvector-theorem}
Let $W$ be as in \eqref{newvector-choice} and $W_0$ be as in \S\ref{local-choice}. Let $V$ be $W$ or $W_0$ and $\xi$ be $\pi$ or $\pi_{0}$, respectively. Also let $A_{r-1}(\R)\mathrm{O}_{r-1}(\R)\ni h=ak$ where $a=\diag(a_1,\dots,a_{r-1})$, and $|c|\ll 1$. Then for any sufficiently small $\eta>0$
$$V\left[\begin{pmatrix}h&\\c/X&1\end{pmatrix}\right]-V\left[\begin{pmatrix}h&\\&1\end{pmatrix}\right]\ll_{\eta} |\det(a)|^{-\theta}\delta^{1/2-\eta}\left[\begin{pmatrix}a&\\&1\end{pmatrix}\right]
\frac{C(\xi)|c|}{X}.$$
Here $\theta$ is $\vartheta$ or $\vartheta_0$ depending on whether $\xi$ is $\pi$ or $\pi_0$, respectively.
\end{lemma}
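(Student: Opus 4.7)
The plan is to express the difference as the failure of $V$ to be invariant under an element of a small approximate congruence subgroup, and then to exploit a Lipschitz refinement of the analytic newvector property of \cite{JN}. Factoring
$$\begin{pmatrix}h&\\c/X&1\end{pmatrix}=\begin{pmatrix}h&\\&1\end{pmatrix}\begin{pmatrix}I_{r-1}&\\c/X&1\end{pmatrix}$$
and writing $g_0:=\begin{pmatrix}I_{r-1}&\\c/X&1\end{pmatrix}$, the desired difference equals $(\xi(g_0)V-V)[\diag(h,1)]$. If $C(\xi)|c|/X$ is bounded below by an absolute constant, the claim follows at once from the triangle inequality applied to Lemma \ref{bound-for-convergence} (since the $\min$-factors there are $\le 1$). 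So we may restrict to the regime $C(\xi)|c|/X<\tau$ for the small fixed $\tau>0$ of \S\ref{analytic-newvector}, in which case $g_0\in K_0(C(\xi),\tau)$.

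Next, I would parametrize $g_0=\exp(Y_0)$ with
$$Y_0:=\begin{pmatrix}0&0\\c/X&0\end{pmatrix}=\tfrac{1}{X}\sum_{i=1}^{r-1}c_i E_{r,i},$$
where $E_{r,i}$ denotes the matrix with a $1$ at the $(r,i)$-entry and zeros elsewhere, and apply the fundamental theorem of calculus to obtain
$$(\xi(g_0)V-V)[\diag(h,1)]=\tfrac{1}{X}\sum_{i=1}^{r-1}c_i\int_0^1 (d\xi(E_{r,i})V)\bigl[\diag(h,1)\exp(sY_0)\bigr]\,ds.$$
For every $s\in[0,1]$ the element $\exp(sY_0)$ lies in $K_0(C(\xi),\tau)$, hence in a fixed compact neighbourhood of the identity in $G(\R)$. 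Thus the displaced evaluation point differs from $\diag(h,1)$ only by an element of that compact set, and the problem reduces to producing a pointwise bound of the form
$$\bigl|(d\xi(E_{r,i})V)[\diag(h,1)k']\bigr|\ll C(\xi)\,|\det(a)|^{-\theta}\delta^{1/2-\eta}\bigl[\diag(a,1)\bigr],$$
uniform for $k'$ in the compact set.

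The main obstacle, and the heart of the argument, is this derivative estimate. Heuristically, $V$ is approximately invariant under translations of size $\sim 1/C(\xi)$ in the $E_{r,i}$ direction, so a derivative there should cost precisely a factor of $C(\xi)$. To prove it rigorously, I would mirror the Whittaker--Plancherel argument of Lemma \ref{bound-for-convergence} (equivalently of \cite[Lemma 5.2]{JN}): absorb the translate $k'$ into the vector by setting $V':=\xi(k')d\xi(E_{r,i})V$, apply the Dixmier--Malliavin lemma to write $V'=\sum_j \xi(\alpha_j)W'_j$ with $\alpha_j\in C_c^\infty(G(\R))$ and $W'_j\in\xi^\infty$, expand each summand by the $\GL(r)\times\GL(r-1)$ local functional equation, and track how the additional Lie-algebra element $E_{r,i}$ forces a shift in the $\sigma$-parameter that contributes the factor $C(\xi)$ through the gamma-factor bound $|\gamma(1/2-\sigma,\xi\otimes\overline{\pi'})|\ll_\sigma C(\pi')^{r\sigma}$ from \cite[Lemma 3.1]{JN}. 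Combining this derivative bound with the preceding steps and summing $(1/X)\sum_i|c_i|$ then yields the factor $C(\xi)|c|/X$ in the statement.
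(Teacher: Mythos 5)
There is a genuine gap, and it sits exactly at what you call "the heart of the argument." Your reduction by the fundamental theorem of calculus is fine as algebra, but it defers the entire content of the lemma to the derivative bound $\bigl|(d\xi(E_{r,i})V)[\mathrm{diag}(h,1)k']\bigr|\ll C(\xi)\,|\det(a)|^{-\theta}\delta^{1/2-\eta}$ \emph{uniformly in $\xi=\pi$ over the family}, and the mechanism you propose for it does not work as described. First, applying Dixmier--Malliavin to $V':=\xi(k')d\xi(E_{r,i})V$ produces data $\alpha_j, W_j'$ depending on the varying representation $\pi$, with no control on the resulting constants uniformly in $\pi$; the paper can afford Dixmier--Malliavin only in Lemma \ref{bound-for-convergence}, which concerns the \emph{fixed} vector $W_0\in\pi_0$. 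Moreover, the feature that makes the newvector argument uniform --- that the Kirillov restriction of $V$ is a fixed, $\pi$-independent bump function --- is destroyed by $d\xi(E_{r,i})$, since $E_{r,i}$ is lower-triangular and not in the Lie algebra of the mirabolic, so $d\pi(E_{r,i})W$ restricted to $\mathrm{diag}(h,1)$ depends on $\pi$ in an inexplicit way. Second, the claimed source of the single factor $C(\xi)$ --- ``a shift in the $\sigma$-parameter contributing $C(\xi)$ through $\gamma(1/2-\sigma,\xi\otimes\overline{\pi'})\ll C(\pi')^{r\sigma}$'' --- is not a real mechanism: that bound controls powers of the conductor of the auxiliary dual representation $\pi'$, not of $\xi$, and differentiating by $E_{r,i}$ does not shift $\sigma$ in the functional equation. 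In effect your derivative bound is equivalent to the lemma itself, and no proof of it is supplied. (A lesser instance of the same uniformity issue appears already in your ``trivial regime'' step: Lemma \ref{bound-for-convergence} is stated and proved only for $W_0$, with constants depending on $\pi_0$, so it cannot be quoted for $V=W$ at the translated point without redoing the newvector analysis.)

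For comparison, the paper (following \cite[Proposition 4.1]{JN}) never differentiates: it expands the \emph{difference} $t\mapsto V[\mathrm{diag}(t,1)g_0]-V[\mathrm{diag}(t,1)]$ by the Whittaker--Plancherel formula on $\GL_{r-1}(\R)$ and applies the $\GL(r)\times\GL(r-1)$ local functional equation, after which the lower-triangular translation by $c/X$ appears on the dual side as the explicit factor $e(cw't^{-1}e_1C(\xi)/X)-1$, which is trivially $\ll C(\xi)|c|/X$ times a weight in $t$; the conductor enters through the normalization $V[\mathrm{diag}(C(\xi),t)w]$ and the factors $C(\xi)^{(r-1)\sigma}$, $\omega_{\overline{\pi'}}(C(\xi)^{-1})$ coming from the functional-equation bookkeeping, uniformity over the family comes from the tempered bounds of \cite[Lemma 5.2]{JN} applied to the dual variable $\pi'$, and the choice $\sigma=1/2-\theta-\eta$ produces exactly the exponents $|\det(a)|^{-\theta}\delta^{1/2-\eta}$ in the statement. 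If you want to salvage your outline, the honest route is to prove your derivative bound by precisely this dual-side computation --- at which point the FTC step is an unnecessary detour rather than a simplification.
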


This is essentially the main result of analytic newvectors, proved in \cite[Proposition 4.1]{JN}, but in a more quantitative form. We need to only modify the proof of \cite[Proposition 4.1]{JN} and we describe that here.

\begin{proof}
Let $\sigma\in \R$ be in some left half plane. As in the proof of \cite[Proposition 4.1]{JN} we write the difference in the lemma 
as 
\begin{multline*}
    \int_{\widehat{\GL_{r-1}(\R)}}\omega_{\bar{\pi'}}((-1)^{r-1}C(\xi)^{-1})C(\xi)^{(r-1)\sigma}\gamma(1/2-\sigma,\xi\otimes\bar{\pi}')^{-1}\sum_{W'\in\B(\pi')}W'(h)|\det(h)|^{\sigma} \\
    \int_{N_{r-1}(\R)\backslash \GL_{r-1}(\R)}(e(cw't^{-1}e_1C(\xi)/X)-1)V\left[\begin{pmatrix}C(\xi)&\\&t\end{pmatrix}w\right]\overline{W'(tw')}|\det(t)|^{-\sigma} dtd\mu_\loc(\pi'),
\end{multline*}
where $w'$ is the long Weyl element of $\GL(r-1)$.
Note that $\pi'$ is tempered. We now use \cite[Lemma 5.2]{JN} for tempered representations to estimate
$$W'(h)\ll \delta^{1/2-\eta}(a)
S_{d}(W'),$$
for any $\eta>0$ and some $d>0$.
We choose $\sigma=1/2-\theta-\eta$ (which is admissible) and proceed as in the proof of \cite[Proposition 4.1]{JN} to conclude.
\end{proof}

\begin{proof}[Proof of Proposition \ref{main-prop-spectral-side}]
Recall the definition of $J_X$ from \eqref{normalized-spectral-weight} and \eqref{spectral-weight}. In the expression of \eqref{spectral-weight} we choose a basis $\B(\pi)$ containing an analytic newvector $W$ as in \eqref{newvector-choice}.
To show the required lower bound of $J_X$ it is enough to drop all but the term containing $W$ from the sum in \eqref{spectral-weight} by positivity and show that
$$X^{r-1}|\Psi(f_{1/2}(.x),W_0,\overline{W})|^2\gg 1,$$
if $C(\pi)<X$.

First, using \eqref{relation-f-phi} and the choices of the local components as in \S\ref{local-choice} we get
$$f_{1/2}\left[\begin{pmatrix}\mathrm{I}_{r-1}&\\c&1\end{pmatrix}\right]=\int_{\R^\times}\Phi(t(c,1))|t|^{r/2}d^\times t\ge 0.$$
The support condition of $\Phi$ in \S\ref{local-choice} implies that the above vanishes unless $|c|< \tau$. We use Bruhat coordinates and make the change of variables to write $X^{\frac{r-1}{2}}\Psi(f_{1/2}(.x),W_0,\overline{W})$ as
$$\int_{N_{r-1}(\R)\backslash\GL_{r-1}(\R)}\int_{\R^{r-1}}W_0\left[\begin{pmatrix}h&\\c/X&1\end{pmatrix}\right]\overline{W\left[\begin{pmatrix}h&\\c/X&1\end{pmatrix}\right]}f_{1/2}\left[\begin{pmatrix}\mathrm{I}_{r-1}&\\c&1\end{pmatrix}\right]dc \frac{dh}{|\det(h)|^{1/2}}.$$
We use Lemma \ref{newvector-theorem} for $W$, noting that $|c|<\tau$ and $C(\pi)<X$, to obtain the above integral is
\begin{multline}\label{using-coordinates}
    \int_{N_{r-1}(\R)\backslash\GL_{r-1}(\R)}\overline{W\left[\begin{pmatrix}h&\\&1\end{pmatrix}\right]}\int_{\R^{r-1}}W_0\left[\begin{pmatrix}h&\\c/X&1\end{pmatrix}\right]f_{1/2}\left[\begin{pmatrix}\mathrm{I}_{r-1}&\\c&1\end{pmatrix}\right]dc \frac{dh}{|\det(h)|^{1/2}}\\
    + O_\eta\left(\tau\int_{N_{r-1}(\R)\backslash \GL_{r-1}(\R)}\delta^{1/2-\eta}\left[\begin{pmatrix}h&\\&1\end{pmatrix}\right]\int_{\R^{r-1}}\left|W_0\left[\begin{pmatrix}h&\\c/X&1\end{pmatrix}\right]f_{1/2}\left[\begin{pmatrix}\mathrm{I}_{r-1}&\\c&1\end{pmatrix}\right]\right|dc \frac{dh}{|\det(h)|^{1/2+\vartheta}}\right).
\end{multline}
We use Lemma \ref{newvector-theorem} for $W_0$ and the definition of $W$ in the Kirillov model as in \eqref{newvector-choice} to obtain that the main term of \eqref{using-coordinates} is equal to
\begin{multline}\label{main-term-using-coordinates}
    \int_{N_{r-1}(\R)\backslash\GL_{r-1}(\R)}|W_0|^2\left[\begin{pmatrix}h&\\&1\end{pmatrix}\right]\frac{dh}{|\det(h)|^{1/2}}\int_{\R^{r-1}}f_{1/2}\left[\begin{pmatrix}\mathrm{I}_{r-1}&\\c&1\end{pmatrix}\right]dc\\
    +O_{\pi_0}\left(\frac{1}{X}\int_{N_{r-1}(\R)\backslash\GL_{r-1}(\R)}|W_0|\left[\begin{pmatrix}h&\\&1\end{pmatrix}\right]\delta^{1/2-\eta}\left[\begin{pmatrix}h&\\&1\end{pmatrix}\right]\frac{dh}{|\det(h)|^{1/2+\vartheta_0}}\right.
    \\\left.\int_{\R^{r-1}}|f_{1/2}|\left[\begin{pmatrix}\mathrm{I}_{r-1}&\\c&1\end{pmatrix}\right]dc\right).
\end{multline}
From the choice of $\Phi$ in \S\ref{local-choice} we obtain that 
$$0\le\int_{\R^{r-1}}f_{1/2}\left[\begin{pmatrix}\mathrm{I}_{r-1}&\\c&1\end{pmatrix}\right]dc\asymp 1.$$
Also, the choice of $W_0$ in \S\ref{local-choice} ensures that
$$\int_{N_{r-1}(\R)\backslash\GL_{r-1}(\R)}|W_0|^2\left[\begin{pmatrix}h&\\&1\end{pmatrix}\right]\frac{dh}{|\det(h)|^{1/2}}\asymp_{\pi_0} 1.$$
So the main term of \eqref{main-term-using-coordinates} is
$$\asymp_{\pi_0}\int_{\R^{r-1}}f_{1/2}\left[\begin{pmatrix}\mathrm{I}_{r-1}&\\c&1\end{pmatrix}\right]dc.$$
On the other hand, the error term in \eqref{main-term-using-coordinates} is trivially $\ll_{\pi_0,\tau}X^{-1}$, which follows from the support condition of $W_0$ as in \S\ref{local-choice}. In total we obtain that \eqref{main-term-using-coordinates}, which is the main term of \eqref{using-coordinates}, is
$$\asymp_{\pi_0}\int_{\R^{r-1}}f_{1/2}\left[\begin{pmatrix}\mathrm{I}_{r-1}&\\c&1\end{pmatrix}\right]dc +O_{\pi_0,\tau}(1/X).$$
Now we focus on the error term of \eqref{using-coordinates}. We use Iwasawa coordinates in the integral and use Lemma \ref{bound-for-convergence} to estimate the error term by
\begin{multline*}
    \ll_{M} \tau\int_{\R^{r-1}}f_{1/2}\left[\begin{pmatrix}\mathrm{I}_{r-1}&\\c&1\end{pmatrix}\right]dc\int_{A_{r-1}(\R)}|\det(a)|^{1/2-\vartheta-\vartheta_0}\delta^{-2\eta}\left[\begin{pmatrix}h&\\&1\end{pmatrix}\right]\\
    \min(1,a_{r-1}^{-M})\prod_{i=1}^{r-2}\min(1,(a_i/a_{i+1})^{-M}){d^\times a}.
\end{multline*}
We recall the assumption that $\vartheta+\vartheta_0<1/2$. Hence, the inner integral is convergent for sufficiently small $\eta$ and large enough $M$. Thus we obtain \eqref{using-coordinates} is $$\asymp_{\pi_0} (1+\tau O_{\pi_0}(1))\int_{\R^{r-1}}f_{1/2}\left[\begin{pmatrix}\mathrm{I}_{r-1}&\\c&1\end{pmatrix}\right]dc+O_{\pi_0,\tau}(1/X).$$
We conclude that the above is $\gg 1$ by making $\tau$ sufficiently small but fixed.
\end{proof}

\end{document}